\documentclass{siamart}

% use Times
%\usepackage{times}
% For figures
\usepackage{graphicx} % more modern
\usepackage{subfigure} 

% For citations
\PassOptionsToPackage{numbers, compress}{natbib}
\usepackage{algorithm}
\usepackage{algorithmic}

\usepackage{hyperref}

\usepackage{url}
\usepackage[colorinlistoftodos,prependcaption,textsize=tiny]{todonotes}
\usepackage{amsmath,amssymb}%, amsthm}

\usepackage{multirow}
\usepackage{amsfonts}
\usepackage{layout}
\usepackage{amsmath}
\usepackage{amssymb}
\usepackage{url}
\usepackage{color}
\usepackage{graphicx}
\usepackage{algorithmic}
\usepackage{algorithm}
\usepackage{verbatim}
\usepackage{xcolor}

\PassOptionsToPackage{normalem}{ulem}
\usepackage{ulem}

\usepackage{cases}

\providecolor{added}{rgb}{0,0,1}
\providecolor{deleted}{rgb}{1,0,0}
%% Change tracking with ulem

\definecolor{orange}{RGB}{255,127,0}

\newcommand\tagthis{\addtocounter{equation}{1}\tag{\theequation}}

\newcommand{\ve}[2]{\langle #1 ,  #2 \rangle}

\newcommand{\R}{\mathbb{R}}

   % norm block primal
 % norm block dual

%\newcommand{\TLi}{{\color{red}T_L^{(i)}}}

% sets
       % cardinality of a set
       % diameter of a set
       % minim volume enclosing ellipsoid of a set
         % volume of a set

% statistical
\DeclareMathOperator{\Exp}{\mathbf{E}}           % expectation
         % covariance
         % variance
       % correlation

% functions and operators
     % signum/sign of a scalar
\DeclareMathOperator{\dom}{dom}         % domain
         % epigraph
        % nullspace/kernel
  % nullpsace
     % range
        % image
        % argmin

% topology
    % interior
         % relative interior
       % relative interior
       % boundary
           % closure

% vectors, matrices

           % trace
       % rank
       % convex hull
       % Diag(v) = diagonal matrix with v_i on the diagonal
       % diag(D) = the diagonal vector of matrix D

         % Argument

%\renewcommand{\qedsymbol}{\ding{114}}

\newtheorem{assumption}{Assumption}

\newcommand*{\starnr}{\stepcounter{equation}\tag{\theequation}}
\makeatletter

\makeatother
\usepackage{enumitem}
% Employ this version of the ``usepackage'' statement after the paper has
% been accepted, when creating the final version.  This will set the
% note in the first column to ``Proceedings of the...''

\newcommand{\G}{\mathcal{G}}
\newcommand{\vv}[2]{#1^{[#2]}}
\newcommand{\vf}[3]{#1^{[#2]}_{#3}}
\newcommand{\cR}{\mathcal{R}}
\newcommand{\vb}[2]{ {#1}^{[#2]}  }
\newcommand{\oneK}{1}

 % cocoa primal dual
\newcommand{\alphav}{ {\boldsymbol \alpha}}

\newcommand{\bv}{ {\bf b}}

\newcommand{\uv}{ {\bf u}}
\newcommand{\vvv}{ {\bf v}}
\newcommand{\wv}{ {\bf w}}

\newcommand{\yv}{ {\bf y}}
\newcommand{\zv}{ {\bf z}}
\newcommand{\OA}{\mathcal{O}_{\hspace{-1pt}A}}
\newcommand{\OB}{\mathcal{O}_{\hspace{-1pt}B}}

\newcommand{\ConstantA}{C_1}
\newcommand{\ConstantB}{C_2}

\graphicspath{{./}{./exp/}}%{../exp/}	

%\usepackage[colorinlistoftodos,bordercolor=orange,backgroundcolor=orange!20,linecolor=orange,textsize=scriptsize]{todonotes}

% if you need to pass options to natbib, use, e.g.:
% \PassOptionsToPackage{numbers, compress}{natbib}
% before loading nips_2017
%
% to avoid loading the natbib package, add option nonatbib:
% \usepackage[nonatbib]{nips_2017}

%\usepackage{nips_2017}

% to compile a camera-ready version, add the [final] option, e.g.:
% \usepackage[final]{nips_2017}

\usepackage[utf8]{inputenc} % allow utf-8 input
\usepackage[T1]{fontenc}    % use 8-bit T1 fonts
\usepackage{hyperref}       % hyperlinks
\usepackage{url}            % simple URL typesetting
\usepackage{booktabs}       % professional-quality tables
\usepackage{amsfonts}       % blackboard math symbols
\usepackage{nicefrac}       % compact symbols for 1/2, etc.
\usepackage{microtype}      % microtypography
\allowdisplaybreaks

%%smaller space above new paragraphs
%\makeatletter
%\renewcommand{\paragraph}{%
%  \@startsection{paragraph}{4}%
%  {\z@}{0.4ex \@plus 0.3ex \@minus 0.2ex}{-1em}%
%  {\normalfont\normalsize\bfseries}%
%}
%\makeatother

\title{An Accelerated Communication-Efficient Primal-Dual Optimization Framework for Structured Machine Learning\footnotemark[1]}
%\title{Accelerated CoCoA\footnotemark[1]}

% The \author macro works with any number of authors. There are two
% commands used to separate the names and addresses of multiple
% authors: \And and \AND.
%
% Using \And between authors leaves it to LaTeX to determine where to
% break the lines. Using \AND forces a line break at that point. So,
% if LaTeX puts 3 of 4 authors names on the first line, and the last
% on the second line, try using \AND instead of \And before the third
% author name.
%\author{
% Chenxin Ma \\
%Lehigh University \\
%Bethlehem, PA \\
%\texttt{chm514@lehigh.edu} \\
%\And
%Martin Jaggi\\
%EPFL\\
%Lausanne, Switzerland\\
%\texttt{martin.jaggi@epfl.ch} \\
%\And
% Frank E. Curtis \\
%Lehigh University \\
%Bethlehem, PA \\
%\texttt{fec309@lehigh.edu} \\
%\And 
% Nathan Srebro   \\
% TTIC at Chicago\\
% Chicago, IL
% \\
% \texttt{nati@ttic.edu}
%\And 
% Martin Tak\'{a}\v{c} \\
%Lehigh University \\
%Bethlehem, PA \\
%\texttt{takac@lehigh.edu} \\
  %% \And
  %% Coauthor \\
  %% Affiliation \\
  %% Address \\
  %% \texttt{email} \\
  %% \And
  %% Coauthor \\
  %% Affiliation \\
  %% Address \\
  %% \texttt{email} \\
%}

\author{Chenxin Ma\footnotemark[2]
   \and Martin Jaggi\footnotemark[3]
   \and Frank E.~Curtis\footnotemark[2]
   \and Nathan Srebro\footnotemark[4]
   \and Martin Tak\'a\v{c}\footnotemark[2]}
\date{\today}

\begin{document}
% \nipsfinalcopy is no longer used

\maketitle

\renewcommand{\thefootnote}{\fnsymbol{footnote}}
\footnotetext[1]{This material is based upon work supported by the U.S. National Science Foundation, Division of Computing and Communications Foundations, under Award Number CCF-1618717.}
\footnotetext[2]{Department of Industrial and Systems Engineering, Lehigh University, Bethlehem, PA, USA; \email{chm514@lehigh.edu}, \email{frank.e.curtis@gmail.com}, \email{takac@lehigh.edu}}
\footnotetext[3]{School of Computer and Communication Sciences, \'Ecole Polytechnique F\'ed\'erale de Lausanne, Lausanne, Switzerland; \email{martin.jaggi@epfl.ch}}
\footnotetext[4]{Toyota Technological Institute at Chicago, Chicago, IL; \email{nati@ttic.edu}}
\renewcommand{\thefootnote}{\arabic{footnote}}

\begin{abstract}
Distributed optimization algorithms are essential for training machine learning models on very large-scale datasets.  However, they often suffer from communication bottlenecks.  Confronting this issue, a communication-efficient primal-dual coordinate ascent framework (\textsc{CoCoA}) and its improved variant \textsc{CoCoA+} have been proposed, achieving a convergence rate of $\mathcal{O}(1/t)$ for solving empirical risk minimization problems with Lipschitz continuous losses.  In this paper, an accelerated variant of \textsc{CoCoA+} is proposed and shown to possess a convergence rate of $\mathcal{O}(1/t^2)$ in terms of reducing suboptimality.  The analysis of this rate is also notable in that the convergence rate bounds involve constants that, except in extreme cases, are significantly reduced compared to those previously provided for \textsc{CoCoA+}.  The results of numerical experiments are provided to show that acceleration can lead to significant performance gains.
\end{abstract}

%*********
% Section
%*********
\section{Introduction}

The use of \emph{distributed} optimization has become essential for training machine learning models on very large datasets.  For example, distribution is a key ingredient when the dataset does not fit into the memory of a single machine, and rather must be stored in a distributed manner over many machines or computational agents, each having access only to their own (local) segment of the training data.

Efficiently training a machine learning model in such a network setting is challenging due to the cost of communicating information between machines, as compared to the relatively cheap cost of local computation on a single machine.  Therefore, designing efficient distributed optimization algorithms, which are able to balance the amount of local computation with the amount of necessary communication between machines, is of crucial importance.  This is especially the case since communication bottlenecks and the heterogeneity of large-scale computing systems have been increasing.

%Used for the optimization of parameters defining a prediction function aimed to achieve minimal loss from inaccurate predictions, optimization problems in machine learning often take the form of a regularized empirical loss function (i.e., a sum over all elements of a dataset) to be minimized over these parameters.  A major challenge in this process is that, as the sizes of datasets of interest continue to grow, it has become extremely difficult in many applications to process or even store an entire dataset on a single machine.  Consequently, it has become increasingly important to design efficient \emph{distributed optimization} methods that are to be employed when a dataset is split and stored in a large network of machines.

%A distributed optimization paradigm involves local computation on individual machines as well as communication of numerical quantities over the network.  Due to the fact that such communication is slow compared to the computations that can be performed on each machine, the optimization method must carefully balance the work performed within and between each node in order to overcome communication bottlenecks.  

In this paper, we improve the existing \textsc{CoCoA+} framework for communication-efficient distributed optimization \cite{jaggi2014communication,ma2015adding,Smith:2016wp} by equipping it with \emph{acceleration}~\cite{nesterov2013introductory}.  We do not simply apply a generic acceleration scheme, e.g., using the Universal Catalyst~\cite{lin2015universal} (see \S\ref{sec.literature} for more discussion), but rather develop a technique specific for our setting.  We prove that our accelerated scheme possesses an improved convergence rate involving reduced constants compared to that of \textsc{CoCoA+}. Our theory holds for arbitrary local solvers used on the worker machines.  We also provide the results of extensive numerical experiments, where we demonstrate that the new accelerated scheme leads to significantly improved performance in real application settings.

%************
% Subsection
%************
\subsection{Preliminaries}

We present our algorithmic framework in the context of solving the following pair of optimization problems, which are dual to each other:
\begin{align}
    \label{eq:A}\tag{A}  
    \min_{\alphav \in \R^{n}} \ & \Big[ \ 
    \OA(\alphav) := f(A\alphav )
    \, +\, \textstyle\sum_{i\in\mathcal{I}} g_i(\alpha_i) %g(\alphav)
     \Big]\quad \text{and}\,
\\
    \label{eq:B}\tag{B} 
    \min_{\wv \in \R^{d}} \ & \Big[ \ 
    \OB(\wv) := f^*(\wv )
    \, +\, \textstyle\sum_{i\in\mathcal{I}} g_i^*(-A_i^\top\wv)  %g^*(-A^\top\wv) 
      \Big].
\end{align}
Here, $h^*$ denotes the \textit{convex conjugate} of a function~$h$.  The two problems are defined by  the functions $f : \R^d \rightarrow \R$ and $g_i : \R \rightarrow \R$ for all $i \in \mathcal{I}$ and a data matrix $A\in\R^{d\times n}$, about which we make the following assumption.

\begin{assumption}\label{assumption.main}
  The functions $f$ and $g_i$ for all $i \in \mathcal{I}$ are closed and convex.  In addition, the gradient function $\nabla f$ is Lipschitz continuous with constant $\lambda$ and the function $g_i^*$ for each $i \in \mathcal{I}$ is Lipschitz continuous with constant $L$.  Finally, without loss of generality, the columns of the data matrix satisfy $\|A_i\| \leq 1$.
\end{assumption}
% functions, which one crucial assumption on each:
%For $f$, we assume $(1/\lambda)$-smoothness (Lipschitz gradient).
%We require separability, i.e., $g(\alphav) = \sum_i g_i(\alpha_i)$, as well as 
%either smoothness, or 
%For each~$g_i$, we require $L$-bounded support (equivalent to $g_i^*$ all being $L$-Lipschitz continuous).

%Without loss of generality, we assume that the data columns satisfy $\|A_i\| \leq 1$. %was: $\|x_i\| \leq 1$. 

Observe that problem formulation~\eqref{eq:A} includes many important problems with nonsmooth regularizers, such as Lasso with $\ell_1$-norm regularization, i.e.,
\begin{align}\label{eq:L1}%\tag{L1}
    \min_{\alphav \in \R^{n}} \ & %\Big[ \ \ 
    %\OA(\alphav) := 
    \tfrac{1}{2}  \left\|A \alphav-\bv\right\|_2^2  
    + \lambda_1 \|\alphav\|_1.% \Big].
\end{align}
%FEC: This sentence below doesn't make sense to me and is inconsistent with the way that A is defined, so I have erased it
%Here, the columns of the data matrix $A$ represent $n$ feature vectors while $\bv\in\R^d$ is a target vector.
More generally, many other smooth data-fitting functions can be used in place of $f$ in our context of problem~\eqref{eq:A}.  Problem formulation \eqref{eq:B} includes many regularized empirical loss minimization problems with strongly convex regularizers of the form\vspace{-2mm}
\begin{align}\label{eq:ERM}%\tag{ERM}
    \min_{\wv \in \R^{d}} \ & \tfrac1n \sum_{i=1}^n \ell_i( A_i^\top \wv) 
 + \tfrac{\lambda}{2} \|\wv\|^2,
\end{align}
such as support vector machines (SVMs) or logistic regression.  Here, the $i$-th column $A_i \in \R^d$ of $A$ represents the $i$-th data example and $\lambda>0$ is a regularization parameter.
\subsection{Motivation and Literature Review}\label{sec.literature}

The increased size of optimization problems of interest in machine learning, as well as the availability of parallel and distributed architectures, has led to various directions of research on the design of parallel algorithms for either solving \eqref{eq:L1} directly (e.g., see \cite{bradley2011parallel,recht2011hogwild,duchi2011adaptive}) or indirectly by solving its dual \eqref{eq:ERM} (e.g., see
\cite{takavc2013mini,liu2015asynchronous,takavc2015distributed}).  For example, some previous work has focused on shared memory systems in which the amount of parallelism possible is dependent upon the number of processing units (CPUs) on a single node, which is usually in the range of 16--64 cores per node in a contemporary high performance cluster.  To avoid locking and, hence, improve speed, some \emph{asynchronous} algorithms have been proposed and analyzed 
\cite{liu2015asynchronous,recht2011hogwild}.  However, despite the fact that such methods are efficient with shared memory, na\"ive extensions of these ideas to distributed environments can be terribly inefficient.  This is due to the fact that such na\"ive extensions require constant communication between nodes, causing them to suffer large overhead.

This type of observation has led many to conclude that, in a distributed computing environment, one must focus not only on the number of data accesses, but also on the number of communication steps, which is usually tied to the number of iterations of the overall optimization algorithm~\cite{ma2015distributed}.  For this reason, researchers have proposed the use of batch (i.e., full gradient) methods, potentially using (partial or approximate) second-order information as in Newton's method \cite{shamir2013communication,zhang2015communication} or a quasi-Newton method such as L-BFGS \cite{zhu1997algorithm}.  The benefit of such methods is that they typically require fewer iterations to achieve high accuracy of the solution.  On the other hand, a Newton-type method requires that a linear system be solved (approximately) in every iteration.  A popular approach for this procedure is the linear conjugate gradient (CG) algorithm, such as proposed in \textsc{Disco}~\cite{zhang2015communication}.  Unfortunately, however, in many applications one might need a substantial number of CG steps in each Newton iteration.  Since each CG step requires one pass over the data and one round of communication, this can also lead to substantial communication overhead.
 
To overcome these issues, the algorithms \textsc{CoCoA} \cite{jaggi2014communication}, DisDCA \cite{yang2013analysis} and \textsc{CoCoA+} \cite{ma2015adding,Smith:2016wp} have been proposed to efficiently balance computation and communication in distributed optimization environments.  The main idea of these methods, with the data partitioned across any number of nodes in a cluster, is to define meaningful auxiliary subproblems to be solved in each node using only locally stored data.  These subproblems involve some inherited (partial) second-order information, which aids in yielding overall fast convergence for solving the original problems~\eqref{eq:A} and~\eqref{eq:B}.

As is the case for many other optimization methods for machine learning, the existing \textsc{CoCoA+} framework only yields a sublinear convergence rate of $\mathcal{O}(1/t)$ for the general convex case.  That said, we are motivated by the fact that several single machine solvers can be improved by incorporating Nesterov acceleration~\cite{nesterov2013introductory,beck2009fast}, leading to an improved rate of $\mathcal{O}(1/t^2)$; e.g., this was recently successfully done in \cite{shalev2013accelerated}. In this paper, we are able to provably accelerate the distributed \textsc{CoCoA+} framework, achieving a $\mathcal{O}(1/t^2)$ rate for reducing suboptimality.

It is worthwhile to mention that acceleration could be achieved using the Universal Catalyst proposed in \cite{lin2015universal}.  However, for our purposes, this approach is less appealing since it does not allow the local solver to produce randomized solutions which might only have sufficient quality in expectation.  Perhaps the accelerated algorithms most relevant to the present work are those in \cite{nesterov2013gradient,fercoq2015accelerated}. %MJ: you don't want catalyst among these i think
The subproblems in these papers are assumed to be strongly convex.  In our case, this is not general enough; instead, we manage to exploit the general structure of the objective $\OA$.%, which is given by a composition of a separable function and quadratic function.\footnote{FEC: Why do we say quadratic here?  We do not assume that $\OA$ always has a quadratic, so why is this said?
%Chenxin: I'm not sure about this. Maybe in the problems we wanted to solve like \eqref{eq:L1} and the dual of \eqref{eq:ERM}, there is always a quadratic term involved. We may remove this sentence for generality.
%}

%************
% Subsection
%************
\subsection{Contributions}

Overall, the contributions represented by the work in this paper can be summarized as follows.  Here, for any $\alphav \in \R^{n}$ in the context of \eqref{eq:A}, we define a corresponding dual solution $\wv\in \R^{d}$ for problem~\eqref{eq:B} as
\begin{equation}
\label{eq:dualPdualrelation}
\wv = \wv(\alphav) := \nabla f( A\alphav ) \, .
\end{equation}

\begin{itemize}
[leftmargin=0.5cm,topsep=0pt,itemsep=1ex,partopsep=0ex,parsep=0ex]
  \item We propose and analyze an accelerated communication-efficient block descent algorithm for solving problems~\eqref{eq:A} and~\eqref{eq:B}.  Our analysis shows that our framework possesses a convergence rate of $\mathcal{O}(1/t^2)$ for suboptimality, i.e., with $\alphav_t$ representing the $t$-th algorithm iterate, we prove the accelerated reduction of $\OA(\alphav_t) - \OA(\alphav_\star)$ over time $t$, where $\alphav_\star$ represents an optimal solution of \eqref{eq:A}.  Given the recent work in~\cite{dunner2016primal}, our results for reducing suboptimality can then also be cast in terms of an accelerated rate of the duality gap $G(\alphav) :=  \OA(\alphav) + \OB(\wv(\alphav))$, as a practically important accuracy certificate.
  \item The convergence analyses of \textsc{CoCoA} and \textsc{CoCoA+} yielded bounds involving quantities which depend on 
  the  Lipschitz constants for the local subproblems. In this sense, the results might be no better than similar bounds for inexact block proximal gradient descent.  By contrast, in this paper, we exploit the structure of the dual objective $\OB$ to obtain complexity bounds that do not depend on Lipschitz constants of the auxiliary subproblems.  Instead, our bounds merely depend on quantities related to local curvature corresponding to the subproblems.
  \item We extend our accelerated framework to also cover general non-strongly convex regularizers.  This is important, e.g., in the context of $\ell_1$-norm regularized Lasso, sparse logistic regression, and elastic net regularized problems.
  \item We have performed numerous numerical experiments to demonstrate that acceleration can lead to significant performance improvements.  In particular, the gains are especially large for small values of the regularization parameter $\lambda$, which is important in very-large-scale settings where one often desires the regularization to be inversely proportional to $n$.  The C++ code for our implementation of our framework is available on github: \url{https://github.com/schemmy/CoCoA-Experiments}.
\end{itemize}

%************
% Subsection
%************
\subsection{Organization}

The remainder of the paper is organized as follows. We start by introducing our new accelerated \textsc{CoCoA+} (\textsc{AccCoCoA+}) algorithm, including the design of its subproblems and strategies for solving them.  We then describe our main complexity result for the algorithm, showing its improvement over that for \textsc{CoCoA+}.  Finally, we comment on the results of our  numerical experiments.

%*********
% Section
%*********
\section{Accelerated \textsc{CoCoA+}}
\label{sec:acocoa}
In this section, we introduce our proposed accelerated \textsc{CoCoA+} (\textsc{AccCoCoA+}) algorithm.  We begin by defining notation related to the manner in which data is distributed across various machines, then define quantities related to the local subproblems to be solved in each iteration of the algorithm.  Of central importance for these subproblems are a carefully defined regularization scheme and a loose assumption on the accuracy to which each subproblem must be solved in each step of \textsc{AccCoCoA+}.

%************
% Subsection
%************
\subsection{Data Partitioning}
 
Suppose that the $n$ columns of the dataset are split across $K$ machines (nodes).  Let the index set of columns stored on node $k \in [K] := \{1,\dots,k\}$ be denoted as~$\mathcal{P}_k$.  We assume that the dataset is partitioned in a disjoint manner such that $\mathcal{P}_i \cap \mathcal{P}_j = \emptyset$ for any $i\neq j$ while $\cup_{k=1}^K \mathcal{P}_k = [n]$.  For notational convenience, we split the variable vector $\alphav \in \R^n$ into the set of $K$ vectors $\{\vv{\alphav}{k}\}_{k=1}^K$ by employing, for each $k \in [K]$, the masking operator
\begin{equation}\label{eq:partition}
{(\vv{\alphav}{k})}_i
 :=
 \begin{cases}
 \alpha_i&\mbox{if}\ i\in \mathcal{P}_k,\\
 0&\mbox{otherwise.}
\end{cases}
\end{equation}
As a consequence of separability of $g$ in \eqref{eq:A}, we write
\begin{equation}\label{asfwfrwafwafas} 
  \psi_k (\vv{\alphav}{k}) := \sum_{i\in\mathcal P_k} g_i( \alpha_i ),
\end{equation}
so that $g(\alphav) = \sum_{k=1}^K \psi_k(\vv{\alphav}{k})$.

%************
% Subsection
%************
\subsection{Subproblem}
 
Iteration $t$ of \textsc{AccCoCoA+} involves the auxiliary vectors $(\yv_t, \zv_t) \in \R^n \times \R^n$, which one may split into $\{\vv{\yv_t}{k}\}_{k=1}^K$ and $\{\vv{\zv_t}{k}\}_{k=1}^K$, respectively, in the same manner as $\alphav \in \R^n$ in~\eqref{eq:partition}.  Fundamentally, the goal in iteration $t$ on each node $k \in [K]$ is to (approximately) solve
\begin{align}\label{eq:subpdef3}
  \min_{\vv{\zv_{t+1}}{k} \in\R^{n}} \G_k(\vv{\zv_{t+1}}{k};\yv_t,\zv_t), 
\end{align}
where, for scalars $\sigma' \geq 0$ and $\theta_t \geq 0$ (see below), the local objective function is
\begin{align}\label{eq:subpdef4}
 \G_k(  \vv{\zv_{t+1}}{k}  ; \yv_t, \zv_t)  :=&\ 
  \psi_k(\vv{\zv_{t+1}}{k}) +\frac1Kf(A \yv_t) \notag\\
  & +  \nabla f(A \yv_t)^\top A (\vv{\zv_{t+1}}{k} - \vf{\yv}{k}{t})
   + \frac{\lambda \theta_t\sigma'}{2}\left\| A (\vv{\zv_{t+1}}{k}- \vf{\zv}{k}{t} )\right\|^2.
\end{align}
% \begin{align}\label{eq:subpdef4}
%  \G_k(  \vv{\zv_{t+1}}{k}  ; \yv_t, \zv_t )  :=&\; 
%   \psi_k(\vv{\zv_{t+1}}{k}) +\frac1K\frac{\lambda}{2}\left\|A {y}_{t}\right\|^2  \notag\\
%    + & \lambda \yv_t^\top A^\top  A (\vv{\zv_{t+1}}{k} - \vf{\yv}{k}{t}) \notag\\
%    +& \frac{\lambda \theta_t\sigma'}{2}\left\| A (\vv{\zv_{t+1}}{k}- \vf{\zv}{k}{t} )\right\|^2.
% \end{align}

At first glance, it is not obvious that this subproblem can be solved only using local data on node~$k$ due to the presence of the term $\nabla f(A \yv_t)^\top A (\vv{\zv_{t+1}}{k} - \vf{\yv}{k}{t})$, which is dependent on the entire dataset.
%When solving the subproblem, the term $\tfrac{1}{K}f(\yv_t)$ can be ignored since it is independent of $\vv{\zv_{t+1}}{k}$. 
However, by simply making the single shared vector $\wv_t := \nabla f(A \yv_t)$ available on each node, the local subproblem~\eqref{eq:subpdef3} only requires knowledge of the pair $(\vv{\yv_t}{k},\vv{\zv_t}{k})$ and the local part of $A$, and not the full vectors $(\yv_t,\zv_t)$.  Therefore, the storage of the variable vectors $\yv$ and $\zv$ can also be distributed.

The last term in \eqref{eq:subpdef4} represents a regularization term in which the parameter $\sigma' \geq 0$ plays a critical role.  In short, it can be interpreted as a measure for the cross-dependency of the partitioning of the data.  For our analysis, this term must be chosen, for some $\gamma \in [\tfrac1K,1]$, to satisfy
\begin{equation}\label{eq:sigmaineq}
  \sigma' \geq \sigma'_{min} := \gamma \max_{\alphav\in\\R^n}\frac{\|A \alphav\|^2}{\sum_{k=1}^{K} \|A \alphav^{[k]}\|^2}.
\end{equation}
It is easy to show that $\sigma'_{min}/\gamma = \max_{\alphav\in\\R^n}\frac{\|A \alphav\|^2}{\sum_{k=1}^{K} \|A \alphav^{[k]}\|^2} \in [1,K]$.  If it is equal to 1, then any pair of samples from different elements of the partition must be %uncorrelated, i.e.,
orthogonal to each other.  In such cases, the function $\OA$ is block-separable.  On the other hand, if $\sigma'_{min}/\gamma$ is close to $K$, then the data across the partition are strongly correlated.  Note that the choice $\sigma' = \gamma K$ is ``safe'' in the sense that \eqref{eq:sigmaineq} holds; see \cite[Lemma~4]{ma2015adding}. 

%************
% Subsection
%************
\subsection{Approximate Subproblem Solutions}

A strength of our framework is that each subproblem \eqref{eq:subpdef3} need not be solved exactly.  This is critical since, in the extreme, solving the subproblems exactly can be as difficult as solving the original problem.  In \textsc{AccCoCoA+}, we make the assumption that, in iteration $t$, the solver employed to solve the subproblem on node~$k$ yields an approximate solution with some additive error $\epsilon_t \geq 0$.  To be precise, we make the following assumption.
\begin{assumption}[$\epsilon_t$-approximate solutions]\label{asm:epsilon} 
  There exists a sequence $\{\epsilon_t\}_{t=0}^{\infty}  \geq 0$ such that, for each $t \in \{0,1,2,\dots\}$ and $k \in [K]$, the local solver employed on node $k$ in iteration $t$ produces
a (possibly random) $\vv{\zv_{t+1}}{k}$ satisfying
\begin{equation}\label{eq:subprostate}
  \Exp_{t+1}[\G_k(\vv{\zv_{t+1}}{k} ; \yv_t, \zv_t) | t ] \leq \G_k(\vv{{\zv_{t+1}^\star}}{k}; \yv_t, \zv_t) + \epsilon_t,
\end{equation}
where $\vv{{\zv_{t+1}^\star}}{k} :=  \arg\min_{\vv{\zv_{t+1}}{k} \in \R^n} \G_k(\vv{\zv_{t+1}}{k}; \yv_t, \zv_t)$ and $\Exp_{t+1}[\cdot | t]$ indicates conditional expectation given the algorithm history up to time $t$.
% \footnote{FEC: Please check the wording of this assumption.  Previously, it stated expectation, but I think conditional expectation is needed.}
\end{assumption}

%************
% Subsection
%************
\subsection{Algorithm}

\textsc{AccCoCoA+} is stated as Algorithm~\ref{alg:algo1}.   Given an initial iterate vector $\alphav_0$ in the effective domain of $g$ and the scalar $\theta_0 = 1$, each iteration involves a series of steps, only one of which involves communication between nodes.  First, the auxiliary vectors $\{\vv{\yv_t}{k}\}_{k=1}^K$ are set on each node, each representing a convex combination of the variables $\vv{\alphav_t}{k}$ and $\vv{\zv_t}{k}$.  Then, for setting up the local objective~\eqref{eq:subpdef4} for each subproblem~\eqref{eq:subpdef3}, the combined vector $\yv_t$ is used to compute $\wv_t$, which must be communicated to all nodes.  After this point in iteration $t$, all remaining steps involve local computation on each node: each subproblem is solved approximately to compute $\{\vv{\zv_{t+1}}{k}\}_{k=1}^K$, after which the elements of $\{\vv{\alphav_{t+1}}{k}\}_{k=1}^K$ are set.  Acceleration of the algorithm is due to the careful update for the sequence $\{\theta_t\}$, which, since it only involves a prescribed formula for a scalar quantity, can be performed identically on each node.  Observe that the update sequence ensures that $\theta_t \sim \mathcal{O}(1/t)$.

\begin{algorithm}
  \caption{Accelerated \textsc{CoCoA+} (\textsc{AccCoCoA+})}
  \label{alg:algo1}
\small 
  \begin{algorithmic}[1]
    \STATE choose $\alphav_0 \in \dom(g) \subseteq \mathbf R^n$; set $\zv_0:=\alphav_{0}$, $\gamma \in [\tfrac1K,1]$ and $\theta_0:=1$
    \FOR {$t \in \{0,1,2,\dots\}$} 
      \STATE \textbf{for} $k \in [K]$ in parallel, set
      \begin{equation}\label{afwgrvwfeagwefrw}
        \vf{\yv}{k}{t} := (1-\gamma\theta_t) \vf{\alphav}{k}{t} + \gamma\theta_t \vf{\zv}{k}{t} \vspace{-3mm}
      \end{equation}     
      \STATE set $\wv_t := \nabla f(A \yv_t)$ and communicate to all nodes
      \STATE \textbf{for} $k \in [K]$ in parallel, compute an $\epsilon_t$-approximate solution $\vv{\zv_{t+1}}{k}$ of subproblem \eqref{eq:subpdef3} 
      \STATE \textbf{for} $k \in [K]$ in parallel, set 
      \begin{equation}\label{afasfasfdsafa}
        \vf{\alphav}{k}{t+1}  := \vf{\yv}{k}{t} + \gamma\theta_t (\vf{\zv}{k}{t+1} - \vf{\zv}{k}{t}) \vspace{-3mm}
      \end{equation}
      \STATE set $\theta_{t+1} := \frac{\sqrt{\gamma^2\theta_t^4 + 4 \theta_t^2} - \gamma\theta_t^2}{2}$ (on each node) \label{eq.thetat1}
    \ENDFOR
  \end{algorithmic}
\end{algorithm}

%*********
% Section
%*********
\section{Convergence Analysis}

In this section, we study the convergence properties of the proposed \textsc{AccCoCoA+} algorithm.  First, we prove general complexity results, then, respectively in Sections \ref{sec:exact} and \ref{sec:inexact}, we provide interpretations of this main theorem for cases when the subproblems are solved exactly or inexactly.

First, we prove the following lemma related to the sequence $\{\theta_t\}_{t\geq0}$.  The result is similar to that given as Lemma 1 in \cite{fercoq2015accelerated}.

\begin{lemma}\label{lemma1} 
  The sequence $\{\theta_t\}_{t\geq 0}$ is positive, monotonically decreasing, and has
  \begin{equation}\label{eq:34}
    \frac{1-\gamma\theta_{t+1}}{\theta^2_{t+1}} = \frac{1}{\theta^2_t}
  \end{equation}
  and
  \begin{equation}\label{afsdfasfadsfdsa}
    \theta_t \leq \frac{2 }{t\gamma +2 } \leq 1
  \end{equation}
  for all $t \geq 0$.
\end{lemma}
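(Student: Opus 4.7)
The plan is to prove the three claims in the order (ii), (i), (iii), since the identity \eqref{eq:34} is essentially a restatement of the defining recursion and is what makes monotonicity and the decay bound transparent. The key observation is that $\theta_{t+1}$ is defined as the positive root of a scalar quadratic, so the whole lemma reduces to elementary algebra once this is recognized.

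To establish \eqref{eq:34}, I would clear denominators to obtain the equivalent form
\[
\theta_{t+1}^2 + \gamma\,\theta_t^2\,\theta_{t+1} - \theta_t^2 = 0,
\]
and check by the quadratic formula that the unique positive root in $\theta_{t+1}$ is exactly $\tfrac{\sqrt{\gamma^2\theta_t^4+4\theta_t^2}-\gamma\theta_t^2}{2}$, matching line~\ref{eq.thetat1} of Algorithm~\ref{alg:algo1}. An immediate induction from $\theta_0 = 1$ then shows $\theta_t > 0$ for all $t$ (the discriminant is positive and the chosen root is positive). For strict monotonicity, suppose toward a contradiction that $\theta_{t+1} \geq \theta_t$. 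Then \eqref{eq:34} yields $1 - \gamma\theta_{t+1} = \theta_{t+1}^2/\theta_t^2 \geq 1$, forcing $\gamma\theta_{t+1} \leq 0$, which contradicts $\gamma > 0$ and $\theta_{t+1} > 0$. Hence $\theta_{t+1} < \theta_t$, and since $\theta_0 = 1$, we already obtain the second inequality $\theta_t \leq 1$ in \eqref{afsdfasfadsfdsa}.

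For the decay bound, the cleanest route is the substitution $a_t := 1/\theta_t$. Dividing \eqref{eq:34} through by $\theta_{t+1}^2$ gives the recursion $a_{t+1}^2 = a_t^2 + \gamma a_{t+1}$, equivalently $(a_{t+1}-a_t)(a_{t+1}+a_t) = \gamma a_{t+1}$. Strict monotonicity of $\theta_t$ gives $a_{t+1} > a_t > 0$, so $a_{t+1}/(a_{t+1}+a_t) > 1/2$, and therefore $a_{t+1} - a_t > \gamma/2$. Telescoping from $a_0 = 1$ yields $a_t \geq 1 + t\gamma/2$, i.e., $\theta_t \leq \tfrac{2}{t\gamma+2}$, completing \eqref{afsdfasfadsfdsa}. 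I do not anticipate any real obstacle here; the only care needed is to order the argument so that parts (i) and (ii) are both in hand before invoking strict monotonicity $a_{t+1} > a_t$ in the telescoping step for (iii).
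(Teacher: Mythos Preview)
Your proof is correct. The treatment of positivity and the identity \eqref{eq:34} matches the paper's: both recognize $\theta_{t+1}$ as the positive root of $\theta^2 + \gamma\theta_t^2\theta - \theta_t^2 = 0$ and read \eqref{eq:34} off from that.

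Where you diverge is in monotonicity and the decay bound. For monotonicity, the paper rewrites \eqref{eq:34} as $\tfrac{1}{\theta_{t+1}^2} = \tfrac{1}{\theta_t^2} + \tfrac{\gamma}{\theta_{t+1}}$ and telescopes to $\tfrac{1}{\theta_t^2} = 1 + \sum_{i=1}^t \tfrac{\gamma}{\theta_i}$, from which decrease is immediate; your contradiction argument from \eqref{eq:34} directly is shorter and just as clean. For the decay bound \eqref{afsdfasfadsfdsa}, the paper proceeds by induction: assuming $\theta_t \le \tfrac{2}{t\gamma+2}$, it derives a quadratic inequality in $1/\theta_{t+1}$ and solves it explicitly to close the step. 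Your route via $a_t := 1/\theta_t$ and the difference-of-squares identity $(a_{t+1}-a_t)(a_{t+1}+a_t) = \gamma a_{t+1}$, combined with $a_{t+1} > a_t$ to get $a_{t+1}-a_t > \gamma/2$ and then telescoping, is more elementary and avoids the per-step quadratic. Both arguments are standard for FISTA-type recursions; yours is arguably tidier, while the paper's induction makes the role of the quadratic update more visible. Note that your telescoping yields strict inequality $\theta_t < \tfrac{2}{t\gamma+2}$ for $t\ge 1$, with equality only at $t=0$, which is of course consistent with the stated $\le$.
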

\begin{proof}
  For each $t \geq 0$, the value $\theta_{t+1}$ can be seen from Step~\ref{eq.thetat1} of Algorithm~\ref{alg:algo1} to be the positive root of the quadratic equation
  \begin{equation}\label{eq.quadratic}
    \theta^2 + (\gamma\theta_t^2) \theta - \theta_t^2 = 0.
  \end{equation}
  Since $\theta_0 = 1$, it follows from \eqref{eq.quadratic}, the fact that a strongly convex quadratic univariate function with a negative vertical intercept has a positive real root, and a simple inductive argument that $\theta_t > 0$ for all $t \geq 0$, as desired.  Next, plugging in $\theta_{t+1}$ for $\theta$ in \eqref{eq.quadratic} and rearranging, we obtain~\eqref{eq:34}.  This can again be rearranged to yield
  \begin{equation}\label{eq:34_rewrite}
    \frac{1}{\theta_{t+1}^2} = \frac{1}{\theta_t^2} + \frac{\gamma}{\theta_{t+1}},
  \end{equation}
  from which it follows that
  $$
  \frac{1}{\theta_t^2} =1+ \sum_{i=1}^{t} \frac{\gamma}{\theta_i}\ \ \text{for all $t \geq 0$}.
  $$
  This shows that $\{\theta_t\}_{t\geq0}$ is monotonically decreasing, as desired.  We now use mathematical induction to show \eqref{afsdfasfadsfdsa}.  First, \eqref{afsdfasfadsfdsa} clearly holds for $t=0$, for which we have
$\theta_0 = \frac{2 }{0+2  } = 1$.  Assuming it holds up to $t$, we have from \eqref{eq:34_rewrite} that
  \begin{equation}\label{eq:needtouse918}
    \frac{1}{\theta_{t+1}^2} -\frac{\gamma}{\theta_{t+1}} =
\frac1{\theta_t^2}
\geq \frac{(\gamma t+2)^2}{4}.
  \end{equation}
  Now observe that the quadratic equation in the variable $1/\theta$ given by
  \begin{align*}
    \frac{1}{\theta^2} - \frac{\gamma}{\theta} -  \frac{(\gamma t+2)^2}{4} = 0\ \ \text{has roots}\ \ \frac{1}{\theta} = \frac{\gamma \pm\sqrt{\gamma^2 + (\gamma t+2)^2 } }{2}.
\end{align*}
  This shows that, by~\eqref{eq:needtouse918} and since $\theta_{t+1} > 0$, we have
%\begin{align}
%\label{eq:sol1_918}
%  \frac{1}{\theta_{t+1}}\geq\frac{  \gamma + \sqrt{  \gamma^2 + (\gamma t+2)^2   }  }{2} >0,
%\end{align} 
%or
%\begin{align}
%\label{eq:sol2_918}
%  \frac{1}{\theta_{t+1}}\leq\frac{  \gamma - \sqrt{  \gamma^2 + (\gamma t+2)^2   }  }{2} <0.
%\end{align}
%Since $\theta_{t+1}>0$ as we have shown above, \eqref{eq:sol1_918} must hold, 
%which leads to
\begin{align*}
  \frac{1}{\theta_{t+1}}
  \geq
      \frac{  \gamma + \sqrt{  \gamma^2 + (\gamma t+2)^2   }  }{2}
  \geq
      \frac{  \gamma + \sqrt{    (\gamma t  + 2)^2   }  }{2}
  %=
  %    \frac{  \gamma +  \gamma t  + 2     }{2}
  =
      \frac{     \gamma (t+1)  + 2     }{2}.
\end{align*} 
Therefore, we conclude $\theta_{t+1}\leq \frac{2}{(t+1)\gamma   +2}\leq 1$, which concludes the proof.

\end{proof}

Next, we prove the following lemma, which is a modification of Lemma~2 in \cite{fercoq2015accelerated}.
 
\begin{lemma}\label{lemma2}
Let $\{\alphav_t,\zv_t\}_{t\geq 0}$ be generated by Algorithm~\ref{alg:algo1}. Then, for all $t\geq0$,
\begin{equation}\label{eq:35}
\alphav_t =   \sum_{l=0}^t {\rho^l_t \zv_l},
\end{equation}
where the coefficients $\{\rho^0_t, \rho^1_t, ..., \rho^t_t\}$ are nonnegative and sum to 1; i.e., $\alphav_t$ is a convex combination of the vectors $\{\zv_0, \zv_1,..., \zv_t\}$.  More precisely, the coefficients are defined recursively in $t$ as $\rho_0^0 = 1, 
\rho_1^0 = 1-\gamma \theta_0, \rho_1^1 = \gamma \theta_0$ and, for all $t\geq 1$,
\begin{align} \label{eq:36}
\rho^l_{t+1} = 
\begin{cases}
(1-\gamma\theta_t)\rho_t^l& \text{for}\ \ l \in \{0,...,t\},\\
%(1-\gamma\theta_t)\rho_t^t =    
%(1-\gamma\theta_t)  \gamma \theta_{t-1}, & l=t,\\
%\theta_t(1-\gamma\theta_{t-1})+ (\theta_{t-1}-\theta_t)$, & $l=t$,\\
 \gamma \theta_t  & \text{for}\ \ l=t+1.
\end{cases}
\end{align}
%Moreover, for all $t\geq 0$, it follows that
%\begin{equation}\label{eq:37}
%\rho^t_{t+1}  = (1-\gamma\theta_t)\rho^t_t.
%\end{equation}
\end{lemma}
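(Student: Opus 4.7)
\textbf{Proof proposal for Lemma \ref{lemma2}.} The plan is a straightforward induction on $t$, with the key preparatory step being a simplification of the two updates \eqref{afwgrvwfeagwefrw} and \eqref{afasfasfdsafa} into a single convex combination relating $\alphav_{t+1}$, $\alphav_t$, and $\zv_{t+1}$. Substituting \eqref{afwgrvwfeagwefrw} into \eqref{afasfasfdsafa} on each block $k$ causes the $\vf{\zv}{k}{t}$ terms to cancel and yields
\begin{equation*}
\vf{\alphav}{k}{t+1} = (1-\gamma\theta_t)\vf{\alphav}{k}{t} + \gamma\theta_t\, \vf{\zv}{k}{t+1},
\end{equation*}
which, summed across blocks, gives the master recursion $\alphav_{t+1} = (1-\gamma\theta_t)\alphav_t + \gamma\theta_t \zv_{t+1}$. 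This is the one nontrivial algebraic move; everything afterwards is bookkeeping.

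With the master recursion in hand, I would verify the two base cases directly: since $\zv_0 = \alphav_0$ by initialization we get $\alphav_0 = \zv_0$ and hence $\rho_0^0 = 1$, and applying the master recursion once gives $\alphav_1 = (1-\gamma\theta_0)\zv_0 + \gamma\theta_0 \zv_1$, matching $\rho_1^0 = 1-\gamma\theta_0$ and $\rho_1^1 = \gamma\theta_0$. For the inductive step, assume $\alphav_t = \sum_{l=0}^t \rho_t^l \zv_l$; plug this into the master recursion to obtain
\begin{equation*}
\alphav_{t+1} = \sum_{l=0}^t (1-\gamma\theta_t)\rho_t^l\, \zv_l + \gamma\theta_t\, \zv_{t+1},
\end{equation*}
which by inspection is $\sum_{l=0}^{t+1}\rho_{t+1}^l \zv_l$ with $\rho_{t+1}^l$ defined exactly as in \eqref{eq:36}.

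Finally, I would verify that the coefficients are nonnegative and sum to $1$. For nonnegativity, note that Lemma \ref{lemma1} gives $\theta_t \in (0,1]$, and since $\gamma \in [\tfrac{1}{K},1]$ we have $\gamma\theta_t \in (0,1]$ and thus $1-\gamma\theta_t \in [0,1)$; a trivial induction then shows every $\rho_t^l \geq 0$. For the sum, induction with the recursion yields $\sum_{l=0}^{t+1}\rho_{t+1}^l = (1-\gamma\theta_t)\sum_{l=0}^t \rho_t^l + \gamma\theta_t = 1$ given that the previous sum equals $1$, closing the argument.

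I do not expect any serious obstacle here; the entire lemma is essentially a consequence of recognizing that the two-step update collapses to a single convex combination. The only mild subtlety is ensuring that the index ranges in \eqref{eq:36} line up when reindexing after the inductive step, which is purely notational.
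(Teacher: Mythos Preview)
Your proposal is correct and follows essentially the same approach as the paper: both substitute \eqref{afwgrvwfeagwefrw} into \eqref{afasfasfdsafa} to obtain the collapsed recursion $\alphav_{t+1} = (1-\gamma\theta_t)\alphav_t + \gamma\theta_t \zv_{t+1}$ and then proceed by induction, with nonnegativity coming from $\gamma\theta_t \in (0,1]$ via Lemma~\ref{lemma1}. The only cosmetic difference is that you isolate the master recursion upfront, whereas the paper performs the same substitution inside the inductive step.
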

\begin{proof}
We proceed by induction. First, notice that $\alphav_0 = \zv_0 = \rho _0^0\zv_0$ where $\rho_0^0 := 1$.  By \eqref{afwgrvwfeagwefrw}, this implies that $\yv_0=\zv_0$, which together with 
$\theta_0=\oneK$ gives (see \eqref{afasfasfdsafa})
$$\alphav_1 
   = \yv_0+ \gamma\theta_0(\zv_1-\zv_0) 
   = \zv_0+ \gamma\theta_0(\zv_1-\zv_0) 
   =  (1-\gamma \theta_0) \zv_0 + \gamma \theta_0 \zv_1,$$
   which proves \eqref{eq:35} for $t = 1$.  Assuming now that $\eqref{eq:35}$ holds for some $t\geq 1$, we obtain
\begin{align} 
\alphav_{t+1} 
&\overset{\eqref{afasfasfdsafa}}{=}
   \yv_t + \gamma\theta_t(\zv_{t+1}-\zv_t)\notag\\
&\overset{\eqref{afwgrvwfeagwefrw}}{=}
   (1-\gamma\theta_t)  \alphav_t + \gamma\theta_t \zv_t + \gamma\theta_t(\zv_{t+1}-\zv_t)\notag\\
   %------------------------------------------------
&\overset{\eqref{eq:35}}{=}
   (1-\gamma\theta_t) \sum_{l=0}^t {\rho^l_t \zv_l} + \gamma\theta_t \zv_t + \gamma\theta_t(\zv_{t+1}-\zv_t)\notag\\
   %------------------------------------------------
&\overset{\eqref{eq:35}}{=}
    \sum_{l=0}^{t} 
     \underbrace{(1-\gamma\theta_t)  \rho^l_t}_{\rho_{t+1}^l} \zv_l  
     + \underbrace{\gamma\theta_t}_{\rho^{t+1}_{t+1}}  \zv_{t+1}.
     \label{eq.convcomb}
\end{align}
%For the second case in \eqref{eq:36}, we have used two steps. 
%First, we need to define 
%$\rho_{t+1}^t = (1-\gamma\theta_t)\rho^t_t$ in the last step of above derivation. Second, in this way, we can show the sum of $\rho_t^l$ is $1$:
%\begin{align}
%  &\sum_{l=0}^{t-1} \rho_{t+1}^l + \rho_{t+1}^t + \rho_{t+1}^{t+1} \notag\\
%  =& \sum_{l=0}^{t-1} (1-\gamma\theta_t)\rho^l_t +  (1-\gamma\theta_t)\rho^t_t  + \gamma\theta_t \notag\\
%  =& (1-\gamma\theta_t) \sum_{l=0}^{t} \rho^l_t + \gamma\theta_t \notag\\
%  =&(1-\gamma\theta_t)  + \gamma\theta_t  = 1.
%\end{align}
From \eqref{afsdfasfadsfdsa} and since $\gamma \in [\frac{1}{K},1]$, it follows that $\rho^l_{t+1}\geq 0$ for all $l \in \{0,\dots,t+1\}$. It remains to show that the constants sum to 1.  This is true since, for all $t \geq 1$, the weights in \eqref{eq.convcomb} (for $\alphav_{t+1}$) are obtained by taking the corresponding weights for $\alphav_t$, multiplying them by $\gamma\theta_t \in (0,1]$, then including the weight for $\zv_{t+1}$ as $\gamma\theta_t$.
% the convex combination  $\alphav_t$ is a convex combination of $\zv_0, \zv_1,...,\zv_t$, and by  \eqref{afasfasfdsafa} , 
%$\alphav_{t+1}$ is an affine combination of $\alphav_t, \zv_t$ and $\zv_{t+1}$.
\end{proof}

Our next result relates to the optimal solution of an instance of subproblem \eqref{eq:subpdef3}.

\begin{lemma}\label{lemma3}
Let 
$\vv{{\uv^\star}}{k}:=\arg\min_{\vv{\uv}{k} } 
  \G_k(  \vb{\uv}{k}  ; \yv , \zv)   
$.
Then, for all $\vv{\uv}{k}$, 
\begin{equation}\label{asdfsafdsafsfsa}
\G_k(  \vb{\uv}{k}  ; \yv , \zv)   \geq  \G_k( \vv{{\uv^\star}}{k} ; \yv , \zv)  + \frac{\lambda \theta_t\sigma'}{2} \| A(
 \vb{\uv}{k} - \vv{{\uv^\star}}{k} ) \|^2.
\end{equation}

\end{lemma}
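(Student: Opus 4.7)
The plan is to exploit the fact that $\G_k(\cdot;\yv,\zv)$ decomposes as a convex function plus a convex quadratic in $A(\cdot)$. Specifically, write $\G_k(\vv{\uv}{k};\yv,\zv) = F_1(\vv{\uv}{k}) + F_2(\vv{\uv}{k})$, where
\[
F_1(\vv{\uv}{k}) := \psi_k(\vv{\uv}{k}) + \tfrac1K f(A\yv) + \nabla f(A\yv)^\top A(\vv{\uv}{k} - \vf{\yv}{k}{})
\]
is convex (since $\psi_k$ is convex and the remainder is affine) and
\[
F_2(\vv{\uv}{k}) := \tfrac{\lambda\theta_t\sigma'}{2}\|A(\vv{\uv}{k} - \vf{\zv}{k}{})\|^2
\]
is a convex quadratic that supplies strong convexity in the seminorm $\|A(\cdot)\|$ with modulus $\lambda\theta_t\sigma'$. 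The claim \eqref{asdfsafdsafsfsa} is then the standard strong-convexity inequality about the minimizer $\vv{\uv^\star}{k}$.

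To carry this out, first apply the exact identity for the quadratic $F_2$:
\[
F_2(\vv{\uv}{k}) = F_2(\vv{\uv^\star}{k}) + \lambda\theta_t\sigma' \langle A(\vv{\uv^\star}{k} - \vf{\zv}{k}{}),\, A(\vv{\uv}{k} - \vv{\uv^\star}{k})\rangle + \tfrac{\lambda\theta_t\sigma'}{2}\|A(\vv{\uv}{k} - \vv{\uv^\star}{k})\|^2,
\]
which comes from expanding $\|A(\vv{\uv}{k}-\vf{\zv}{k}{})\|^2$ around $A(\vv{\uv^\star}{k}-\vf{\zv}{k}{})$. Second, use convexity of $F_1$ to obtain $F_1(\vv{\uv}{k}) \geq F_1(\vv{\uv^\star}{k}) + \langle \xi + A^\top \nabla f(A\yv),\, \vv{\uv}{k} - \vv{\uv^\star}{k}\rangle$ for any $\xi \in \partial \psi_k(\vv{\uv^\star}{k})$. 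Adding these and invoking the first-order optimality condition at $\vv{\uv^\star}{k}$, namely that there exists $\xi^\star \in \partial\psi_k(\vv{\uv^\star}{k})$ making $\langle \xi^\star + A^\top\nabla f(A\yv) + \lambda\theta_t\sigma' A^\top A(\vv{\uv^\star}{k}-\vf{\zv}{k}{}),\, \vv{\uv}{k} - \vv{\uv^\star}{k}\rangle \geq 0$ for all admissible $\vv{\uv}{k}$, the linear terms collapse and \eqref{asdfsafdsafsfsa} drops out.

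The only delicate point will be bookkeeping the effective feasible set for the subproblem: both $\vv{\uv}{k}$ and $\vv{\uv^\star}{k}$ live in the coordinate subspace $V_k = \{v \in \R^n : v_i = 0 \text{ for } i \notin \mathcal{P}_k\}$, since $\psi_k$ depends only on entries indexed by $\mathcal{P}_k$ and the algorithm's masking operator forces the remaining entries to $0$. Because $\vv{\uv}{k} - \vv{\uv^\star}{k} \in V_k$, any component of the optimality residual that lies in $V_k^\perp$ (the normal cone at $\vv{\uv^\star}{k}$ relative to $V_k$) contributes zero to the inner product. This is the only step where one must be careful, but it is routine once the subspace structure is made explicit, and it does not weaken the strong-convexity constant. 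This completes the plan.
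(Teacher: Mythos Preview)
Your proposal is correct and follows essentially the same argument as the paper: both expand the quadratic term $\|A(\cdot-\vf{\zv}{k}{})\|^2$ exactly around $\vv{\uv^\star}{k}$, apply the subgradient inequality for the convex part $\psi_k$ (plus affine terms), and then eliminate the resulting linear term via the first-order optimality condition $0\in\partial\G_k(\vv{\uv^\star}{k};\yv,\zv)$. The only cosmetic difference is that you package the computation as $F_1+F_2$ before combining, whereas the paper carries it out in a single chain of inequalities; your remark about the subspace $V_k$ is a slight extra bit of care that the paper leaves implicit.
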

\begin{proof}
  Using convexity of $f$ and $g_i$ for all $i \in \mathcal{I}$ (and, therefore, convexity of $\psi_k$ defined in \eqref{asfwfrwafwafas}), one finds that for $\vv{\vvv}{k} \in \partial \psi_k(\vb{{\uv^\star}}{k})$ one has
\begin{align*}
  &\ \G_k(  \vb{\uv}{k}  ; \yv , \zv)  
\\
     \overset{\eqref{eq:subpdef4}}{=}&\   
     \psi_k(\vb{\uv}{k}) +\frac1K f(A \yv)
   + \nabla f(A \yv)^\top A(\vb{\uv}{k} - \vv{\yv}{k})
   + \frac{\lambda \theta_t\sigma'}{2}
     \left\| A (\vv{\uv}{k}- \vv{\zv}{k} )\right\|^2
\\
%-----------------------------------
\geq&\ 
     \psi_k(\vb{{\uv^\star}}{k}) 
   + \ve{\vv{\vvv}{k}}{ \vv{\uv}{k} - \vb{{\uv^\star}}{k}  }  
\\&\qquad
   + \frac1K f(A \yv)
   + \nabla f(A \yv)^\top A (\vb{\uv}{k}-\vb{{\uv^\star}}{k}+ \vb{{\uv^\star}}{k}- \vv{\yv}{k}  ) 
\\&\qquad 
   + \frac{\lambda \theta_t\sigma'}{2}
        \left\| A (\vv{\uv}{k} -\vb{{\uv^\star}}{k}+ \vb{{\uv^\star}}{k}- \vv{\zv}{k} )\right\|^2
%-----------------------------------
\\ =&\ 
    \psi_k(\vb{{\uv^\star}}{k}) 
   + \ve{\vv{\vvv}{k} }{ \vv{\uv}{k} - \vb{{\uv^\star}}{k}  }  
\\&\qquad 
   + \frac1K f(A\yv)
   + \nabla f(A \yv)^\top A (\vb{\uv}{k}-\vb{{\uv^\star}}{k}  ) 
   + \nabla f(A \yv)^\top A ( \vb{{\uv^\star}}{k}- \vv{\yv}{k}  ) 
\\ &\qquad 
   + \frac{\lambda \theta_t\sigma'}{2}
        \left\| A (\vv{\uv}{k} -\vb{{\uv^\star}}{k})\right\|^2
   + \frac{\lambda \theta_t\sigma'}{2}
        \left\| A (\vb{{\uv^\star}}{k}- \vv{\zv}{k} )\right\|^2
\\&\qquad 
   + \lambda \theta_t\sigma'
        (\vb{\uv}{k}- \vb{{\uv^\star}}{k}  )^\top  A^\top A  (\vb{{\uv^\star}}{k} - \vv{\zv}{k})        
%-----------------------------------
\\=&\ 
    \underbrace{\psi_k(\vb{{\uv^\star}}{k})
                  +\frac1K f(A\yv)  
                  + \nabla f(A \yv)^\top A (  \vb{{\uv^\star}}{k}- \vv{\yv}{k}  ) 
                  +\frac{\lambda \theta_t\sigma'}{2}\left\| A (\vb{{\uv^\star}}{k}- \vv{\zv}{k} )\right\|^2
               }_{= \G_k( \vv{{\uv^\star}}{k} ; \yv,\zv)}
\\&\qquad 
   + \ve{\vv{\vvv}{k}
                        +A^\top\nabla f(A \yv)
                        +{\lambda\theta_t\sigma' }A^\top A  (\vb{{\uv^\star}}{k} - \vv{\zv}{k})
        }{ \vv{\uv}{k} - \vb{{\uv^\star}}{k}  }
\\&\qquad 
   + \frac{ \lambda\theta_t\sigma' }{2}
        \left\| A (\vb{\uv}{k}- \vb{{\uv^\star}}{k}  )  \right\|^2   
\\ \geq &\  
    \G( \vv{{\uv^\star}}{k} ; \yv,\zv)    
   + \frac{ \lambda\theta_t\sigma' }{2}
    \left\| A (\vb{\uv}{k}- \vb{{\uv^\star}}{k}  )  \right\|^2,
\end{align*}
where the fact that $$\vv{\vvv}{k}
                        +A^\top\nabla f(A \yv)
                        +{\lambda\theta_t\sigma' }A^\top A  (\vb{{\uv^\star}}{k} - \vv{\zv}{k}) \in \partial \G_k( \vv{{\uv^\star}}{k} ; \yv,\zv)$$
yields the final inequality.
\end{proof}

We now present our main convergence theorem.  Our approach for proving the result is based on the use of randomized estimated sequences, as in~\cite{fercoq2015accelerated}.  However, we have included an important improvement to this approach that allows us to consider subproblems that are not strongly convex.  This is only possible due to the special structure of our objective function $\OA$.

\begin{theorem}\label{thm:convexCaseComplexity}
  For any optimal solution $\alpha_\star$ of problem~\eqref{eq:A} and all $t \geq 1$,
  \begin{equation}\label{asfsafsafasfa} 
    \begin{aligned}
   &\ \Exp[  \OA(\alphav_t) - \OA(\alphav_\star) ] \\
 \leq&\ \frac{4 }{(t\gamma-\gamma  +2 )^2} 
 \left(
   (1-\gamma )
   (\OA(\alphav_0) - \OA(\alphav_\star)) +  \frac{   \gamma \lambda  \sigma'}{2} C
+   K\epsilon_0\gamma+\sum_{j=1}^{t-1} E_j 
\right)
 \end{aligned}
 \end{equation} 
 holds where
 \begin{subequations}
 \begin{align}
   C &:= \sum_{k=1}^K
  \| A(\vv{\alphav_\star}{k}- \vf{\alphav}{k}{0} )\|^2,
   \\ 
    E_j &:= \frac{K \epsilon_j \gamma}{\theta_j}
 +  \frac{\gamma}{ \theta_{j-1}} \epsilon_{j-1} 
  + \gamma\cR  
     \sqrt{ 
\frac{2\lambda  \sigma' }
      { \theta_{j-1}} \epsilon_{j-1} }\ \ \text{for all}\ \ j \in \{1,\dots,t-1\}, \\
   \text{and}\ \ 
   \cR &:=  
   % \min_{\alphav_\star} 
   \max_{k,t}   \|  A 
  (\vv{\alphav_\star}{k}- \vb{\zv_t^\star}{k}
  )\|. \label{eq:defRR}
  \end{align}
\end{subequations}

%\todo[inline,color=yellow]{
%Ok, so not it is a bit complicated to explain, so let me try :)

%Ideally, we could write the Theorem as follows:

%Let $\alpha_*$ be ANY solution, 
%and we define
%$$\cR :=    \max_{k,t}   \|  A 
%  (\vv{\alphav_\star}{k}- \vb{\zv_t^\star}{k}
%  )\|$$
%and $C=....$ and $E_j = .....$, 
%then \eqref{asfsafsafasfa} hold.

%But, because we DO NOT USE in algorithm knowledge of $\alpha_*$, then we can make some constants smaller, by choosing
%$\alpha_* =\arg\min_{\alpha \st \alpha \mbox{is optimal solution}}  \max_{k,t}   \|  A 
%  (\vv{\alphav_\star}{k}- \vb{\zv_t^\star}{k}
%  )\|$
%and
%$$
%\cR  :=  \min_{\alphav_\star \st 
%\alphav_\star \mbox{is optimal solution}} \max_{k,t}   \|  A 
%  (\vv{\alphav_\star}{k}- \vb{\zv_t^\star}{k}
%  )\|
%  $$ 
  
%But, this is just one trick to make the $\cR$ to appear small..... But, now I think if we define $\cR$ this way, we should also say that 
%$\alpha_ \star$ in the definition of $C$ should be the same one :) 
%}
 \end{theorem}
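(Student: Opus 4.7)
My plan is to follow a randomized-estimated-sequence style argument in the spirit of \cite{fercoq2015accelerated}, but adapted to exploit the particular structure of $\OA = f \circ A + g$ so as to avoid any strong convexity assumption on the subproblem $\G_k$. The backbone of the argument will be a ``per-iteration'' inequality of the form
\begin{equation*}
\OA(\alphav_{t+1}) \le (1-\gamma\theta_t)\OA(\alphav_t) + \gamma\theta_t \sum_{k=1}^K \G_k\bigl(\vf{\zv}{k}{t+1};\yv_t,\zv_t\bigr),
\end{equation*}
which reduces the whole analysis to bounding the sum of subproblem values. To obtain it I will (i) apply Lipschitz smoothness of $\nabla f$ at $A\yv_t$ to get $f(A\alphav_{t+1}) \le f(A\yv_t)+\nabla f(A\yv_t)^\top A(\alphav_{t+1}-\yv_t)+\tfrac{\lambda}{2}\|A(\alphav_{t+1}-\yv_t)\|^2$, (ii) substitute the algorithm step $\alphav_{t+1}-\yv_t = \gamma\theta_t(\zv_{t+1}-\zv_t)$ and invoke \eqref{eq:sigmaineq} so that $\|A(\zv_{t+1}-\zv_t)\|^2 \le \tfrac{\sigma'}{\gamma}\sum_k \|A(\vf{\zv}{k}{t+1}-\vf{\zv}{k}{t})\|^2$, yielding the coefficient $\tfrac{\lambda\gamma\theta_t^2\sigma'}{2} = \gamma\theta_t\cdot\tfrac{\lambda\theta_t\sigma'}{2}$ that is exactly $\gamma\theta_t$ times the subproblem regularizer, (iii) use convexity of $g$ together with $\vf{\alphav}{k}{t+1} = (1-\gamma\theta_t)\vf{\alphav}{k}{t}+\gamma\theta_t\vf{\zv}{k}{t+1}$ to bound $g(\alphav_{t+1}) \le (1-\gamma\theta_t)g(\alphav_t)+\gamma\theta_t g(\zv_{t+1})$, and finally (iv) use $\yv_t - \zv_t = (1-\gamma\theta_t)(\alphav_t-\zv_t)$ and convexity of $f\circ A$ at $\yv_t$ with test point $\alphav_t$ to absorb the residual term $f(A\yv_t)+\nabla f(A\yv_t)^\top A(\alphav_t-\yv_t)$ into $(1-\gamma\theta_t)f(A\alphav_t)$.

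Next I will upper-bound $\Exp[\sum_k \G_k(\vf{\zv}{k}{t+1};\yv_t,\zv_t)\mid t]$ by $\OA(\alphav_\star)+\tfrac{\lambda\theta_t\sigma'}{2}(D_t-\Exp[D_{t+1}\mid t])+(\text{error})$ where $D_t:=\sum_k\|A(\vf{\alphav_\star}{k}-\vf{\zv}{k}{t})\|^2$. The chain is: Assumption~\ref{asm:epsilon} gives $\Exp[\G_k(\vf{\zv}{k}{t+1}; \cdot)\mid t]\le \G_k(\vf{{\zv_{t+1}^\star}}{k};\cdot)+\epsilon_t$; Lemma~\ref{lemma3} applied with $\uv^{[k]}=\vf{\alphav_\star}{k}$ gives $\G_k(\vf{{\zv_{t+1}^\star}}{k};\cdot)\le \G_k(\vf{\alphav_\star}{k};\cdot)-\tfrac{\lambda\theta_t\sigma'}{2}\|A(\vf{\alphav_\star}{k}-\vf{{\zv_{t+1}^\star}}{k})\|^2$; and summing the definitions of $\G_k$ at $\vf{\alphav_\star}{k}$ together with convexity of $f\circ A$ at $\yv_t$ with test point $\alphav_\star$ yields $\sum_k\G_k(\vf{\alphav_\star}{k};\yv_t,\zv_t)\le \OA(\alphav_\star)+\tfrac{\lambda\theta_t\sigma'}{2}D_t$.

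The main technical obstacle is that the subproblem is only curved in the $A$-direction (Lemma~\ref{lemma3} gives a bound in $\|A(\cdot)\|^2$, not in the iterate norm), so the telescoping distance I obtain is in terms of $\vf{{\zv_{t+1}^\star}}{k}$, not the actual inexact iterate $\vf{\zv}{k}{t+1}$; to let the telescope close I must replace $\sum_k\|A(\vf{\alphav_\star}{k}-\vf{{\zv_{t+1}^\star}}{k})\|^2$ by $\Exp[D_{t+1}\mid t]$ up to an error. For this, Lemma~\ref{lemma3} (this time applied with $\vv{\uv}{k}=\vf{\zv}{k}{t+1}$) combined with Assumption~\ref{asm:epsilon} implies $\Exp[\|A(\vf{\zv}{k}{t+1}-\vf{{\zv_{t+1}^\star}}{k})\|^2\mid t]\le \tfrac{2\epsilon_t}{\lambda\theta_t\sigma'}$, and then the identity $\|u-v\|^2 = \|u\|^2 - 2u^\top v +\|v\|^2$ with $u = A(\vf{\alphav_\star}{k}-\vf{\zv}{k}{t+1})$, $v = A(\vf{{\zv_{t+1}^\star}}{k}-\vf{\zv}{k}{t+1})$, combined with the deterministic bound $\|A(\vf{\alphav_\star}{k}-\vf{{\zv_{t+1}^\star}}{k})\|\le\cR$ via triangle inequality, produces the cross-term with $\sqrt{\epsilon_t}$ and the residual $O(\epsilon_t)$. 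It is precisely this mismatch that accounts for the two error terms in $E_j$ (one linear in $\epsilon_{j-1}$, one in $\sqrt{\epsilon_{j-1}}$) and introduces the constant $\cR$.

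Finally, I will unroll the inequality. Letting $\delta_t:=\Exp[\OA(\alphav_t)-\OA(\alphav_\star)]$ and dividing the full-expectation version by $\theta_t^2$, the recursion $\tfrac{1-\gamma\theta_t}{\theta_t^2}=\tfrac{1}{\theta_{t-1}^2}$ from Lemma~\ref{lemma1} makes the $\delta$-terms telescope, while the factor $\tfrac{\gamma\theta_t\cdot\lambda\theta_t\sigma'/2}{\theta_t^2}=\tfrac{\gamma\lambda\sigma'}{2}$ makes the $D_t$-terms telescope, so after summing from $j=0$ to $t-1$ the only surviving $D$-term is $\tfrac{\gamma\lambda\sigma'}{2}D_0 = \tfrac{\gamma\lambda\sigma'}{2}C$ (using $\zv_0=\alphav_0$) minus a nonnegative $\Exp[D_t]$ that can be dropped. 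The $t=0$ iteration is special because $\theta_0=1$, which is where the $(1-\gamma)\delta_0$ and the isolated $K\epsilon_0\gamma$ terms come from; for $j\ge 1$ the error contributions naturally acquire $1/\theta_{j-1}$ factors and aggregate into $E_j$. The stated rate $4/(t\gamma-\gamma+2)^2$ then follows from $\theta_{t-1}\le 2/((t-1)\gamma+2)$ in Lemma~\ref{lemma1}.
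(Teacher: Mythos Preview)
Your plan is correct and follows essentially the same strategy as the paper's proof: a descent inequality built from Lipschitz smoothness of $f$, the block aggregation bound \eqref{eq:sigmaineq}, Lemma~\ref{lemma3} applied both at $\vv{\alphav_\star}{k}$ (to extract the negative distance term) and at $\vf{\zv}{k}{t+1}$ (to control the inexactness in the $A$-seminorm), followed by the $\theta_t$-telescoping via Lemma~\ref{lemma1}.

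Two differences from the paper are worth flagging. First, you bypass Lemma~\ref{lemma2} entirely by observing directly that $\alphav_{t+1}=(1-\gamma\theta_t)\alphav_t+\gamma\theta_t\zv_{t+1}$ and applying convexity of $g$ to get $g(\alphav_{t+1})\le(1-\gamma\theta_t)g(\alphav_t)+\gamma\theta_t g(\zv_{t+1})$; the paper instead introduces the auxiliary majorant $\hat g^t=\sum_l\rho_t^l g(\zv_l)$ and works with $\hat\OA^t$. Your route is shorter and legitimate here because all $K$ blocks are updated every iteration; the paper's $\hat g^t$ machinery is inherited from the randomized single-block setting of~\cite{fercoq2015accelerated}, where the direct convex-combination identity for $\alphav_{t+1}$ is unavailable. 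Second, you choose to telescope in $D_t=\sum_k\|A(\vf{\alphav_\star}{k}-\vf{\zv}{k}{t})\|^2$ by lower-bounding the gain $\|A(\vf{\alphav_\star}{k}-\vf{{\zv_{t+1}^\star}}{k})\|^2$ via $\Exp[D_{t+1}\mid t]$, whereas the paper keeps $r_{t+1}^2:=\sum_k\|A(\vf{\alphav_\star}{k}-\vf{{\zv_{t+1}^\star}}{k})\|^2$ intact and instead upper-bounds $D_t$ by $r_t^2$ plus error. Both are valid, but note that your choice makes the inexactness terms at step $t$ carry the index $t$, not $t-1$; so the $E_j$ you obtain will read $(K+1)\gamma\epsilon_j/\theta_j+\gamma\cR\sqrt{2\lambda\sigma'\epsilon_j/\theta_j}$ rather than the paper's mixed $j/j{-}1$ form. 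Your sentence ``the error contributions naturally acquire $1/\theta_{j-1}$ factors'' is therefore not what your own construction yields --- you will recover an equivalent bound with shifted indices, but not the literal $E_j$ in the theorem statement.
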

\begin{proof}
  By convexity of $g$, it follows that, for all $t \geq 0$, one has
\begin{equation}\label{eq:44}
g(\alphav_t)\overset{\eqref{eq:35}}{=} 
g\left(\sum_{l=0}^t \rho_t^l \zv_l\right)
\leq 
\sum_{l=0}^t \rho_t^l 
 g\left( \zv_l\right)
=:\hat g^t.  %MJ: TODO: could better be called g^t
%=\sum_{k=1}^K \psi_k \Big(\sum_{l=0}^t \rho_t^l \vf{\zv}{k}{l}\Big )
% \overset{(convexit\yv)}{\leq} \sum_{k=1}^K\sum_{l=0}^t \rho^l_t %\psi_k(\vf{\zv}{k}{l})
%  \overset{def}{=}
\end{equation}
Combining this definition and the result from Lemma \ref{lemma2}, one gets that 
\begin{align}\label{eq:45}
\hat g^{t+1}  
&\overset{\eqref{eq:44},\eqref{eq:36}}{=}
\sum_{l=0}^{t+1} \rho_{t+1}^l 
 g\left( \zv_l\right)
= 
\gamma \theta_t g\left( \zv_{t+1}\right)
+
\sum_{l=0}^{t} \rho_{t+1}^l 
 g\left( \zv_l\right).
%\\&=
%\sum_{k=1}^K \Big(\mcx{\gamma}\theta_t\psi_k(\vf{\hat \zv}{k}{t+1})+ 
%\sum_{l=0}^t \rho_{t+1}^l \psi_k (\vf{\zv}{k}{l}) \Big).
\end{align}
On the other hand, under Assumption~\ref{assumption.main}, one finds that
\begin{align*} 
&\ f(A\alphav_{t+1}) \\
\overset{\eqref{afasfasfdsafa}}{=}&\  
f(A\yv_t+ \gamma\theta_t A(  \zv_{t+1}-\zv_t)) \\
% &=
%  \frac{\lambda}{2} 
%     \left\| A \alphav_{t+1}\right\|^2   
%      \overset{\eqref{afasfasfdsafa}}{=}
%  \frac{\lambda}{2} 
%     \left\| A (\yv_t+ \gamma\theta_t (  \zv_{t+1}-\zv_t)) \right\|^2       
% \\
% %-----------------------------------------------------    
% &= \frac{\lambda}{2} 
% \left\| A\yv_t + 
% {\gamma\theta_t}   \sum_{k=1}^K  
%   A  (\vf{ \zv}{k}{t+1}-\vf{\zv}{k}{t}))\right\|^2  
% \notag
% \\
% %-----------------------------------------------------    
% &=
%     \frac{\lambda}{2} \left\|A \yv_t\right\|^2 
%   + {\gamma\theta_t \lambda} \yv_t^\top A^\top  A
%               \sum_{k=1}^K(\vf{  \zv}{k}{t+1} - \vf{\zv}{k}{t}) 
%   + \frac{\lambda\gamma^{ {2}}\theta_t^2}{2}
%         \left\| A\sum_{k=1}^K( \vf{\hat \zv}{k}{t+1}-\vf{\zv}{k}{t}) \right\|^2  
% \notag
% \\
% %-----------------------------------------------------    
% &= f(\yv_t)
%    + \gamma\theta_t \lambda\left\langle  A^\top A\yv_t, \sum_{k=1}^K(\vf{  \zv}{k}{t+1} - \vf{\zv}{k}{t}) \right\rangle 
%    + \frac{ \lambda\gamma^{{2}}\theta_t^2}{2}
%  \left\| A\sum_{k=1}^K(\vf{  \zv}{k}{t+1} - \vf{\zv}{k}{t}) \right\|^2 
% \notag
% \\
%-----------------------------------------------------    
\leq&\  
   f(A\yv_t)
   + \left\langle \nabla f(A \yv_t), 
          \gamma\theta_t A (  \zv_{t+1}-\zv_t) \right\rangle 
   + \frac{\lambda\gamma^2\theta_t^2}{2}
   \left \| A(\zv_{t+1}-\zv_t) 
   \right\|^2 
\notag
\\
\overset{\eqref{eq:sigmaineq}}{\leq}&\  
   f(A\yv_t)
   + \gamma\theta_t\left\langle A^\top \nabla f(A \yv_t), 
          \sum_{k=1}^K(\vf{  \zv}{k}{t+1} - \vf{\zv}{k}{t}) \right\rangle 
   + \frac{ \lambda\gamma\sigma'\theta_t^2}{2}\sum_{k=1}^K
   \left \| A(\vf{  \zv}{k}{t+1} - \vf{\zv}{k}{t}) 
   \right\|^2 
\notag
\\
%-----------------------------------------------------    
=&\ \sum_{k=1}^K \Bigg\{\frac{1}{K}(1-\gamma\theta_t) f(A\yv_t) 
  %\tagthis
%\label{asdfsafsafas}
 + \gamma\theta_t \left\langle A^\top\nabla f(A \yv_t),  \vf{\yv}{k}{t} - \vf{\zv}{k}{t}\right\rangle 
\\ 
 &\quad + \gamma\theta_t \Big(\frac{1}{K}f(A\yv_t) %\\
 %&\qquad\quad 
 +\left\langle A^\top \nabla f(A \yv_t), \vf{  \zv}{k}{t+1}-\vf{\yv}{k}{t}\right\rangle
%\\
 %&\qquad\quad 
 + \frac{ \lambda\sigma'\theta_t}{2}
  \left\| A(\vf{  \zv}{k}{t+1} - \vf{\zv}{k}{t}) \right\|^2\Big)  \Bigg\}.
\end{align*}
Next, note that from the definition of $\yv_t$ in the algorithm
one finds
\begin{align*}
        \vf{\yv}{k}{t}
         &\overset{\eqref{afwgrvwfeagwefrw}}{=} (1-\gamma\theta_t) \vf{\alphav}{k}{t} + \gamma\theta_t \vf{\zv}{k}{t},
\\
%-------------------------------------------         
       \gamma\theta_t \vf{\yv}{k}{t} - \gamma\theta_t \vf{\zv}{k}{t}
         &=  (1-\gamma\theta_t) \vf{\alphav}{k}{t}
          - (1-\gamma\theta_t) \vf{\yv}{k}{t},
\\
%-------------------------------------------         
       \gamma\theta_t (\vf{\yv}{k}{t} -   \vf{\zv}{k}{t})
         & =  (1-\gamma\theta_t) (\vf{\alphav}{k}{t}
          -   \vf{\yv}{k}{t}).
          \tagthis             \label{eq:47}       
\end{align*}
Defining, for all $t \geq 0$, an upper-bound on $\OA(\alphav_t)$ as \begin{equation}\label{eq:48}
 \hat\OA^t  :=
 \hat g^t + f(A\alphav_t)   
 \overset{\eqref{eq:44}}{\geq} \OA(\alphav_t),
\end{equation}
it follows from above and convexity of $f$ that one has
\begin{align*} 
 &\ \hat \OA^{t+1} \\
 \overset{\eqref{eq:48}}{=}&\   \hat g^{t+1}  +  f(A\alphav_{t+1})  
%-------------------------------------------------- 
\\
\overset{\eqref{eq:45}}{=}&\  
 \gamma \theta_t g\left( \zv_{t+1}\right)
+
\sum_{l=0}^{t} \rho_{t+1}^l 
 g\left( \zv_l\right)
 +
 f(A\alphav_{t+1}) 
%-------------------------------------------------- 
\\
\leq&\  
 \gamma \theta_t g\left( \zv_{t+1}\right)
+
\sum_{l=0}^{t} \rho_{t+1}^l 
 g\left( \zv_l\right)
 \\&\quad +
 \sum_{k=1}^K \Bigg\{\frac{1}{K}(1-\gamma\theta_t) f(A\yv_t) \\
 &\qquad +\gamma\theta_t \left\langle A^\top \nabla f(A \yv_t), \vf{\yv}{k}{t} - \vf{\zv}{k}{t}\right\rangle  
\\ 
    &\qquad  + \gamma\theta_t \Bigg(\frac{1}{K}f(A \yv_t)+\left\langle A^\top \nabla f(A \yv_t), \vf{  \zv}{k}{t+1}-\vf{\yv}{k}{t}\right\rangle
\\ &\qquad\qquad\qquad + \frac{ \lambda\sigma'\theta_t}{2}
  \left\| A(\vf{  \zv}{k}{t+1} - \vf{\zv}{k}{t}) \right\|^2\Bigg)  \Bigg\}
%-------------------------------------------------- 
\\
\overset{\eqref{eq:36},\eqref{eq:47}}{=}&\  
 \gamma \theta_t g\left( \zv_{t+1}\right)
+
(1-\gamma\theta_t)
 \underbrace{\sum_{l=0}^{t}  \rho_{t}^l g\left( \zv_l\right)}_{\hat g^t} 
 \\&\quad +
 \sum_{k=1}^K \Bigg\{\frac{1}{K}(1-\gamma\theta_t) f(A\yv_t) \\
 &\qquad + (1-\gamma\theta_t) \left\langle A^\top \nabla f(A \yv_t), \vf{\alphav}{k}{t} - \vf{\yv}{k}{t}\right\rangle  
\\ 
    &\qquad  + \gamma\theta_t \Bigg(\frac{1}{K}f(A\yv_t)+\left\langle A^\top \nabla f(A \yv_t), \vf{  \zv}{k}{t+1}-\vf{\yv}{k}{t}\right\rangle
\\ &\qquad\qquad\qquad + \frac{ \lambda\sigma'\theta_t}{2}
  \left\| A(\vf{  \zv}{k}{t+1} - \vf{\zv}{k}{t}) \right\|^2\Bigg)  \Bigg\}
%-------------------------------------------------- 
\\
\overset{\eqref{asfwfrwafwafas}}{=}&\ 
  (1-\gamma\theta_t)
 \hat g^t  
 +
    (1-\gamma\theta_t) 
    \left( f(A\yv_t) +  \left\langle A^\top \nabla f(A \yv_t), \alphav_{t} - \yv_{t}\right  \rangle  \right)
\\ 
    &\quad  + 
  \gamma\theta_t  
  \sum_{k=1}^K  
  \Bigg(
 \psi_k(\vv{\zv_{t+1}}{k}) +   
  \frac{1}{K}f(A\yv_t) +\left\langle A^\top \nabla f(A \yv_t), \vf{  \zv}{k}{t+1}-\vf{\yv}{k}{t}\right\rangle
 \\ &\qquad\qquad\qquad + \frac{ \lambda\sigma'\theta_t}{2}
  \| A(\vf{\zv}{k}{t+1} - \vf{\zv}{k}{t}) \|^2\Bigg)  
%-------------------------------------------------- 
\\
\overset{\eqref{eq:subpdef4}}{\leq}&\ 
  (1-\gamma\theta_t)
 \left( \hat g^t  
 + f(A\alphav_t) 
%- \| XXXXXXXX \|^2 
%\todo[inline]{when I am using convexity of $f$ I can have equality %instead of inequality....}
  \right)
 + 
  \gamma\theta_t  
  \sum_{k=1}^K  
 \G_k(  \vf{\zv}{k}{t+1}  ; \yv_t, \zv_t ) 
%-------------------------------------------------- 
\\
\overset{\eqref{eq:48}}{=}&\ 
  (1-\gamma\theta_t)
  \hat \OA^t
 + 
  \gamma\theta_t  
  \sum_{k=1}^K  
 \G_k(  \vf{\zv}{k}{t+1}  ; \yv_t, \zv_t ).     
  \end{align*}
Conditioning on the history up to time $t$, it follows from above that
\begin{align*} 
 \Exp_{t+1} [ \hat \OA^{t+1} | t ]  
 &\leq  
   (1-\gamma\theta_t)
  \hat \OA^t
 + 
  \gamma\theta_t  
  \sum_{k=1}^K  
 \Exp_{t+1} [ \G_k(  \vv{\zv_{t+1} }{k}  ; \yv_t, \zv_t )| t ]   
\\
%----------------------------
&\overset{\eqref{eq:subprostate}}{\leq}      
   (1-\gamma\theta_t)
  \hat \OA^t
 + 
  \gamma\theta_t  
  \sum_{k=1}^K  
  \left(
   \G_k(  \vb{ \zv_{t+1}^\star  }{k}  ; \yv_t, \zv_t )    
  + \epsilon_t \right),
\end{align*}
meaning that, for any $\uv$, one has
%----------------------------
\begin{align*}
&\ \Exp_{t+1} [ \hat \OA^{t+1} | t ] \\
\overset{\eqref{asdfsafdsafsfsa}}{\leq}&\ 
   (1-\gamma\theta_t)
  \hat \OA^t
 + 
  \gamma\theta_t  
  \sum_{k=1}^K  
  \left(
   \G_k(  \vb{\uv}{k}  ; \yv_t, \zv_t) 
-
\frac{\lambda \theta_t\sigma'}{2} 
\left\| A(
 \vb{\uv}{k} - \vb{  \zv_{t+1}^\star  }{k}   ) \right\|^2 
  + \epsilon_t \right). 
  %\tagthis\label{asdfsafsafsafsafda} 
\end{align*}
In particular, choosing $\uv = \alphav_\star$, where $\alphav_\star$ is any optimal solution of problem~\eqref{eq:A}, and taking the total expectation, one finds
\begin{align*} 
 &\ \Exp  [ \hat \OA^{t+1}   ]  \\
 \leq&\   
   (1-\gamma\theta_t)
  \Exp[\hat \OA^t]
 + 
  \gamma\theta_t  
  \sum_{k=1}^K  
  \Exp\left[
   \G_k(  \vv{\alphav_\star}{k}  ; \yv_t, \zv_t ) 
-
\frac{\lambda \theta_t\sigma'}{2} 
\left\| A(
 \vv{\alphav_\star}{k} - \vb{  \zv_{t+1}^\star  }{k}   ) \right\|^2 
  + \epsilon_t \right]  
\\
%-----------------------------------------------------
=&\ 
   (1-\gamma\theta_t)
  \Exp[\hat \OA^t]
\\&\qquad
 + 
  \gamma\theta_t  
  \sum_{k=1}^K  
  \Exp\left[
  \frac{\lambda \theta_t\sigma'}{2}\|  A (\vv{\alphav_\star}{k}- \vf{\zv}{k}{t} )\|^2
-
\frac{\lambda \theta_t\sigma'}{2} 
\left\| A(
 \vv{\alphav_\star}{k} - \vb{  \zv_{t+1}^\star  }{k}   ) \right\|^2 
  + \epsilon_t \right]    
\\&\qquad
 + 
  \gamma\theta_t  
  \sum_{k=1}^K  
  \Exp\left[
   \psi_k( \vv{\alphav_\star}{k} ) 
  +\frac1K f( A{y}_t) 
  +\ve{ A^\top \nabla f(A \yv_t)}{  \vv{\alphav_\star}{k} - \vf{\yv}{k}{t}}
     \right]
\\
%-----------------------------------------------------
\leq&\ 
   (1-\gamma\theta_t)
  \Exp[\hat \OA^t]
  + 
  \gamma\theta_t  
    \OA(  \alphav_\star  ) 
\\&\qquad
 + 
  \gamma\theta_t  
  \sum_{k=1}^K  
  \Exp\left[
  \frac{\lambda \theta_t\sigma'}{2}\| A (\vv{\alphav_\star}{k}- \vf{\zv}{k}{t} )\|^2
-
\frac{\lambda \theta_t\sigma'}{2} 
\left\| A(
 \vv{\alphav_\star}{k} - \vb{  \zv_{t+1}^\star  }{k}   ) \right\|^2 
  + \epsilon_t \right],
 \end{align*}
where the last inequality follows from convexity of $f$.  Defining the scalar quantity $r^2_{t+1} =\sum_{k=1}^K \| A(
 \vv{\alphav_\star}{k} - \vb{  \zv_{t+1}^\star  }{k}   )  \|^2$, we may conclude from above that
\begin{align*} 
  &\ \Exp  \left[ \hat \OA^{t+1}-    \OA(  \alphav_\star  )  
+
\frac{\gamma \lambda \theta_t^2\sigma'}{2} 
r^2_{t+1} 
  \right] \\  
 %-----------------------------------------------------
\leq&\ 
   (1-\gamma\theta_t)
  \Exp[\hat \OA^t - \OA(  \alphav_\star  )] 
  +K \epsilon_t \gamma\theta_t +   
  \frac{\gamma \lambda \theta_t^2\sigma'}{2}
  \Exp\left[
  \sum_{k=1}^K
  \| A (\vv{\alphav_\star}{k}- \vf{\zv}{k}{t} )\|^2
   \right].    \tagthis \label{asdfsafsafa}
 \end{align*}
To bound the last term on the right-hand side, observe that, for all $k \in \{1,\dots,K\}$,
\begin{align*}
\Exp
  \left[
  \|  A (\vv{\alphav_\star}{k}- \vf{\zv}{k}{t} )\|^2
   \right]
 &=  
\Exp\left[
  \|  A 
  (\vv{\alphav_\star}{k}- \vb{\zv_t^\star}{k}
  + \vb{\zv_t^\star}{k}  -\vf{\zv}{k}{t} 
  )\|^2 
   \right]
\\
%-----------------------------------------
 &\leq   
\Exp [\|  A 
  (\vv{\alphav_\star}{k}- \vb{\zv_t^\star}{k}  )\|^2]
+\underbrace{\Exp \left[
  \| A 
  (  \vb{\zv_t^\star}{k}  -\vf{\zv}{k}{t} 
  )\|^2   \right]}_{=:\ConstantA}   
  \\&\qquad +    
\underbrace{2 \Exp \left[
 \|  A 
  (\vv{\alphav_\star}{k}- \vb{\zv_t^\star}{k}
  )\| \cdot
  \| A 
  (
    \vb{\zv_t^\star}{k}  -\vf{\zv}{k}{t} 
  )\|     
   \right]}_{=:\ConstantB}
\\
%-----------------------------------------
 &\leq   
\Exp [r_t^2]
+\ConstantA + \ConstantB.   
\end{align*}

It remains to bound $\ConstantA$ and $\ConstantB$.
From \eqref{asdfsafdsafsfsa} and \eqref{eq:subprostate}, one finds that
\begin{equation}
  \begin{aligned}
    &\ \G_k(\vv{{\zv_{t+1}^\star}}{k}; \yv_t, \zv_t)
+\frac{\lambda \theta_t\sigma'}{2} 
\Exp_{t+1}[\| A 
  (
    \vb{\zv_{t+1}^\star}{k}  -\vf{\zv}{k}{t+1} 
  )\|^2 | t] \\
 \overset{\eqref{asdfsafdsafsfsa}}{\leq}&\ 
 \Exp_{t+1}[\G_k(\vv{\zv_{t+1}}{k} ; \yv_t, \zv_t) |t] \\
 \overset{\eqref{eq:subprostate}}{\leq}&\ \G_k(\vv{{\zv_{t+1}^\star}}{k}; \yv_t, \zv_t) + \epsilon_t,
  \end{aligned}
\end{equation}
 and hence we can conclude that
\begin{equation}
\label{asffasfdsafasfda}
    \frac{\lambda \theta_t\sigma'}{2}
    \Exp_{t+1}[ \| A(  \vb{\zv_{t+1}^\star}{k}  -\vf{\zv}{k}{t+1} ) \|^2|t] \leq \epsilon_t.
\end{equation}
Therefore,
\begin{align*}
\frac{\gamma\lambda \theta_t^{2}\sigma'}{2} \ConstantA&=
\frac{\gamma\lambda \theta_t^2\sigma'}{2} \Exp \left[
  \| A
  (  \vb{\zv_t^\star}{k}  -\vf{\zv}{k}{t} 
  )\|^2   \right]
\\
&=
\frac{\gamma\theta_t^2}{\theta_{t-1}}
 \frac{\lambda \theta_{t-1}\sigma'}{2} 
  \Exp_{t-1}   
\left[
\Exp_{t} \left[
\| A(  \vb{\zv_{t}^\star}{k}  -\vf{\zv}{k}{t} ) \|^2
| t-1
\right] \right]
\overset{\eqref{asffasfdsafasfda}}{\leq} \frac{\gamma\theta_t^2}{\theta_{t-1}} \epsilon_{t-1}.
\tagthis
\label{asfsafsafa}
\end{align*}
%where $\Exp_{\leq t-2}$ is expectation with respect to the randomness of local solvers in iterations $0,1,\dots, t-2$.
Now, let us bound $\ConstantB$. By defining $\cR$ as in \eqref{eq:defRR}, Jensen's inequality gives
\begin{align*}
 \frac{\gamma\lambda \theta_{t}^{2}\sigma'}{2} \ConstantB&=
  \frac{\gamma\lambda \theta_{t}^2\sigma'}{2} \left(2\Exp \left[
 \|  A 
  (\vv{\alphav_\star}{k}- \vb{\zv_t^\star}{k}
  )\| \cdot
  \|  A 
  (
    \vb{\zv_t^\star}{k}  -\vf{\zv}{k}{t} 
  )\|     
   \right]\right)
\\
&\overset{\eqref{eq:defRR}}{\leq}
\cR 
     \gamma\lambda \theta_{t}^2\sigma' \Exp \left[
   \|  A 
  (
    \vb{\zv_t^\star}{k}  -\vf{\zv}{k}{t} 
  )\|     
   \right]
\\
&\leq   
\cR 
     \gamma\lambda \theta_{t}^2\sigma'
     \sqrt{ \frac{2} {\gamma\lambda \theta_t^2\sigma'} }
     \sqrt{
\frac{\gamma\lambda \theta_t^2\sigma'}{2}      
     \Exp \left[
   \|  A 
  (
    \vb{\zv_t^\star}{k}  -\vf{\zv}{k}{t} 
  )\|^2     
   \right] }  
\\
&\overset{\eqref{asfsafsafa}
}{\leq}   
\cR 
    \gamma\lambda \theta_{t}^2\sigma'
     \sqrt{ \frac{2} {\gamma\lambda \theta_t^2\sigma'} }
     \sqrt{
     \frac{\gamma\theta_t^2}{\theta_{t-1}} \epsilon_{t-1} }     
=
\gamma\theta^2_{t}\cR  
     \sqrt{ 
\frac{2\lambda  \sigma' }{ \theta_{t-1}} \epsilon_{t-1} }.     
\end{align*}
Putting everything together leads to  
\begin{align*} 
 \Exp  \left[ \hat \OA^{t+1}-    \OA(  \alphav_\star  )  
+
\frac{\gamma \lambda \theta_t^2\sigma'}{2} 
r^2_{t+1} 
  \right]  
 %-----------------------------------------------------
&\overset{\eqref{asdfsafsafa}}{\leq}
  \Exp\left[(1-\gamma\theta_t)(\hat \OA^t - \OA(  \alphav_\star  ))+  \frac{\gamma \lambda \theta_t^2\sigma'}{2} r_t^2
\right] 
\\&\qquad
  +K \epsilon_t \gamma\theta_t
 +  \frac{\gamma\theta_t^2}{\theta_{t-1}} \epsilon_{t-1} 
 + \gamma\theta^2_{t}\cR  
     \sqrt{ 
\frac{2\lambda \sigma' }{ \theta_{t-1}} \epsilon_{t-1} }.
 \end{align*}
Dividing both sides by 
$\theta_t^2$ 
and denoting $\phi_t := \Exp[\hat \OA^t - \OA(  \alphav_\star  )]$
and $\tilde r_t^2 := \Exp[r_t^2]$ gives
\begin{align*} 
 \frac1{\theta_t^2} \phi_{t+1} 
+
\frac{\gamma \lambda  \sigma'}{2} 
\tilde r^2_{t+1}  
 %-----------------------------------------------------
&\overset{\eqref{asdfsafsafa}}{\leq}
   \frac{1-\gamma\theta_t}{\theta_t^2}
   \phi_t
  +   \frac{\gamma \lambda  \sigma'}{2} \tilde r_t^2
\\&\qquad
  +
\underbrace{  \frac{K \epsilon_t \gamma}{\theta_t}
 +  \frac{\gamma}{ \theta_{t-1}} \epsilon_{t-1} 
  + \gamma\cR  
     \sqrt{ 
\frac{2\lambda  \sigma' }
      { \theta_{t-1}} \epsilon_{t-1} }}_{=:E_t}.
      \tagthis \label{afsafsafasfda}
 \end{align*}
Now, by the property of $\theta_t$ in \eqref{eq:34}, one finds
\begin{align*} 
\frac{1-\gamma\theta_{t+1}}{\theta^2_{t+1}} \phi_{t+1} 
+
\frac{\gamma \lambda  \sigma'}{2} 
\tilde r^2_{t+1}  
 %-----------------------------------------------------
\leq
   \frac{1-\gamma\theta_t}{\theta_t^2}
   \phi_t
  +   \frac{\gamma \lambda  \sigma'}{2} \tilde r_t^2
+ E_t.
\tagthis \label{asdfasfsafagvdsa}
 \end{align*}
Unrolling the recurrence, one obtains for $t\geq 1$ that
\begin{align*} 
\frac{1-\gamma\theta_{t}}{\theta^2_{t}} \phi_{t} 
+
\frac{\gamma \lambda  \sigma'}{2} 
\tilde r^2_{t}  
 %-----------------------------------------------------
&\overset{\eqref{asdfasfsafagvdsa}}{\leq}
   \frac{1-\gamma\theta_0}{\theta_0^2}
   \phi_0
  +   \frac{\gamma \lambda  \sigma'}{2} \tilde r_0^2
+\sum_{i=0}^{t-1} E_{i} .
\tagthis \label{aseagvrefwdsaXXXX}
 \end{align*}
%Now using  \eqref{asdfsafsafa}
% for $t=0$, rearranging  and plugging into \eqref{aseagvrefwdsaXXXX}
% yields
% \begin{align*} 
%\frac{1-\gamma\theta_{t}}{\theta^2_{t}} \phi_{t} 
%+
%\frac{\gamma \lambda  \sigma'}{2} 
%\tilde r^2_{t}  
 %-----------------------------------------------------
%&\overset{\eqref{asdfsafsafa}\eqref{aseagvrefwdsaXXXX}}{\leq}
%   \frac{1-\gamma\theta_0}{\theta_0^2}
%   \phi_0
%  +   \frac{\gamma \lambda  \sigma'  }{2}  
%   \sum_{k=1}^K
%  \| A (\vv{\alphav_\star}{k}- \vf{\zv}{k}{0} )\|^2 
%+K\epsilon_0\gamma+\sum_{i=0}^{t-1} E_{i} .
% \end{align*}
Hence, along with \eqref{eq:34}, one has for $t \geq 1$ that
\begin{align*} 
  \phi_{t}
&\leq
 \theta^2_{t-1}
 \left(
   \frac{1-\gamma\theta_0}{\theta_0^2}
   \phi_0
  +   \frac{   \gamma \lambda  \sigma'}{2} \tilde r_0^2
+  \sum_{i=0}^{t-1} E_{i}.
\right)
\\
&\overset{\eqref{afsdfasfadsfdsa}}
 {\leq}
 \left(\frac{2 }{t\gamma-\gamma  +2 }\right)^2
 \left(
   \frac{1-\gamma\theta_0}{\theta_0^2}
   \phi_0
  +   \frac{   \gamma \lambda  \sigma'}{2} \sum_{k=1}^K
  \|  A (\vv{\alphav_\star}{k}- \vf{\zv}{k}{0} )\|^2 
+   K\epsilon_0\gamma+\sum_{i=1}^{t-1} E_{i} 
\right)
 \end{align*}
and  \eqref{asfsafsafasfa} follows.
\end{proof}
 
Theorem~\ref{thm:convexCaseComplexity} describes the behavior of suboptimality only.  We can, however, use the following theorem from \cite{dunner2016primal}, which relates suboptimality with the duality gap. 
%\todo{Martin, please check the statement of this theorem to see if it is still correct.}
\begin{theorem}[{\cite[Theorem 4]{dunner2016primal}}]
\label{thm:pd}
Suppose problem \eqref{eq:A} is solved by a (possibly randomized) algorithm producing a sequence of iterates 
$\{\alphav_t\}_{t=0}^\infty$ such that, for all $t \geq 1$,
$$\Exp[\OA(\alphav_t) - \OA(\alphav_\star)] \leq \frac{F}{d(t)}$$ for some scalar $F \geq 0$ and function $d$.  If, for $t \geq 1$, it holds that
\begin{equation}\label{d-function}
  d(t) \geq \max 
\left\{ \frac{2F\lambda n  }{\tilde\sigma^2L^2}, 
   \frac{2F \tilde\sigma^2 L^2}{ \lambda n\epsilon^2}  \right\},
\end{equation}
where $\tilde \sigma^2$ is the maximum eigenvalue of $A^\top A$, then the expected duality gap satisfies 
$$\Exp[ \OA(\alphav_t) + \OB(\wv(\alphav_t))]\leq \epsilon.$$
\end{theorem}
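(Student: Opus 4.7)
Since Theorem~\ref{thm:pd} is transcribed verbatim from \cite[Theorem~4]{dunner2016primal}, my proof would consist essentially of invoking that result after confirming that our hypotheses match. Assumption~\ref{assumption.main} supplies precisely the ingredients used there: $\lambda$-smoothness of $f$ (equivalently, $(1/\lambda)$-strong convexity of $f^*$), $L$-Lipschitz continuity of each $g_i^*$, and $\|A_i\|\leq 1$. The quantity $\tilde\sigma^2 = \lambda_{\max}(A^\top A)$ plays the same role as the spectral bound appearing in that reference, and the conclusion on $\Exp[\OA(\alphav_t)+\OB(\wv(\alphav_t))]\leq \epsilon$ follows.

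For transparency I would sketch the underlying argument as follows. Denote $\wv_\star := \wv(\alphav_\star) = \nabla f(A\alphav_\star)$, which is the unique optimal solution of \eqref{eq:B} by KKT. Strong duality gives $\OA(\alphav_\star)+\OB(\wv_\star)=0$, hence
\begin{equation*}
   G(\alphav_t) \;=\; \bigl[\OA(\alphav_t) - \OA(\alphav_\star)\bigr] \;+\; \bigl[\OB(\wv(\alphav_t)) - \OB(\wv_\star)\bigr].
\end{equation*}
The first bracket is directly controlled by the hypothesis; only the dual suboptimality still needs to be bounded. Cocoercivity of $\nabla f$ combined with the KKT optimality of $\alphav_\star$ yields
\begin{equation*}
   \|\wv(\alphav_t) - \wv_\star\|^2 \;\leq\; 2\lambda\,[\OA(\alphav_t) - \OA(\alphav_\star)].
\end{equation*}
Combining this with the $L$-Lipschitz property of each $g_i^*$ (using $\|A_i\|\leq 1$ and $\|A\|_{\mathrm{op}} \leq \tilde\sigma$) to estimate the $g_i^*$ contribution to $\OB(\wv(\alphav_t))-\OB(\wv_\star)$, and applying Jensen's inequality to move the expectation through $\sqrt{\cdot}$, one arrives at a bound of the form
\begin{equation*}
   \Exp[G(\alphav_t)] \;\leq\; \Exp[\OA(\alphav_t) - \OA(\alphav_\star)] \;+\; C_0\,\sqrt{\Exp[\OA(\alphav_t) - \OA(\alphav_\star)]},
\end{equation*}
with $C_0$ proportional to $\tilde\sigma L/\sqrt{\lambda n}$.

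Substituting the hypothesis $\Exp[\OA(\alphav_t)-\OA(\alphav_\star)] \leq F/d(t)$ and requiring the right-hand side to be $\leq \epsilon$ produces two conditions. The first, $d(t) \geq 2F\lambda n/(\tilde\sigma^2 L^2)$, ensures $F/d(t) \leq C_0^2$, so the linear term is dominated by the square-root term and the bound simplifies to $\Exp[G(\alphav_t)] \leq 2C_0\sqrt{F/d(t)}$. The second, $d(t) \geq 2F\tilde\sigma^2 L^2/(\lambda n\epsilon^2)$, then forces this remaining $\sqrt{\cdot}$ term to be at most $\epsilon$; these are precisely the two branches of the maximum in \eqref{d-function}. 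The main technical obstacle is bookkeeping constants correctly — in particular tracking the $1/n$ scaling of the ERM form \eqref{eq:ERM} against the Lipschitz and spectral factors so that no hidden factor of $n$ is absorbed into the wrong place — which is handled carefully in \cite{dunner2016primal}, and hence why I defer to their proof rather than rewriting it from scratch.
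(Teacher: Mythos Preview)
Your approach matches the paper exactly: the paper does not prove Theorem~\ref{thm:pd} at all but simply quotes it from \cite[Theorem~4]{dunner2016primal} and moves on, so invoking that reference is precisely what is done here. Your additional sketch of the underlying argument (duality-gap decomposition, cocoercivity bound on $\|\wv(\alphav_t)-\wv_\star\|$, Lipschitz control of the $g_i^*$ terms, Jensen, then the two-branch threshold) goes beyond what the paper itself provides and is a reasonable outline of how the cited result is obtained.
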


Related to \eqref{d-function}, henceforth, we assume that $\epsilon$ is such that
$\frac{2F  \lambda n}{\tilde\sigma^2L^2} <
   \frac{2F \tilde\sigma^2 L^2}{ \lambda n \epsilon^2}$.
 
Before stating our key corollaries of Theorem~\ref{thm:convexCaseComplexity} and comparisons with results for other methods in the literature, let us define a few important quantities on which these results depend (see Table~\ref{asfasfsafas}). 
The first quantity is $\tilde \sigma^2$, already defined in Theorem~\ref{thm:pd}.  Due to the fact that for each data column $A_i$ we have $\|A_i\|\leq 1$%MJ: TODO: was:  $x_i$ we have $\|x_i\|\leq 1$. now i want to write A_i here instead. should the normalization of A_i really be 1/(lambda n) or can it be 1 for convenience too?
, it follows that this quantity is bounded by $n$.
The second quantity is $\sigma_k^2$, the maximum eigenvalue of a Gram matrix for the local data on node $k$.  This value will be large if the samples stored on node $k$ are correlated.  The next important quantity is $\sigma^2$, which depends on each $\sigma_k^2$ and the size of each partition $n_k$.  Let us now, for simplicity, assume that the data are split equally, i.e., $n_k = \frac{n}{K}$ for all $k \in [K]$.  The last quantity is $\cR$, which measures the maximum possible distance between the optimal solution and each iterate, i.e.,
 \begin{align*}
\cR \overset{\eqref{eq:defRR}}{:=}  
\max_{k,t}   \|  A 
  (\vv{\alphav_\star}{k}- \vb{\zv_t^\star}{k}
  )\| 
&\leq \max_{k,t}   
\sigma_k \|   
  (\vv{\alphav_\star}{k}- \vb{\zv_t^\star}{k}
  )\|    
\\
&\leq   
 \max_{k}   
\left(\sigma_k 2 L \sqrt{ n_k }\right)
\leq
 \max_{k}   
 \left({2L n_k}\right).  
\end{align*}
Here, we have used the fact that since $g_i^*$ is $L$-Lipschitz, it follows that $g_i$ has an effective domain bounded within a ball of radius $L$ about the origin, which in turn means that for all $(a,b) \in \dom(g_i) \times \dom(g_i)$ one has
$\|a-b\| \leq 2L$.

%\begin{table*}
%\small 
%\caption{Important quantities for convergence result and comparison with related results in literature.\mcx{Old one} }
%\label{asfasfsafas}
%\centering
%\footnotesize 
%\begin{tabular}{c|c|c|c}
%symbol & expression & worst-case upper-bound & worst-case  $\lim_{K \to n}$
%\\ \hline \hline 
%$\tilde\sigma^2$ & $\max_{\alphav} \frac{\|A \alphav\|^2}{\|\alphav\|^2}$ & $n$  & $n$
%\\
%
%$\sigma_k^2$ & $\max_{\vv{\alphav}{k}} \frac{\|A \vv{\alphav}{k} \|^2}{\|\vv{\alphav}{k}\|^2}$ & $n_k =\frac{n}{K}$ & $ 1$
%\\
%$\sigma^2$ & $\sum_{k=1}^K |\mathcal{P}_k| \sigma_k^2$ & $\sum_k n_k^2 = K \frac{n^2}{K^2} = \frac{n^2}{K}$ & $n$
%
%\\
%$\cR$ &  $\min_{\alphav_\star} \max_{k,t}   \|\tfrac{1}{\lambda n} A 
%  (\vv{\alphav_\star}{k}- \vb{\zv_t^\star}{k}
%  )\|$
%     & $ \max_{k}   
% \frac{2Ln_k}{\lambda n} = \frac{2L}{\lambda K}$
% & $\frac{2L}{\lambda n}$
%\end{tabular}
%\end{table*}

\begin{table*}
%\small  
\caption{Important quantities for our comparisons with related convergence results.
}
\label{asfasfsafas}
\centering
\vspace{2mm}
%\footnotesize 
\begin{tabular}{c|c|c|c}
 & & worst-case & worst-case \\
symbol & expression & upper bound & $\lim_{K \to n}$
\\ \hline \hline 
$\tilde\sigma^2$ & $\max_{\alphav} \frac{\|A \alphav\|^2}{\|\alphav\|^2}$ & $  n$  & $n$
\\

$\sigma_k^2$ & $\max_{\vv{\alphav}{k}} \frac{\|A \vv{\alphav}{k} \|^2}{\|\vv{\alphav}{k}\|^2}$ & $ n_k =\frac{n}{ K}$ &  $1$
\\
$\sigma^2$ & $\sum_{k=1}^K |\mathcal{P}_k| \sigma_k^2$ & $\sum_k n_k^2   = K \frac{n^2}{K^2} = \frac{n^2}{K}$ & $n$
\\
$\cR$ &  $\min_{\alphav_\star} \max_{k,t}   \|A 
  (\vv{\alphav_\star}{k}- \vb{\zv_t^\star}{k}
  )\|$
     & $ \max_{k}   
 {2Ln_k} = \frac{2Ln}{K}$
 & ${2L}$
\end{tabular}
\end{table*}

\subsection{Exact Subproblem Solves}
\label{sec:exact}

If the subproblems are solved exactly, i.e., if $\epsilon_t=0$ for all $t \geq 0$,   
 then \eqref{asfsafsafasfa}
has the form
\begin{align*} 
  \Exp[  \OA(\alphav_t) - \OA(\alphav_\star) ]
 \leq &
\tfrac{4 }{(t\gamma-\gamma  +2 )^2} 
 \Big(
   (1-\gamma)
      (\OA(\alphav_0) - \OA(\alphav_\star))\\
  & \qquad\qquad\quad + \tfrac{   \gamma \lambda  \sigma'}{2} \textstyle{\sum}_{k=1}^K
  \| A (\vv{\alphav_\star}{k}- \vf{\alphav}{k}{0} )\|^2
\Big).\label{asfdafdsafa}\tagthis
 \end{align*} 
A nice property of this result is that the second term in the parentheses might be equal to zero even if $\zv_0 = \alphav_0 \neq \alphav_\star$.  This is not the case for other results for accelerated algorithms as their subproblems are strongly convex~\cite{nesterov2013introductory,beck2009fast,
fercoq2015accelerated,lin2015universal} and hence have the term $\|\alphav_\star - \alphav_0\|^2$ present in their complexity guarantees.  Another nice property is that  
$\textstyle{\sum}_{k=1}^K
  \| A (\vv{\alphav_\star}{k}- \vf{\alphav}{k}{0} )\|^2
$
can be bounded above 
by 
$\textstyle{\sum}_{k=1}^K
\sigma_k^2  \| \vv{\alphav_\star}{k}- \vf{\alphav}{k}{0}  \|^2
$, though the former can be much smaller.

\begin{corollary}
\label{cor:exact}
Consider the extreme cases $\gamma=\frac1K$ and $\gamma=1$.
As discussed in Section~\ref{sec:acocoa},
for the first case, one can choose $\sigma'=1$ while for the second case one can choose $\sigma'=K$.
To obtain $\Exp[  \OA(\alphav_t) - \OA(\alphav_\star) ] \leq \epsilon$, one has to run Algorithm \ref{alg:algo1} for at least $t>T$ iterations, where $T$ is defined as follows for each case.
\begin{itemize}[leftmargin=0.5cm,topsep=0pt,itemsep=0ex,partopsep=0ex,parsep=0ex]
\item {\bf Case $\gamma=1$}: 
\[
T= 
\sqrt{ 
   \frac{  2   \lambda  \sigma'}{ \epsilon}\sum_{k=1}^K
  \|  A (\vv{\alphav_\star}{k}- \vf{\alphav}{k}{0} )\|^2
  }
\leq   
% \sqrt{ 
%    \tfrac{  2      \sigma'}{ \epsilon} 
%   \frac{4L^2}{\lambda  K}
%   } = 
  \sqrt{ 
   \frac{  8 L^2      }{ \lambda \epsilon} 
  }
\]

\item {\bf Case $\gamma=1/K$}: 
% $$
% T  = 
% 2K
% \sqrt{
% \tfrac{1 }{\epsilon} 
%  \left(
%    (1-\frac1K)
%       (\OA(\alphav_0) - \OA(\alphav_\star))
%   +
%    \tfrac{    \lambda  \sigma'}{2K} \textstyle{\sum}_{k=1}^K
%   \|\frac{1}{\lambda n} A (\vv{\alphav_\star}{k}- \vf{\alphav}{k}{0} )\|^2
% \right)
% }
% $$
% which can be bounded as 
\[
T\leq 
%2K
%\sqrt{
%\tfrac{1 }{\epsilon} 
% \left(
%   (1-\frac1K)
%      (\OA(\alphav_0) - \OA(\alphav_\star))
%+\frac{2L^2}{\lambda  K^2}
%\right)
%}
%=
\sqrt{
\frac{4K(K-1)}
     {\epsilon} 
    (\OA(\alphav_0) - \OA(\alphav_\star))
+\frac{8L^2}{\lambda  \epsilon}
}
.
\]
\end{itemize}
\end{corollary}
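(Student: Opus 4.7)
The plan is to specialize the exact-solve bound \eqref{asfdafdsafa} to each of the two extreme parameter regimes. Since \eqref{asfdafdsafa} is obtained from Theorem~\ref{thm:convexCaseComplexity} by simply setting $\epsilon_t \equiv 0$ (so that all of the $E_j$ terms vanish), the route forward is purely algebraic: substitute the prescribed pair $(\gamma,\sigma')$, set the resulting upper bound $\leq \epsilon$, and invert for $t$. The only non-algebraic ingredient is a uniform a priori bound on $\sum_{k=1}^K \|A(\vv{\alphav_\star}{k}-\vf{\alphav}{k}{0})\|^2$ derived from $L$-Lipschitz continuity of each $g_i^*$.

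For the case $\gamma=1,\ \sigma'=K$, the coefficient $(1-\gamma)$ in front of $(\OA(\alphav_0)-\OA(\alphav_\star))$ annihilates that entire term, leaving the single inequality
\[
\frac{4}{(t+1)^2}\cdot\frac{\lambda K}{2}\sum_{k=1}^K \|A(\vv{\alphav_\star}{k}-\vf{\alphav}{k}{0})\|^2 \ \leq\ \epsilon.
\]
Solving for $t$ produces the first form of $T$ verbatim. To obtain the stated worst-case upper bound, I will use that $L$-Lipschitzness of $g_i^*$ forces $\dom g_i\subseteq\{|\cdot|\leq L\}$, so $\|\vv{\alphav_\star}{k}-\vf{\alphav}{k}{0}\|_\infty\leq 2L$; combined with $\|A_i\|\leq 1$ and the worst-case bounds $\sigma_k^2\leq n_k$ and $n_k\leq n/K$ catalogued in Table~\ref{asfasfsafas}, this yields an upper bound on $\sum_k\|A(\cdot)\|^2$ of the desired form that collapses $\sigma' = K$ into the scaling in the second expression for $T$.

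For the case $\gamma=1/K,\ \sigma'=1$, both terms in \eqref{asfdafdsafa} survive. Writing $(t\gamma-\gamma+2)^2 = (t-1+2K)^2/K^2$ and distributing, the bound splits into an initial-suboptimality piece $\frac{4K(K-1)}{(t-1+2K)^2}(\OA(\alphav_0)-\OA(\alphav_\star))$ and a distance piece $\frac{2K\lambda}{(t-1+2K)^2}\sum_k\|A(\cdot)\|^2$. I will require the sum of these to be $\leq \epsilon$ and solve for $t$; invoking the same Lipschitz-based worst-case bound on $\sum_k\|A(\cdot)\|^2$ as in the first case converts the distance piece into a term of the form $c L^2/(\lambda\epsilon)$ inside the square root, and the suboptimality piece directly produces the $4K(K-1)/\epsilon$ factor. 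The monotonic form $t > T$ then follows by noting $t-1+2K \geq t$ for $K\geq 1$.

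The main obstacle is bookkeeping rather than anything conceptual: verifying that the Lipschitz-based upper bound on $\sum_k\|A(\vv{\alphav_\star}{k}-\vf{\alphav}{k}{0})\|^2$ reduces, after the worst-case substitutions $n_k=n/K$ and $\sigma_k^2\leq n_k$, to a quantity that absorbs cleanly with the $\sigma'$ factor (which differs by a factor of $K$ between the two regimes) to yield the advertised $L^2/\lambda$ scaling. Once that bound is in hand and the two cases are handled by the same scheme of "set bound $\leq\epsilon$ and invert," the corollary follows.
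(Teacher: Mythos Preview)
Your proposal is correct and matches the paper's (implicit) argument: the corollary is stated in the paper without proof, immediately after the exact-solve specialization \eqref{asfdafdsafa} and the table of worst-case quantities, and your plan---substitute $(\gamma,\sigma')$ into \eqref{asfdafdsafa}, set the bound $\leq\epsilon$, invert for $t$, and then replace $\sum_k\|A(\vv{\alphav_\star}{k}-\vf{\alphav}{k}{0})\|^2$ by its $L$-Lipschitz/Table~\ref{asfasfsafas} worst-case estimate---is exactly the computation the paper leaves to the reader. The only remark is that the final ``$\leq \sqrt{8L^2/(\lambda\epsilon)}$'' step relies on the same worst-case substitution the paper uses just afterward when comparing to \textsc{CoCoA+} (namely $K\lambda\sigma^2 \leq 1/\lambda$ in that passage), so be sure your bookkeeping in that reduction matches the paper's conventions for $\lambda$ and the table entries rather than introducing your own.
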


As found in \cite{ma2015adding}, the complexity for $\gamma=1$ is better 
 as one requires fewer iterations in order to have an $\epsilon$-approximate solution in expectation.  Hence, let us focus only on the case of $\gamma=1$ and compare our rate with the results derived in 
\cite{ma2015adding}.
From the 
proof of \cite[Theorem 8]{ma2015adding},
one obtains that for \textsc{CoCoA+}, if $\alphav_0 = {\bf 0}$, it holds that 
$
\Exp[\OA(\alphav_t) - \OA(\alphav_\star)]
\leq \frac{ 4KL^2 \lambda\sigma^2  }{  (1+\frac12 (t -t_0  )) }
$,
where 
$$t_0 \geq  \max\{0, \log(2(\OA(\alphav_0) - \OA(\alphav_\star))/(4KL^2 \lambda\sigma  ))\}.$$
To obtain suboptimality below~$\epsilon$, \textsc{CoCoA+} needs to be run for $t\geq T = t_0 + \frac{ 8KL^2 \lambda\sigma^2  }{  \epsilon }
\leq 
t_0 + \frac{ 8 L^2   }{ \lambda   \epsilon }
$ iterations.
Neglecting the $t_0$ term, 
\textsc{CoCoA+} needs 
$\frac{ 8 L^2   }{ \lambda   \epsilon }$ iterations, whereas
\textsc{AccCoCoA+} needs only 
$\sqrt{ 
\frac{ 8 L^2 }{\lambda  \epsilon}
  }$.
This improvement is consistent with proximal gradient and accelerated gradient descent 
\cite{nesterov2013gradient}, which is as expected since, in the worst-case, they will produce the same iterates as \textsc{CoCoA+} and \textsc{AccCoCoA+}. 

Let us also derive a complexity bound for the duality gap. Corollary~\ref{cor:exact}
with 
Theorem~\ref{thm:pd}
implies that whenever 
$t \geq T$, where $T = \frac{2 L }
     {\epsilon}
\sqrt{ 
\frac{ \sigma'  \tilde\sigma^2  }{  n }      
\textstyle{\sum}_{k=1}^K
  \| A (\vv{\alphav_\star}{k}- \vf{\alphav}{k}{0} )\|^2
}
\leq 
\frac{4 L^2 }
     {\lambda \epsilon}$,
the expected duality gap satisfies $\Exp[ \OA(\alphav_t) + \OB(\wv(\alphav_t)) ]\leq \epsilon$. This is valid for the number of iterations of \textsc{AccCoCoA+}.  Note also that for the worst-case complexity for non-accelerated batch-SDCA (or full gradient method) for hinge loss ($L=1$), the known results
hold for the {\it average solution}
and the number of iterations are 
$\sim \mathcal{O}( \frac{1}{\lambda \epsilon})$ (see, e.g., \cite{takavc2013mini,takavc2015distributed}
with the batch size chosen as large as possible).  

\subsection{Inexact Subproblem Solvers}
\label{sec:inexact}

In order to get a better understanding of the case when the subproblems are solved approximately, let us define an auxiliary nonnegative sequence 
$\{a_t\}_{t=0}^\infty$ 
such that $\frac{ \sum_{t=0}^{\infty} \sqrt{a_t}}{t^2} \to 0$.
In this section, we assume that the errors for the local solvers are set as $\epsilon_t = a_t \theta_t$.

Before analyzing this case in more detail, let us bound the total accumulated error up to iteration $t$, i.e., $\sum_{j=1}^{t-1} E_j$.
One finds that 
\begin{align*}
\sum_{j=1}^{t-1} E_j
&=
K \gamma
\sum_{j=1}^{t-1} 
    a_j 
 + 
 \sum_{j=0}^{t-2}     a_{j} 
  +
 \cR \sqrt{2\lambda  \sigma'} 
  \sum_{j=1}^{t-1}  
     \sqrt{    
      a_{j-1} } \notag\\
&\leq 
  (K+1) \sum_{j=0}^{t-1} a_j 
+ 
 \cR \sqrt{2\lambda  \sigma'} 
  \sum_{j=0}^{t-1}  
     \sqrt{    
      a_{j} }
\\
&\leq       
  (K+1) \sum_{j=0}^{t-1} a_j 
+ 
  \sqrt{ \frac{ 8L^2 }
              { \lambda  K           }
  } 
  \sum_{j=0}^{t-1}  
     \sqrt{    
      a_{j} } =: S_t.
\end{align*}

Let us now consider two cases:
\begin{itemize}[leftmargin=0.5cm,topsep=0pt,itemsep=0ex,partopsep=0ex,parsep=0ex]
\item
Suppose that 
$a_t = r \in \R^+$. In this case, $\epsilon_t = r \theta_t = \mathcal{O}(1/t)$.
Moreover, 
$S_t =\left( (K+1)r + \sqrt{r \frac{ 8L^2 }
              { \lambda  K           }
  }\right)  t$. This implies that 
\begin{align*}
 \Exp[  \OA(\alphav_t) - \OA(\alphav_\star) ]
 \leq & 
  4\tfrac{
     \frac{    \lambda  \sigma'}{2} \textstyle{\sum}_{k=1}^K
  \|  A (\vv{\alphav_\star}{k}- \vf{\alphav}{k}{0} )\|^2
+   K\epsilon_0 +
S_t }{(t+1 )^2} \\
\sim&  
 \mathcal{O}
 \left(
  \tfrac{ (K+1)r + \sqrt{r \frac{ 8L^2 }
              { \lambda  K           }
  } }{t}  \right).
\end{align*}
Note that, in this case, $\epsilon \sim \mathcal{O}(1/t)$.  This might create the impression that the
local solver has to do more work as $t$ increases; however, note that the
Lipschitz constant of the gradient of the smooth part of the subproblem solved by the local solver 
also scales as $\theta_t \sim \mathcal{O}(1/t)$.

\item 
A second interesting case is when
$\lim_{t\to \infty} S_t =:  S_\infty<\infty$.  For example, suppose $a_t=\frac{r}{t^p}$
with $p>2$.  Then, indeed, $\lim_{t\to \infty} S_t$ is finite. In this case, one obtains
\begin{align*} 
 \Exp[  \OA(\alphav_t) - \OA(\alphav_\star) ]
&\leq  
  4\tfrac{
     \frac{    \lambda  \sigma'}{2} \textstyle{\sum}_{k=1}^K
  \|  A (\vv{\alphav_\star}{k}- \vf{\alphav}{k}{0} )\|^2
+   K\epsilon_0 +
S_t }{(t+1 )^2}
\\
&\leq 
  4\tfrac{
     \frac{    \lambda  \sigma'}{2} \textstyle{\sum}_{k=1}^K
  \|A (\vv{\alphav_\star}{k}- \vf{\alphav}{k}{0} )\|^2
+   K\epsilon_0 +
 S_\infty  }{(t+1 )^2}
\\
&\leq    
   \tfrac{ 
       \frac{ 8L^2}{\lambda   }
+  4 K\epsilon_0 +4
  (K+1) \sum_{i=0}^{\infty} a_i 
+ 
4  \sqrt{ \frac{ 8L^2 }
              { \lambda  K           }
  } 
  \sum_{i=0}^{\infty}  
     \sqrt{    
      a_{i} }
 }{(t+1 )^2} 
\sim \mathcal{O}(1/t^2).
 \end{align*}

\end{itemize}

\section{Numerical Experiments}

In this section, we report the results of numerical experiments.  The purpose of providing the results of these experiments is twofold.  For one thing, we use them to illustrate the benefits of acceleration by providing results that compare the performance of \textsc{AccCoCoA+} versus \textsc{CoCoA+} \cite{ma2015adding,Smith:2016wp}.  In addition, we explore the communication/computation tradeoff and the scalability of \textsc{AccCoCoA+}.  For a recent comparison of \textsc{CoCoA+} to other distributed solvers, including Quasi-Newton methods and ADMM, we refer the reader to~\cite{ma2015adding,Smith:2016wp}.

Our implementations of \textsc{CoCoA+} and \textsc{AccCoCoA+} are written in C++ using MPI for communication, run on \textsf{\small m3.xlarge} Amazon EC2 instances. 
We run all the experiments across $K=4$ nodes using SDCA \cite{shalev2013stochastic} as the local solver.  The datasets we used are summarized in Table \ref{tab:datasets}.\footnote{All data are available at \url{https://www.csie.ntu.edu.tw/~cjlin/libsvmtools/datasets/}.}

\begin{table}[ht]
 \caption{Datasets used in the numerical experiments.}
\label{tab:datasets}  
 \centering
{
      \begin{tabular}{crrr}
      \hline
    Dataset & \multicolumn{1}{c}{$n$} &
    \multicolumn{1}{c}{$d$} & 
    \multicolumn{1}{c}{size(GB)} \\
    \hline 
        url & 2,396,130 & 3,231,961 & 2.21\\
  rcv1.test & 677,399 & 47,236 & 1.21 \\
    % news20 & 19,996 & 1,355,191 & 0.13 \\
    covtype  & 581,012 &   54 & 0.07 \\ 
    epsilon  & 400,000 & 2,000 & 3.6 \\ 
     \hline  
      \end{tabular}
}    
 
\end{table}

%for the task of training a hinge-loss SVM as stated in~\eqref{eq:ERM}, where $\ell_i(v) := \max\{0,1-bv\}$ for $b\in\pm1$ is the label of data point~$i$.  %, and Lasso models (sparse linear least squares) in formulation \eqref{eq:L1}.

We first compare \textsc{CoCoA+} versus \textsc{AccCoCoA+} for solving hinge-loss SVM problems of the form \eqref{eq:ERM}.  For both algorithms, the local solver, SDCA, is run for $H=5\times10^5$ iterations (closed-form single coordinate solutions).  In Figure~\ref{fig:f1}, we compare the evolution of the duality gap with respect to the number of iterations and elapsed time.  The results suggest a benefit of acceleration in terms of decreasing the duality gap, both when $\gamma = 1/K$ and when $\gamma = 1$.  In Figure~\ref{fig:f2},
we show how the regularization parameter $\lambda$ can affect the performance of both algorithms when solving the problem with the $\texttt{url}$ dataset.  In particular, the experiments suggest that as the value of $\lambda$ becomes smaller, there will be a more significant benefit from employing the accelerated algorithm.%  The results indicate that such acceleration scheme tends to be more effective towards the less smooth problems.

\begin{figure}[ht]
\centering
  \includegraphics[width=4cm]{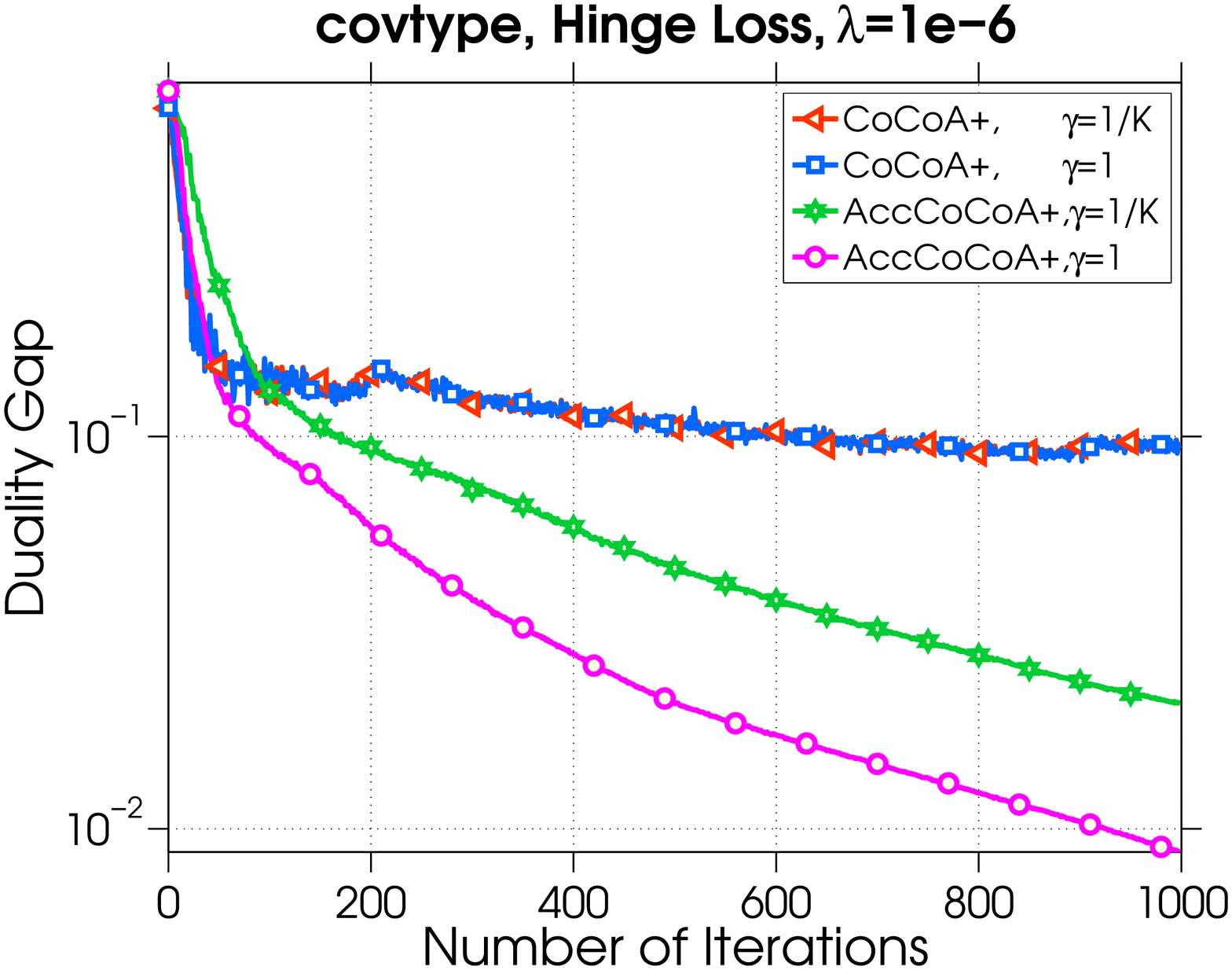}
  \includegraphics[width=4cm]{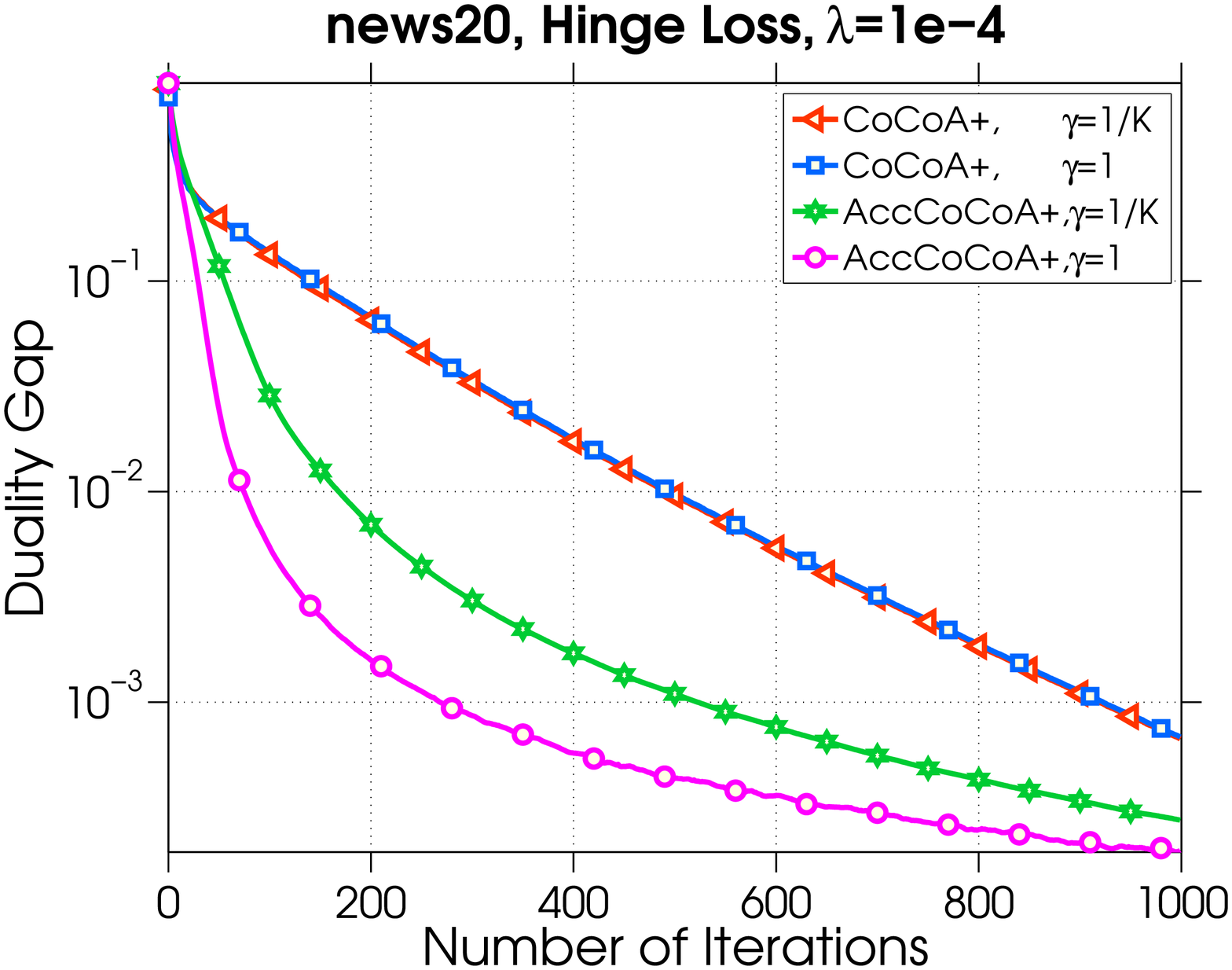}
  \includegraphics[width=4cm]{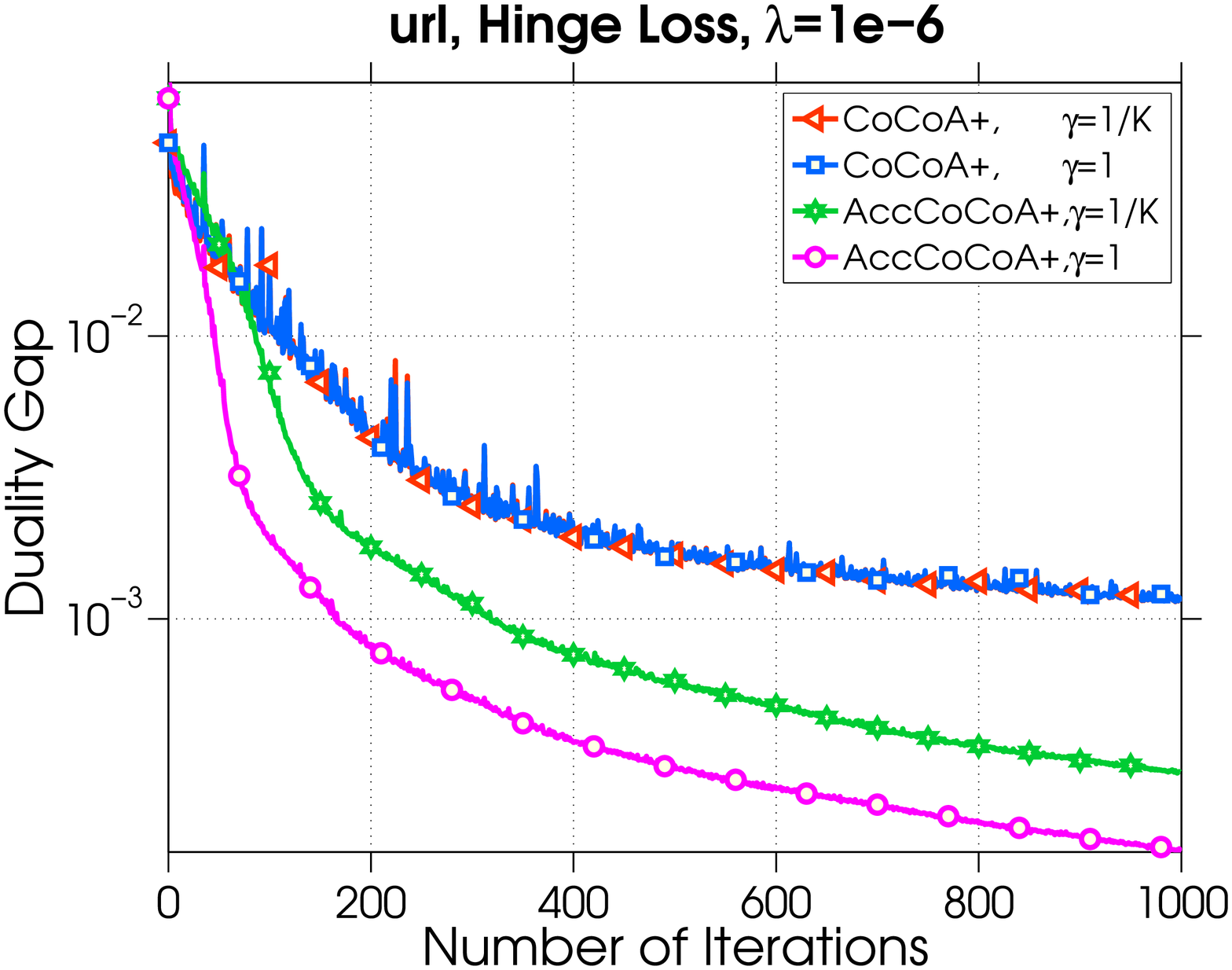}

  \includegraphics[width=4cm]{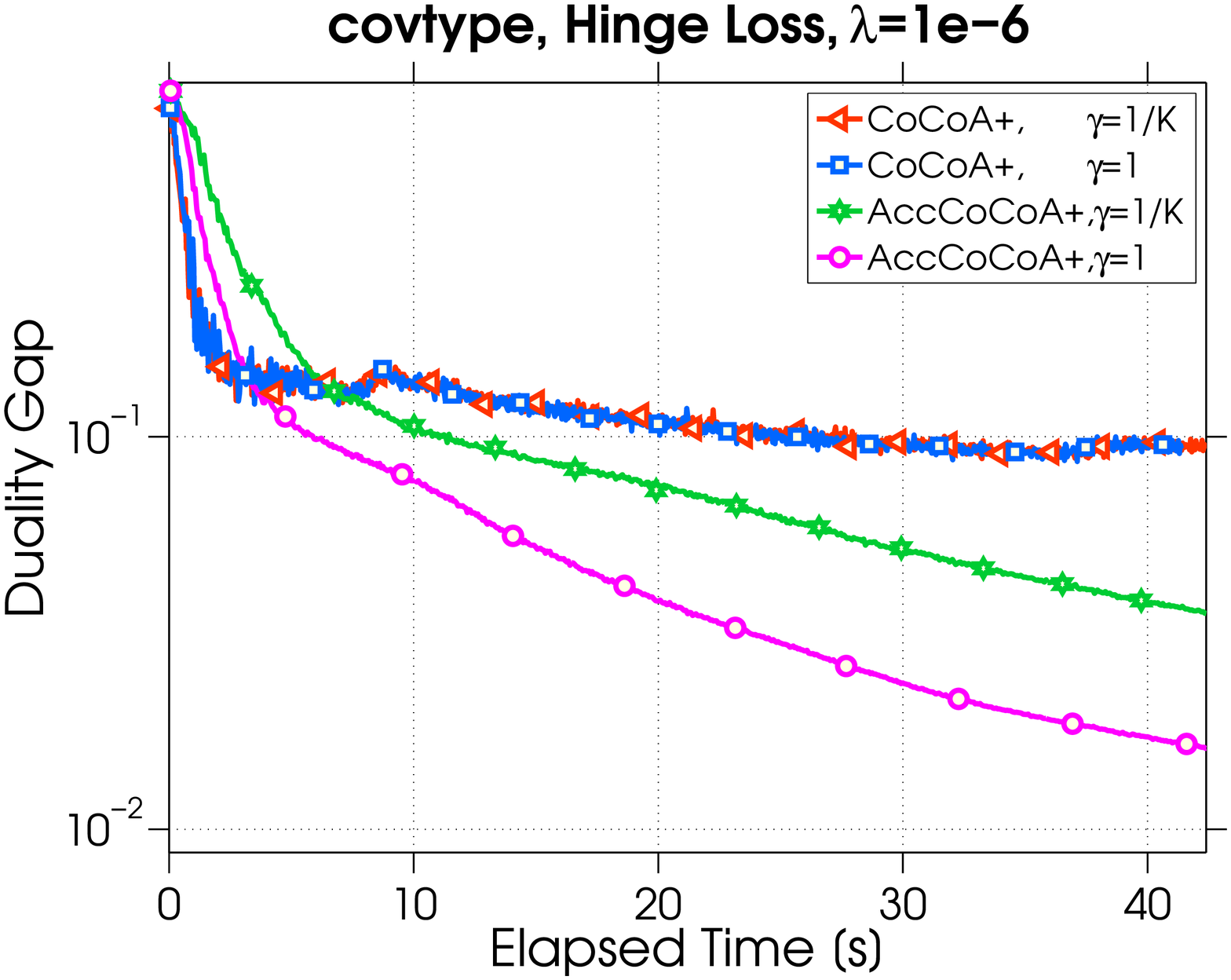}
  \includegraphics[width=4cm]{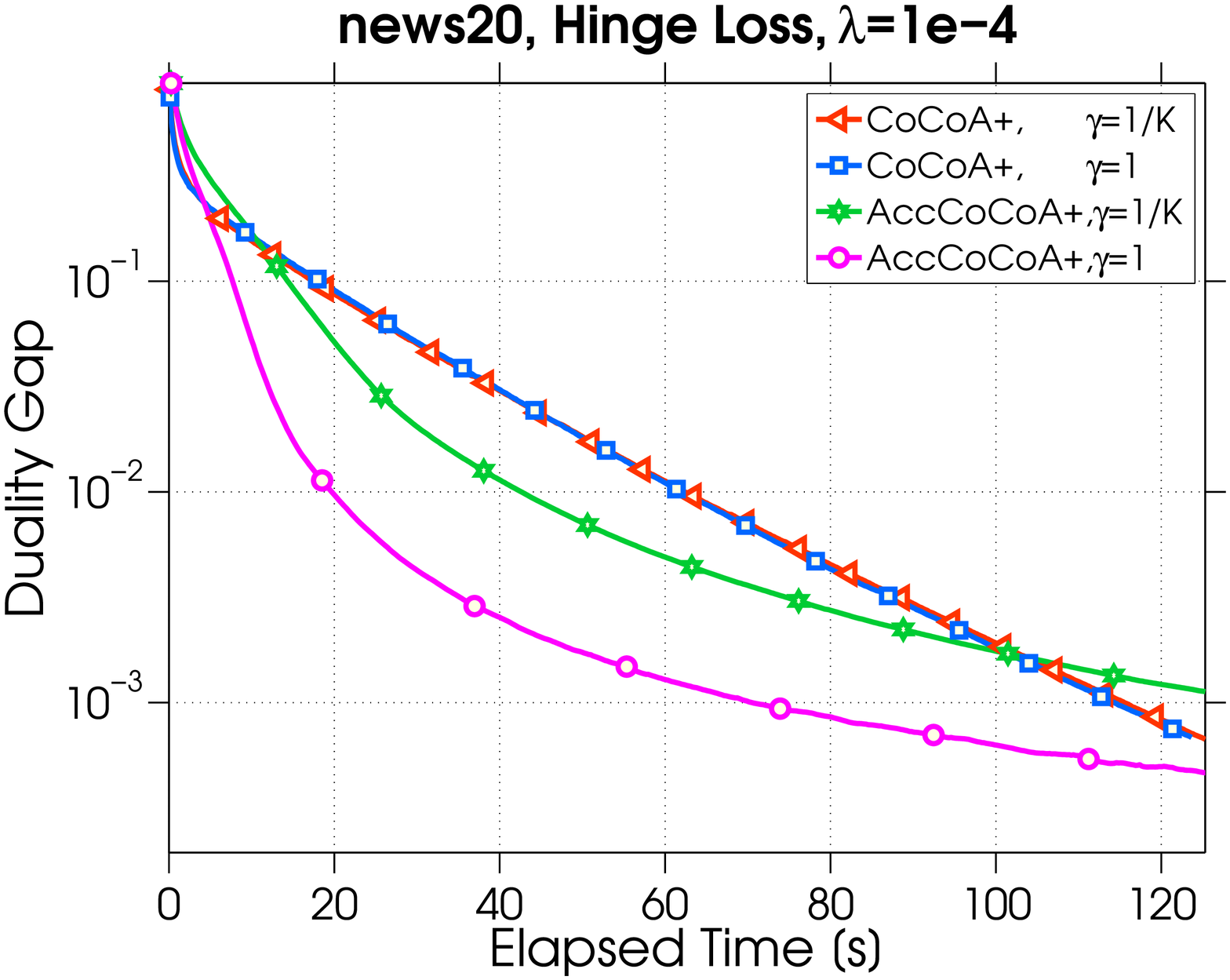}
  \includegraphics[width=4cm]{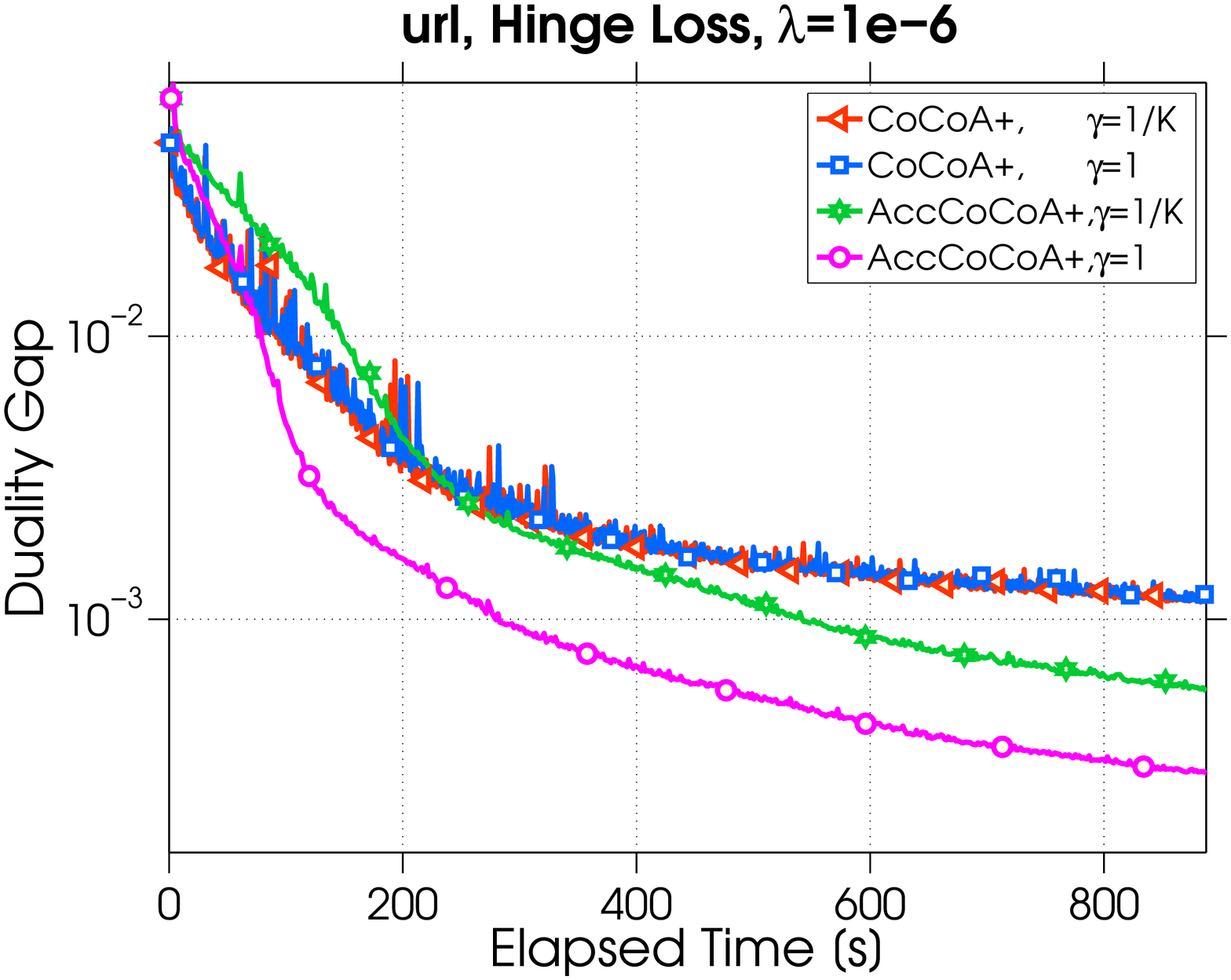}
\caption{Duality gap as a function of iterations (top row) and elapsed time (bottom row) when solving hinge-loss SVM problems.}
\label{fig:f1}
$\ $\\
 \includegraphics[width=4cm]{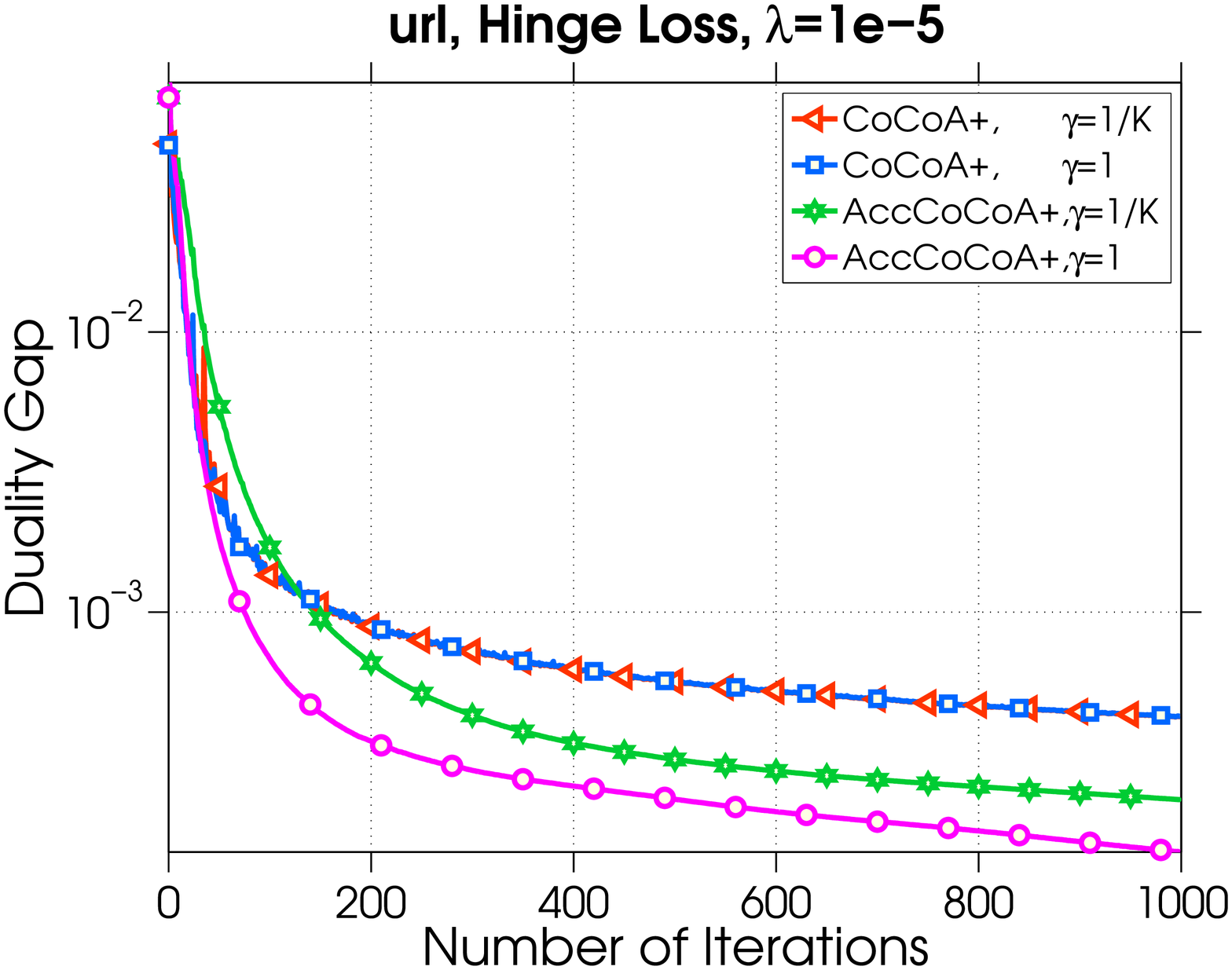}
  \includegraphics[width=4cm]{url_1e-6_1.eps}
  \includegraphics[width=4cm]{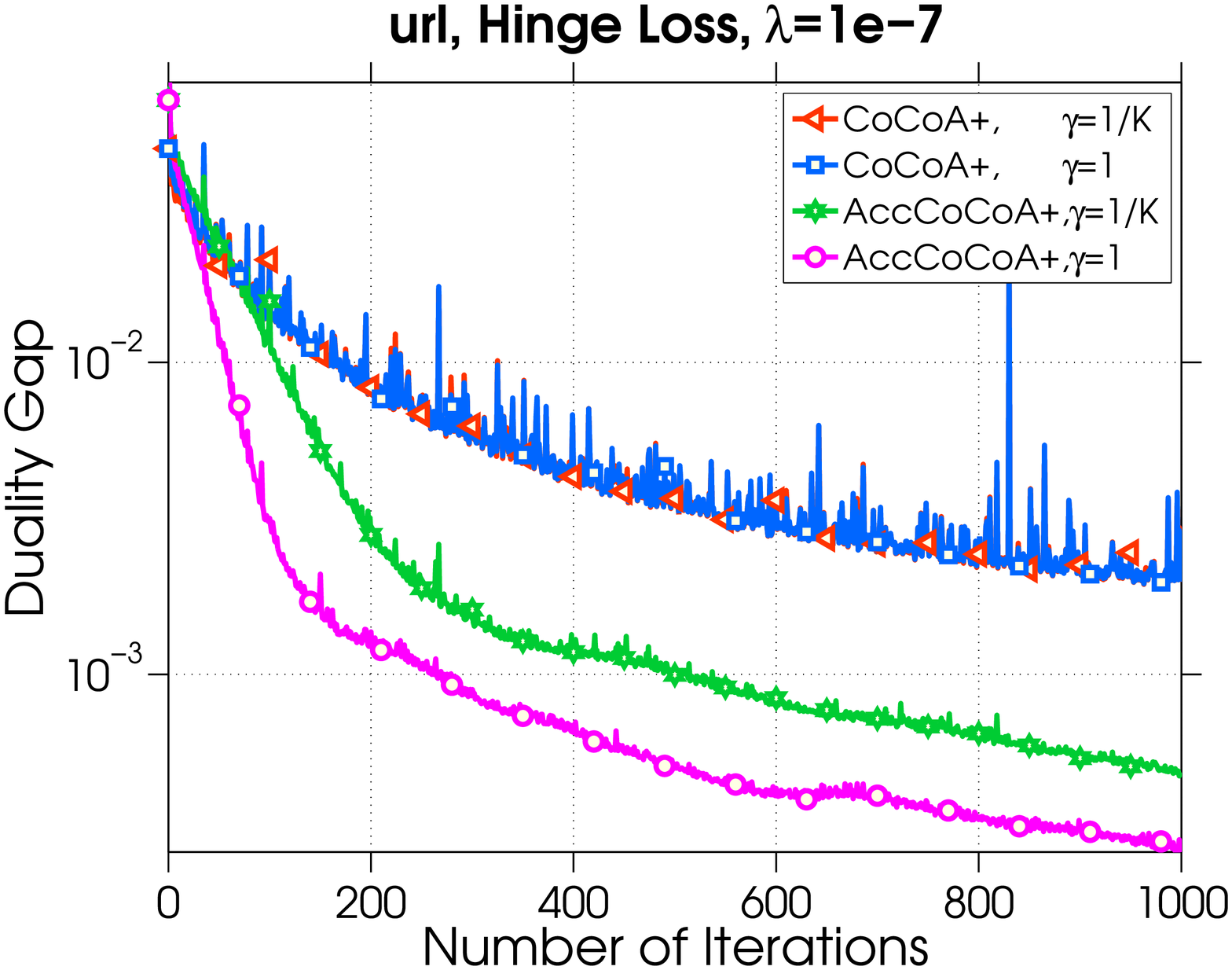}

  \includegraphics[width=4cm]{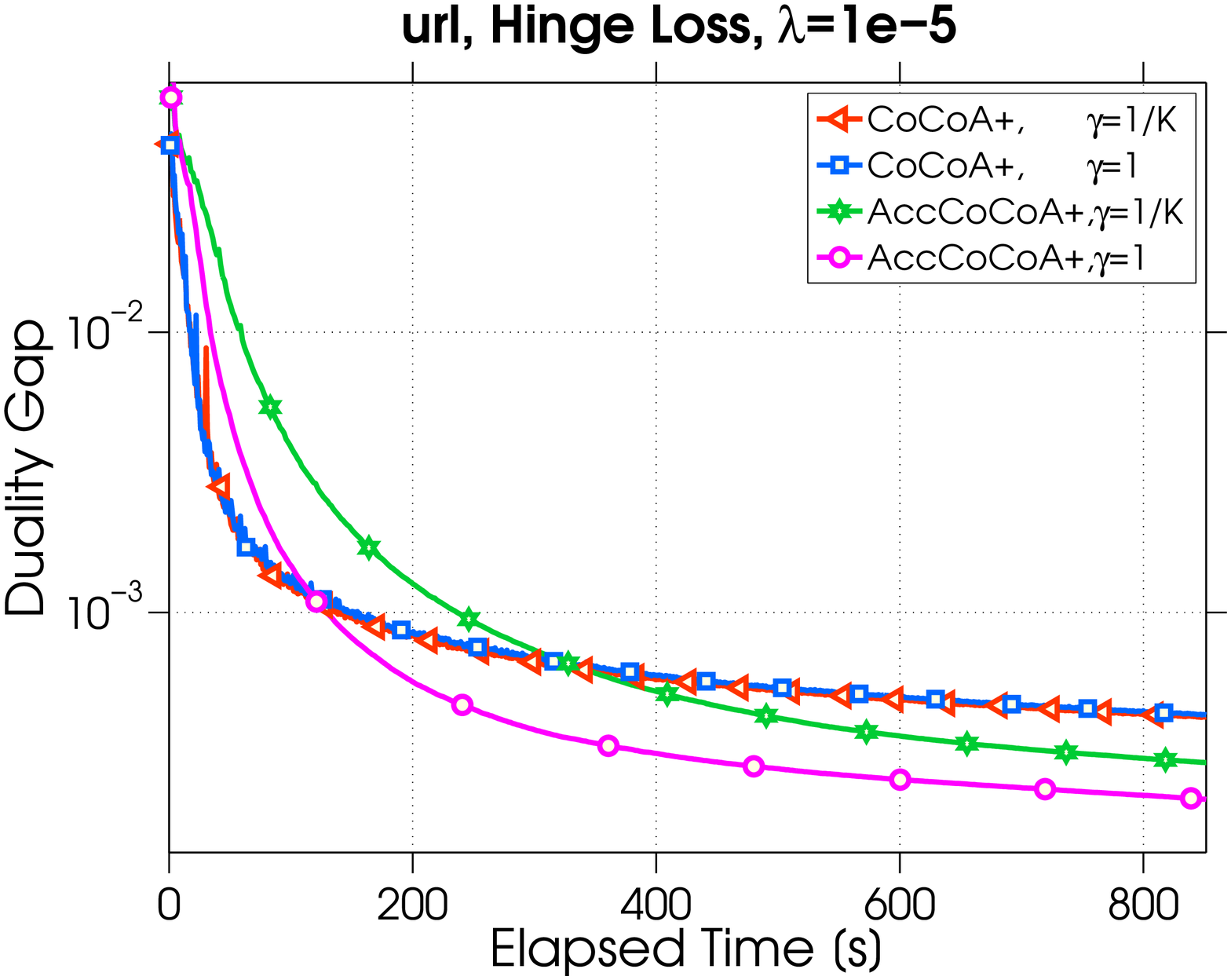}
  \includegraphics[width=4cm]{url_1e-6_2.eps}
  \includegraphics[width=4cm]{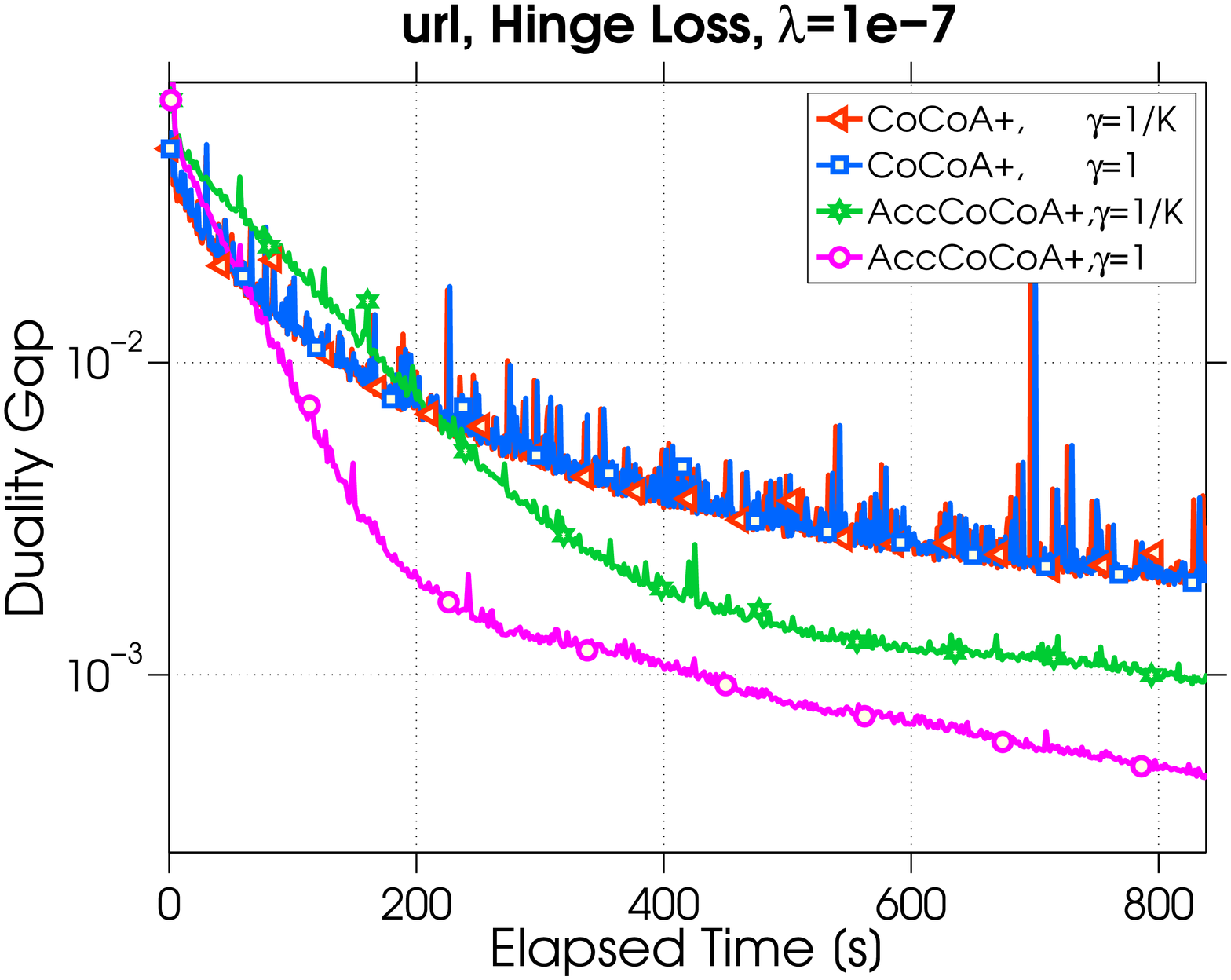}
\caption{Duality gap as a function of iterations (top row) and elapsed time (bottom row) when solving hinge-loss SVM problems with different regularization values ($\lambda$) on the \texttt{url} dataset. }
\label{fig:f2}
\end{figure}

Figure~\ref{fig:lasso1} shows analogous results when the algorithms are employed to solve the Lasso problem in~\eqref{eq:L1}, for which increasing the regularization parameter $\lambda_1$ typically leads to more sparsity of the solution vector.  The results indicate that the accelerated algorithm offers faster convergence of the sub-optimality gap to zero, especially for small values of $\lambda_1$.
% \footnote{Figure~\ref{fig:lasso1} refers to the ``sparsity'' of the solution vectors.  Are you sure you don't mean density?}

\begin{figure}[ht] 
\centering
  \includegraphics[width=4.2cm]{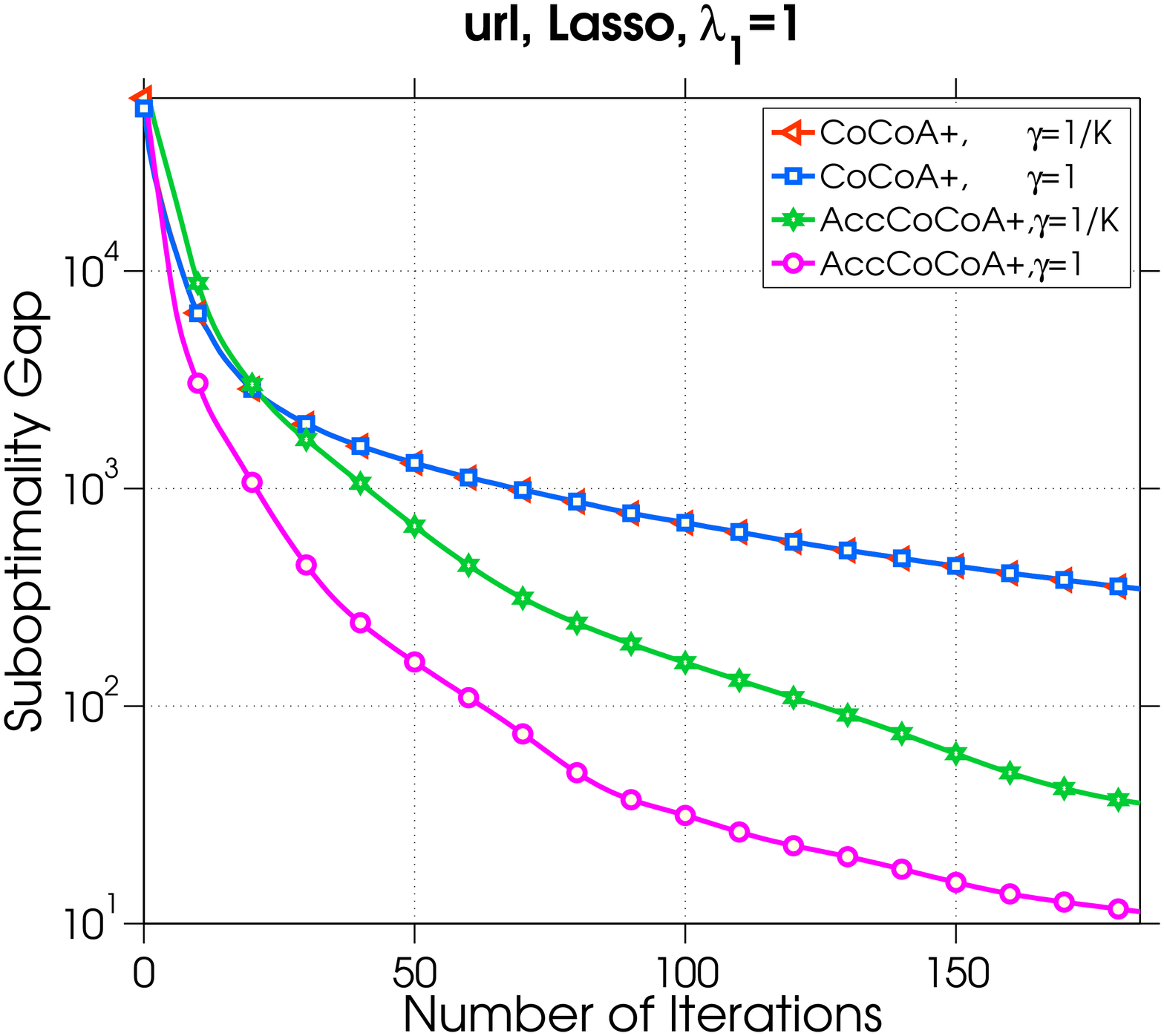}
  \includegraphics[width=4.2cm]{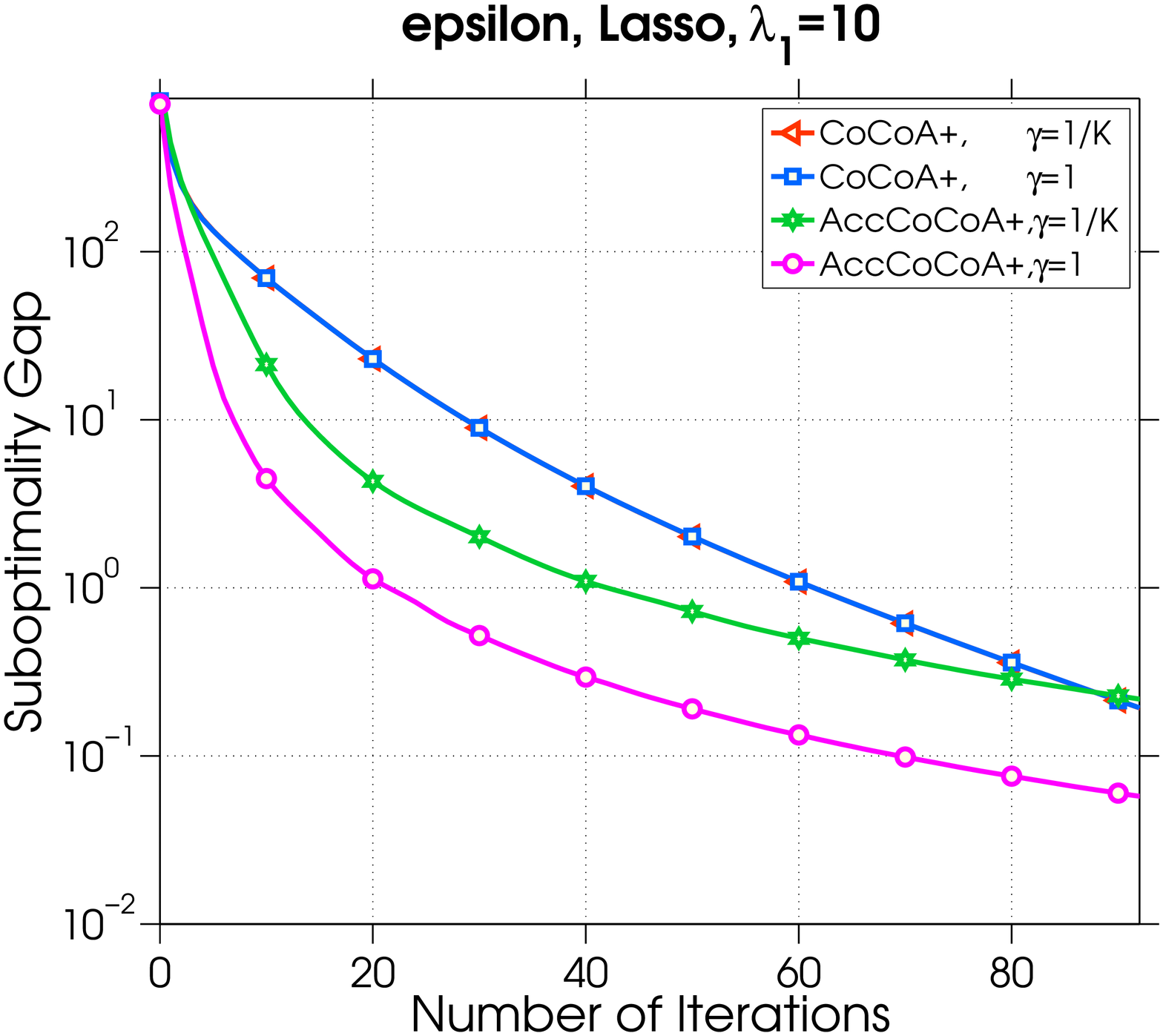}

  \includegraphics[width=4.2cm]{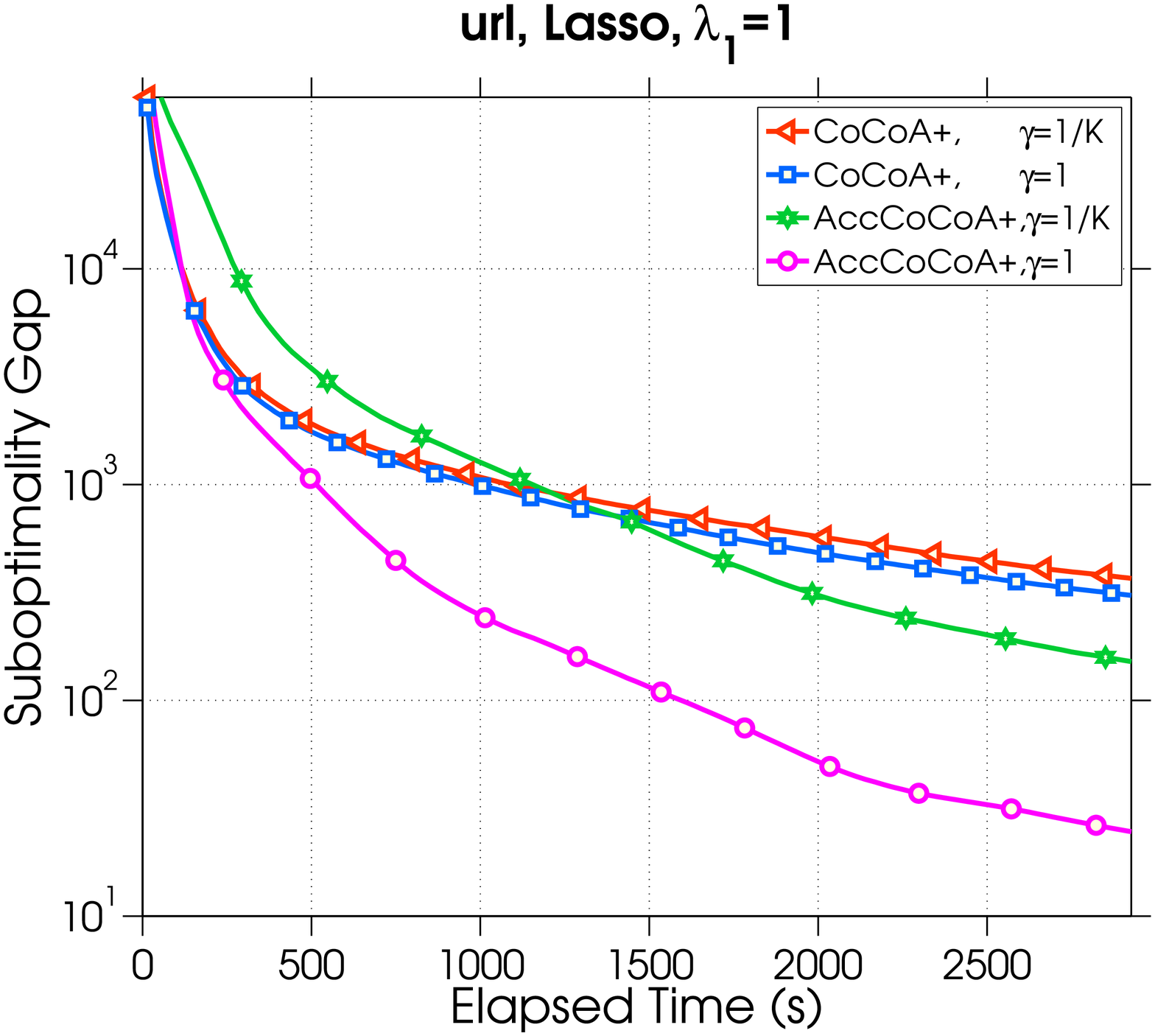}
  \includegraphics[width=4.2cm]{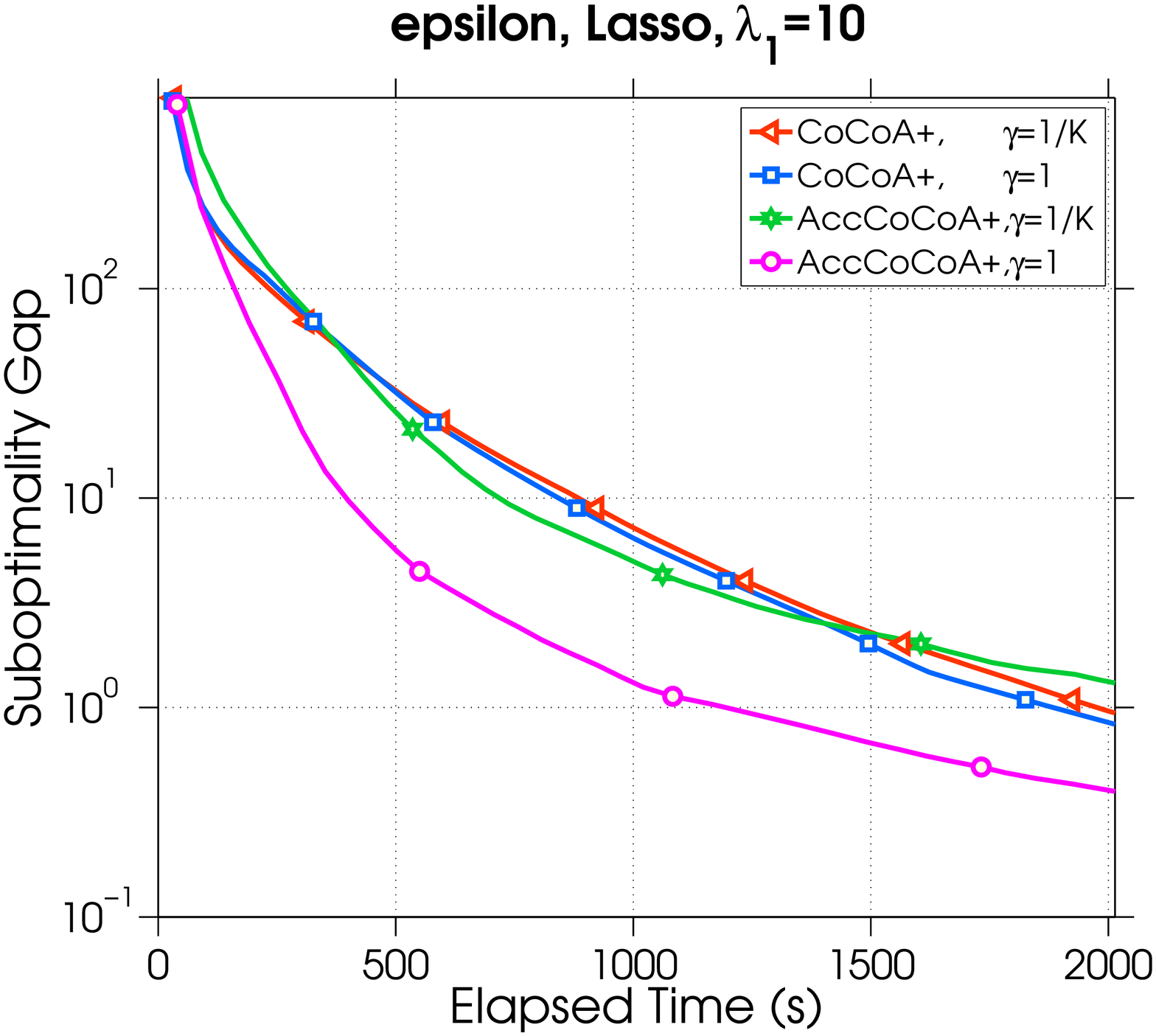}
  \caption{Sub-optimality gap as a function of iterations (top row) and elapsed time (bottom row) when solving Lasso problems.  The choice of the regularizer value $(\lambda_1)$ are such that for the \textit{url} dataset the density of the optimal solution is $5.3\%$, while for the \textit{epsilon} dataset the density is $13.56\%$.}
  \label{fig:lasso1}
\end{figure}

We also ran experiments to demonstrate how the performance of \textsc{AccCoCoA+} depends on the number of iterations ($H$) that SDCA runs for solving each subproblem. Figure~\ref{fig:50} shows that for larger $H$ the subproblems will be solved more accurately, and thus fewer outer iterations can be expected to reach a desired tolerance on the duality gap.  However, in terms of running time, it is not always better to choice larger $H$. For example, for the \texttt{covtype} dataset, choosing $H=10^4$ results in less time to reach a tolerance of $10^{-2}$ than is needed when $H=10^3$ or $H=10^5$.  However, for the $\texttt{news20}$ dataset, the time required decreases with $H$ for all values considered in our experiments.  The reason that this occurs is that $d$ is quite large in this dataset, which makes each round of communication quite time consuming.  Therefore, by solving the subproblems more accurately (by running more iterations), one achieves a better balance between communication and computation.

Such a tradeoff between communication and computation can also be observed in Figure~\ref{fig:50XX}, where we compare how the values of $H$ can affect the convergence of the duality gap for the \texttt{covtype} and \texttt{rcv\_test} datasets. The left two plots illustrate that to reach the same tolerance on the duality gap, the number of iterations can always be reduced by reaching a more accurate solution for each subproblem.  However, the curves in the other two plots indicate that there is always a best value of $H$ that leads to fastest convergence with respect to running time.

\begin{figure}[ht]
\centering
  \includegraphics[width=4.5cm]{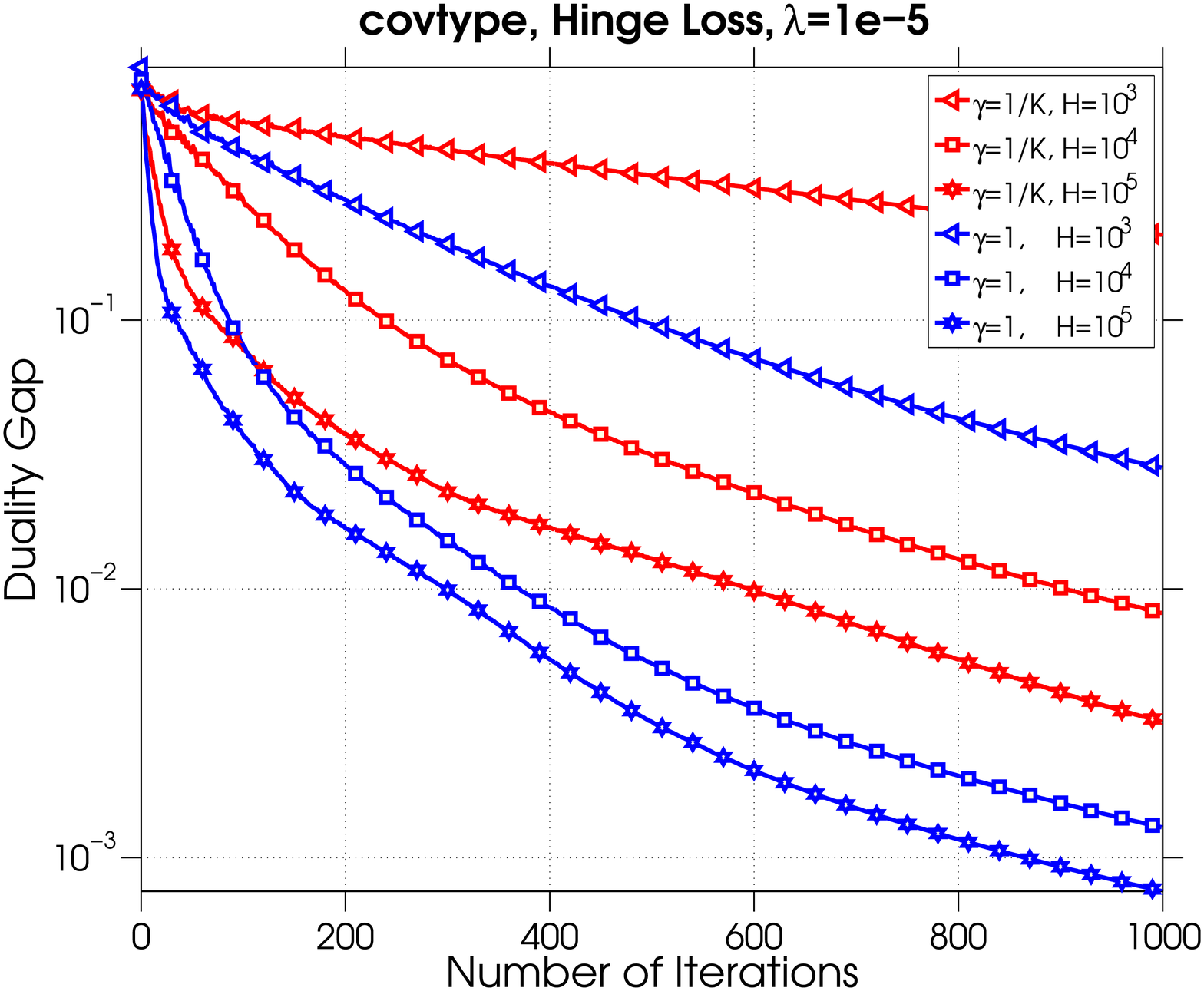}
  \includegraphics[width=4.5cm]{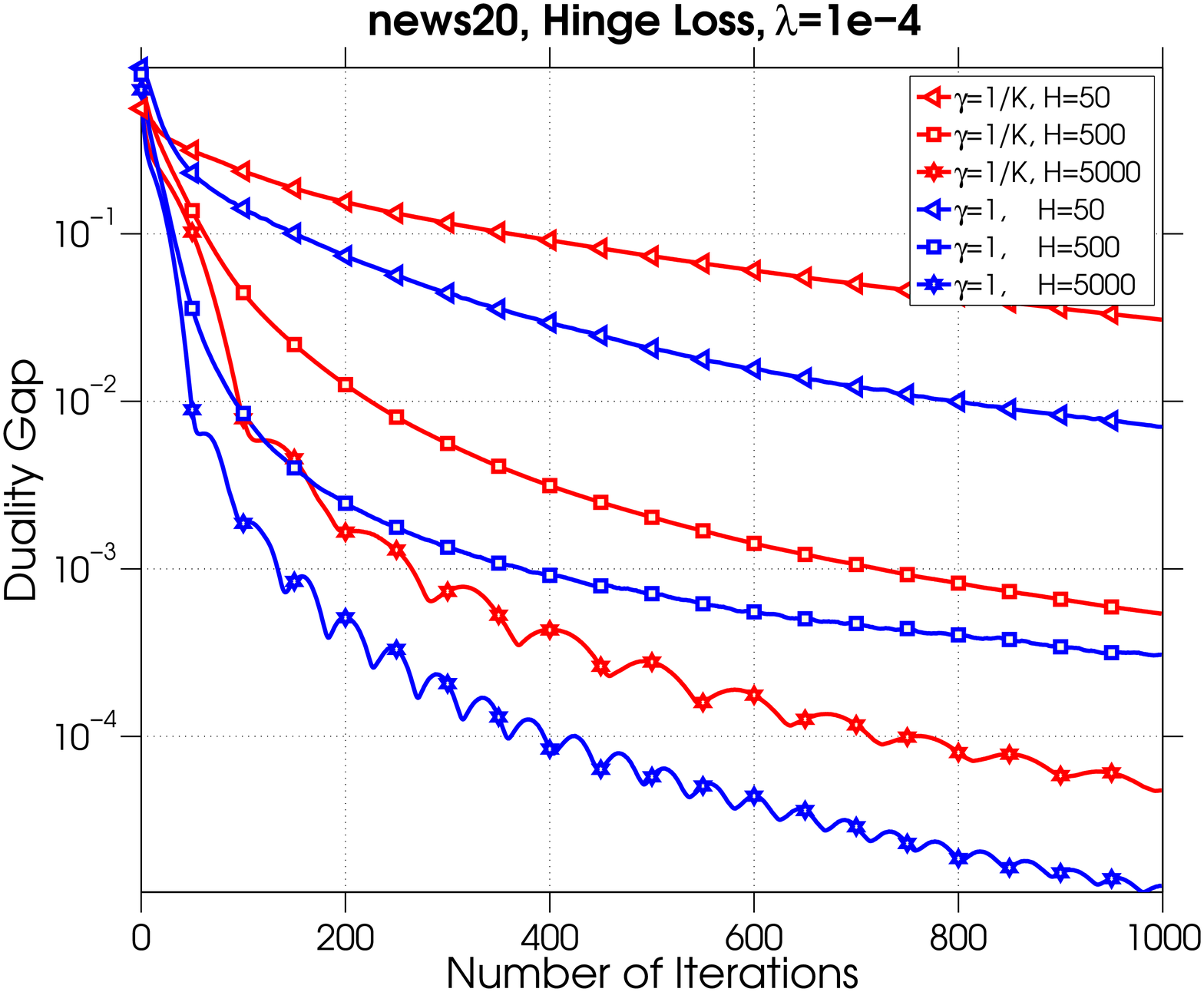}

  \includegraphics[width=4.5cm]{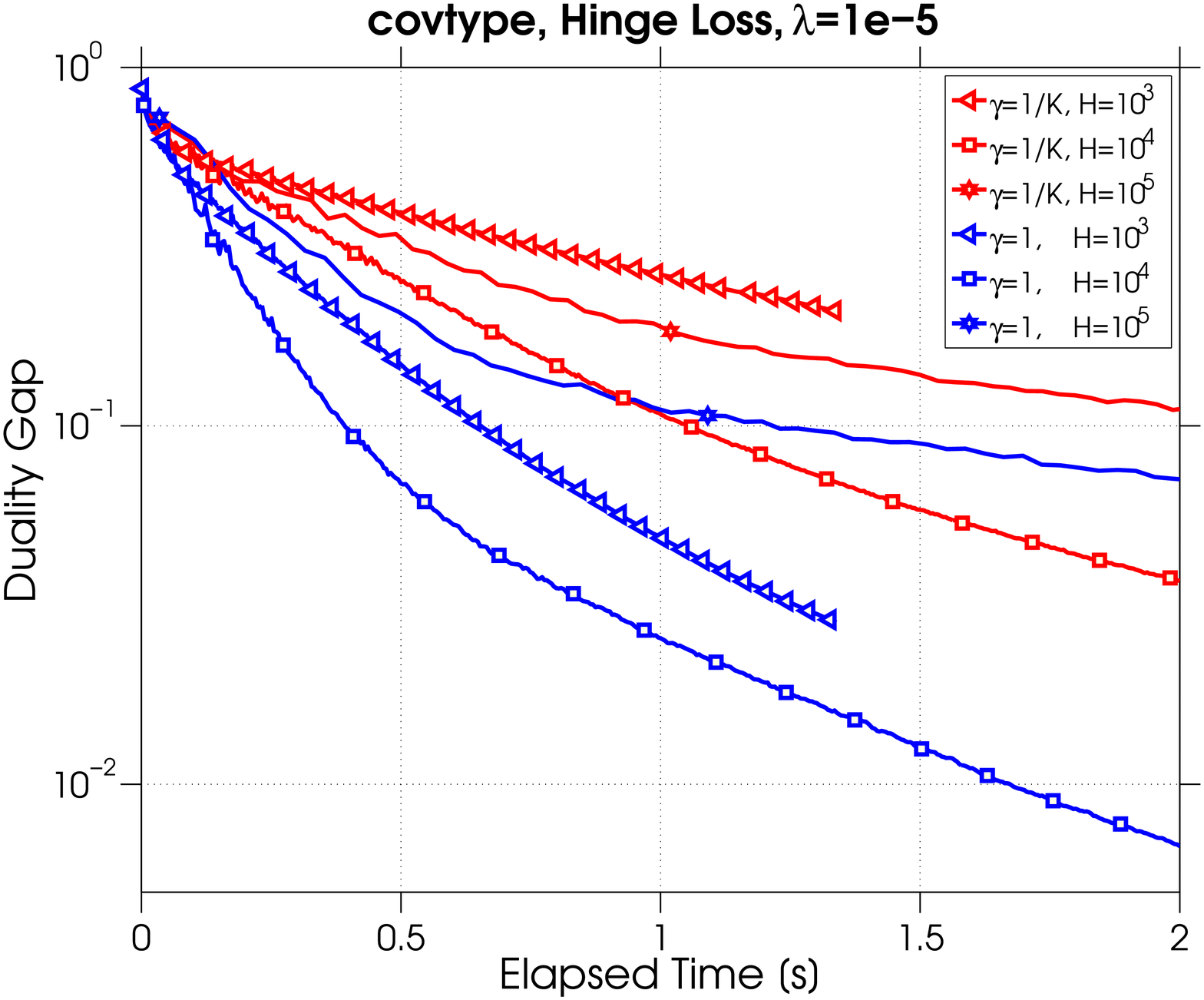}
  \includegraphics[width=4.5cm]{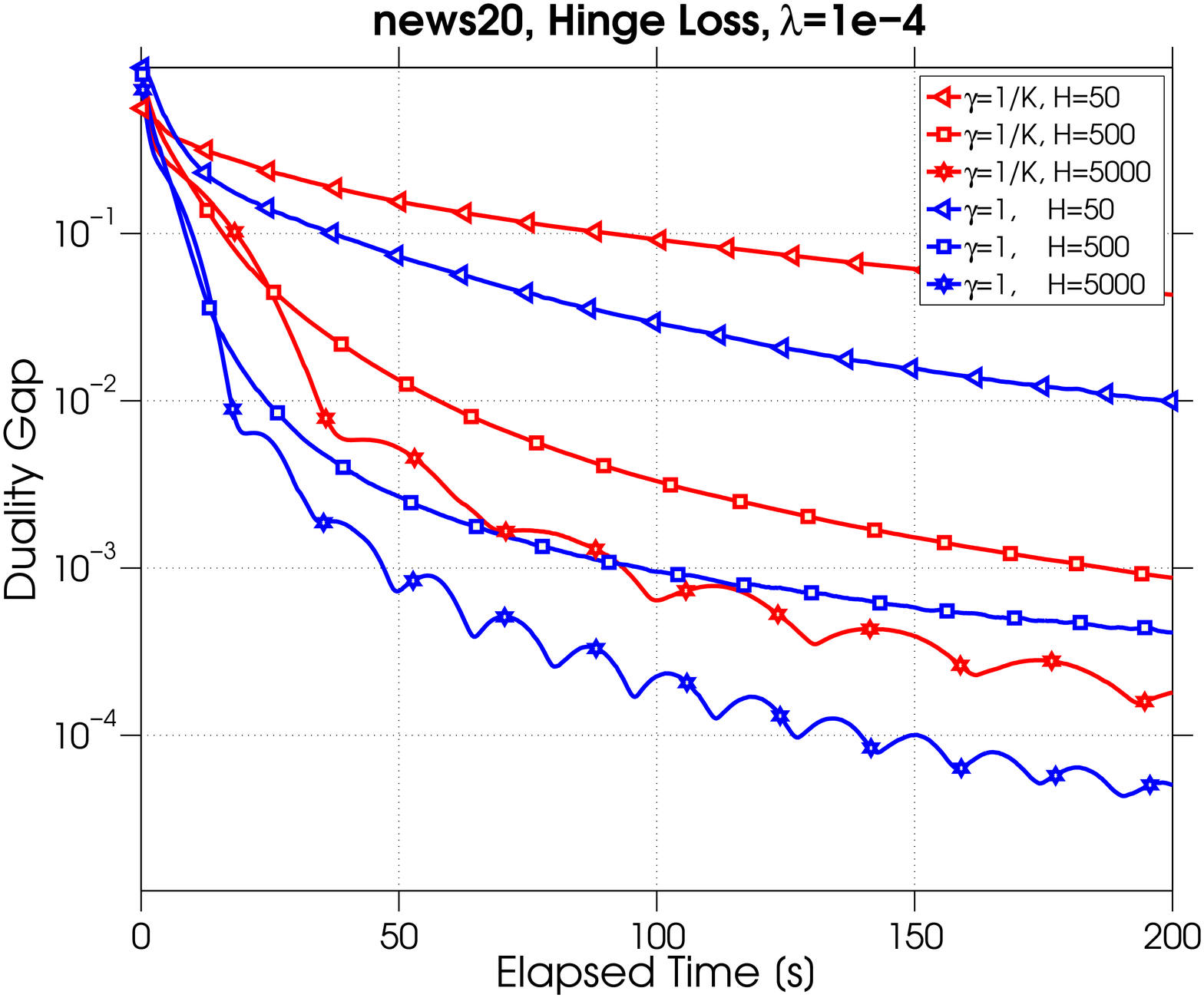}
\caption{Comparison of performance for different inner iteration limits when $\sigma' = \frac{1}{K}$ and $\sigma'=1$.}
$\ $\\
\label{fig:50}
  \includegraphics[width=4.5cm]{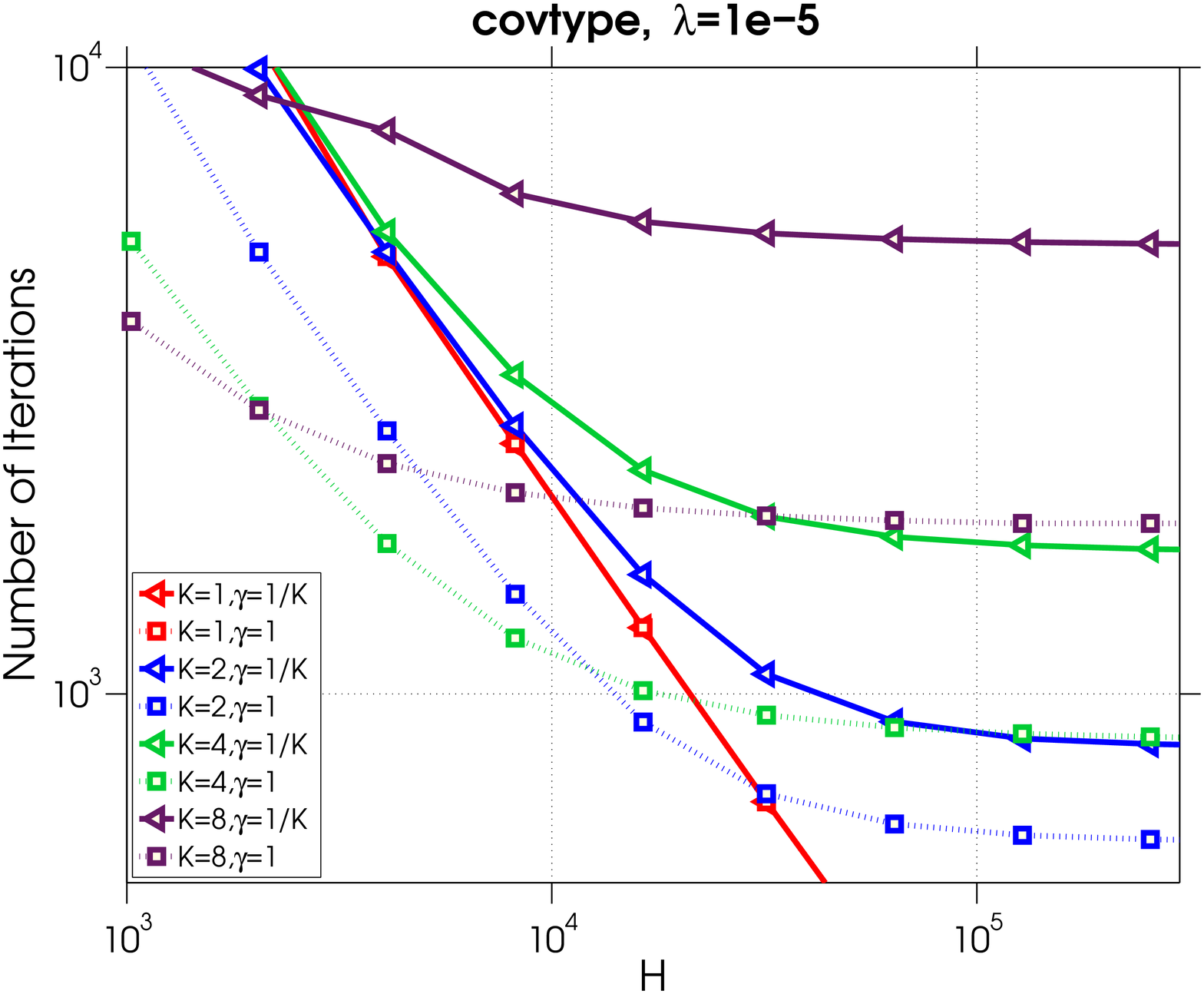}  
  \includegraphics[width=4.5cm]{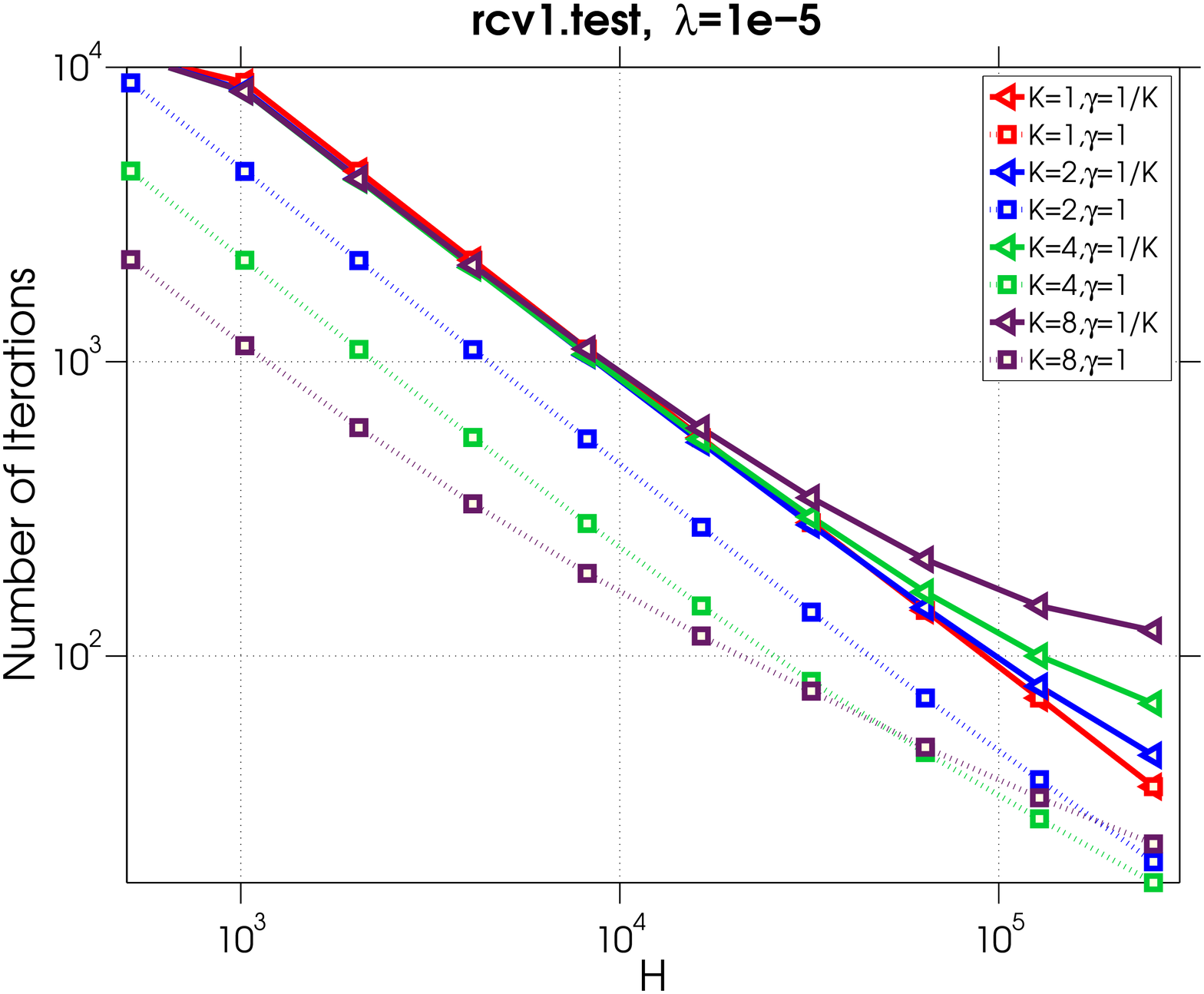}

  \includegraphics[width=4.5cm]{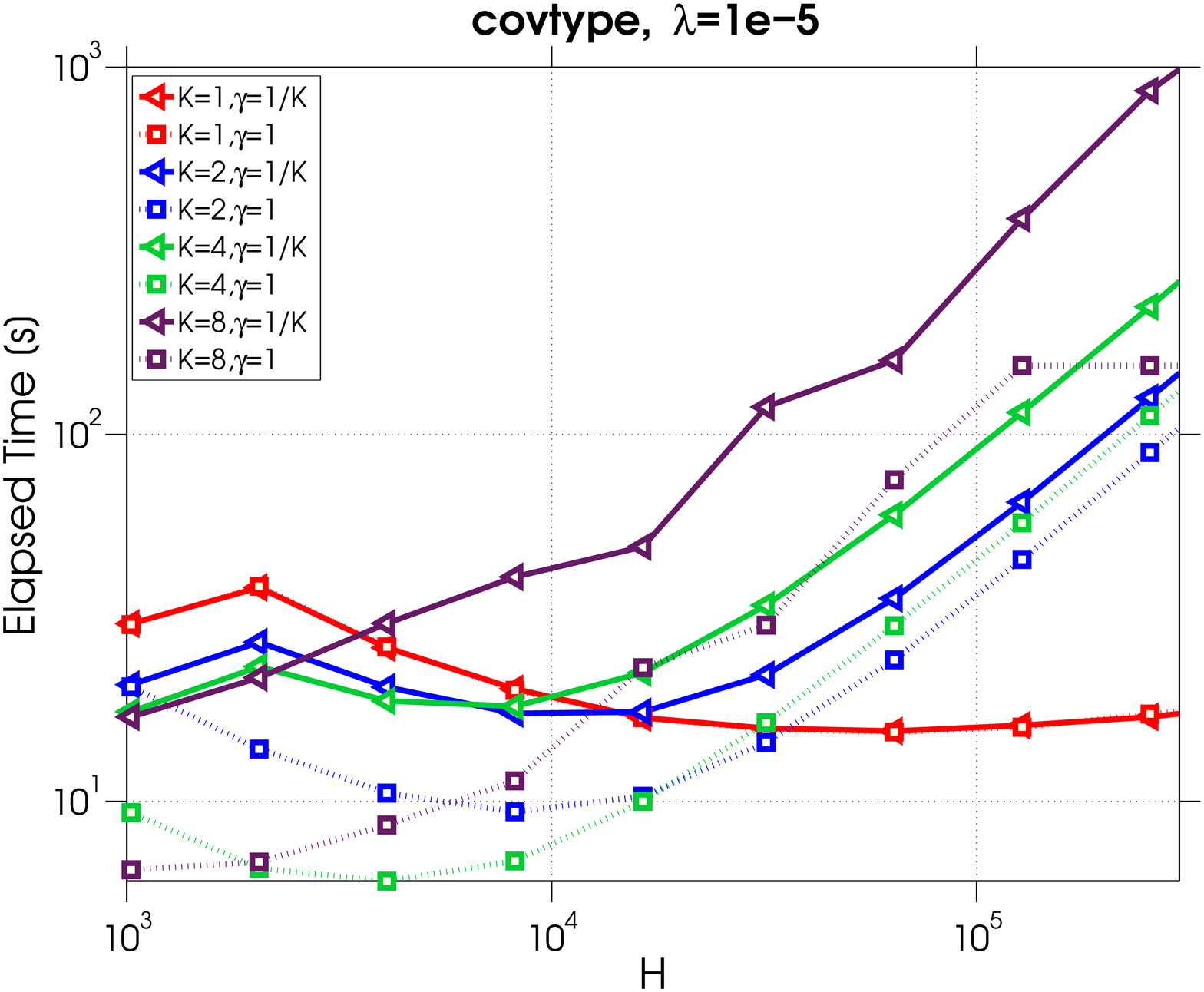}
  \includegraphics[width=4.5cm]{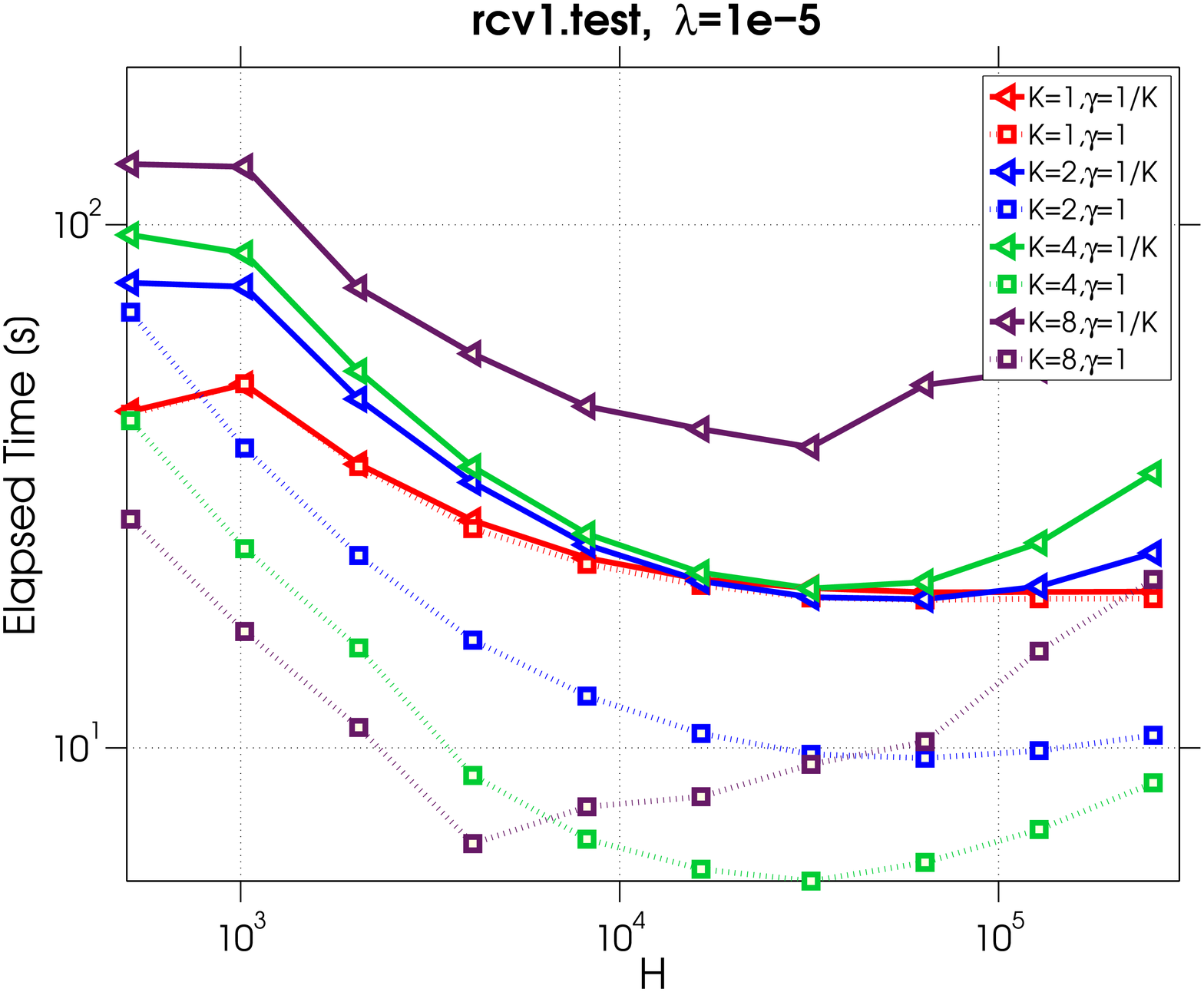} 

\caption{\small Number of iterations and running time required to reach a tolerance of $10^{-3}$ on the duality gap as the inner iteration limit ($H$) is varied.}
\label{fig:50XX}
\end{figure}

% \subsection{Scaling with $K$}
Our last experiment shows how \textsc{AccCoCoA+} scales with the number of machines~($K$).  The results are shown in Figure~\ref{fig:f3}. We set $H=\frac{n}{K}$ for every $K$ to make sure that the same amount of data is utilized during each iteration of \textsc{AccCoCoA+} across all $K$ machines. By doing so, it is also expected that each subproblem has been solved to similar accuracy, according to the complexity result of SDCA in \cite{shalev2013stochastic}. The results show that when $\gamma=1$, \textsc{AccCoCoA+} takes almost the same amount of time regardless of how many machines are used, which demonstrates the better scaling properties it has than the $\gamma=\frac1K$ case.

\begin{figure}[ht]
\centering
  \includegraphics[width=4.5cm]{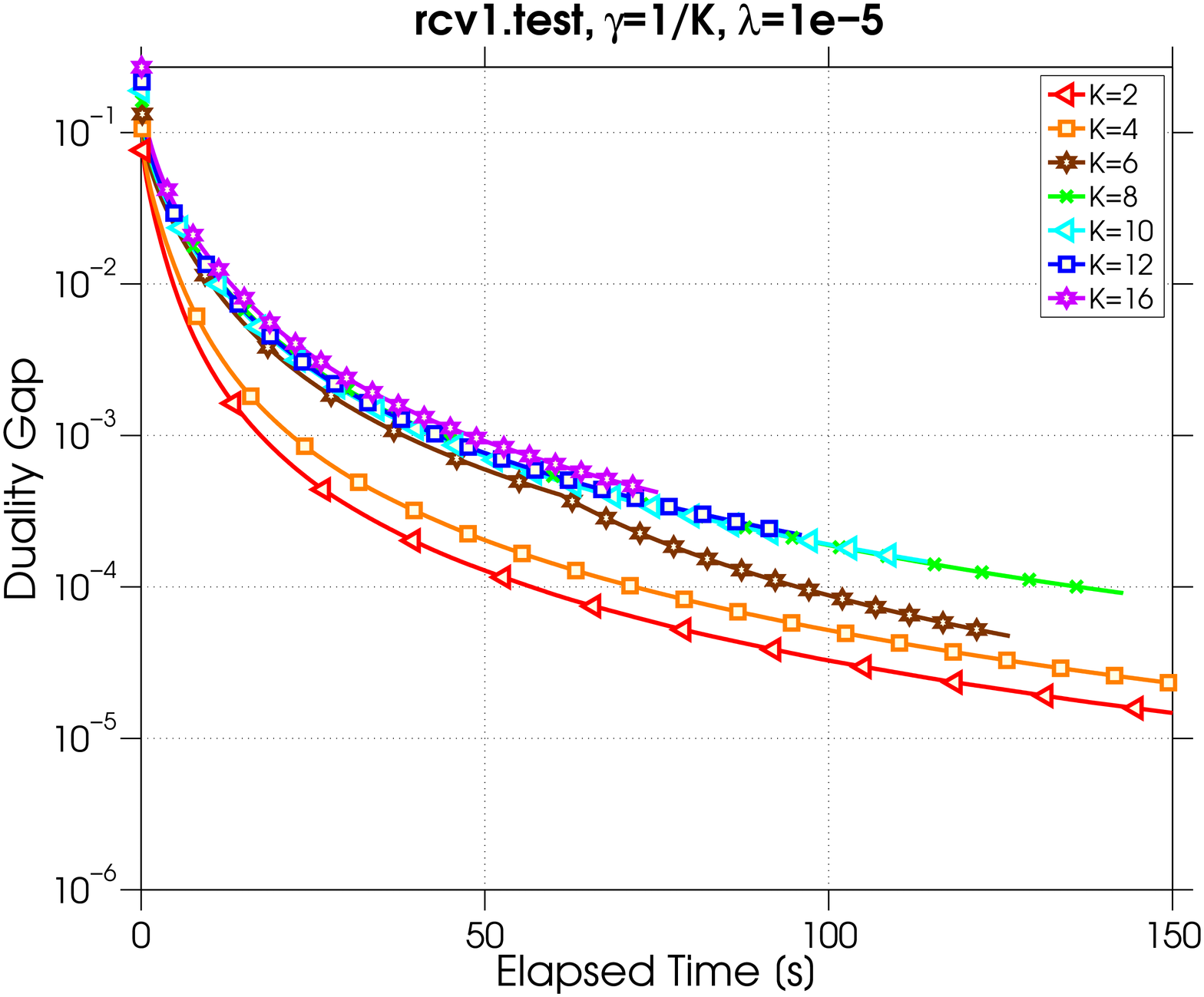}
  \includegraphics[width=4.5cm]{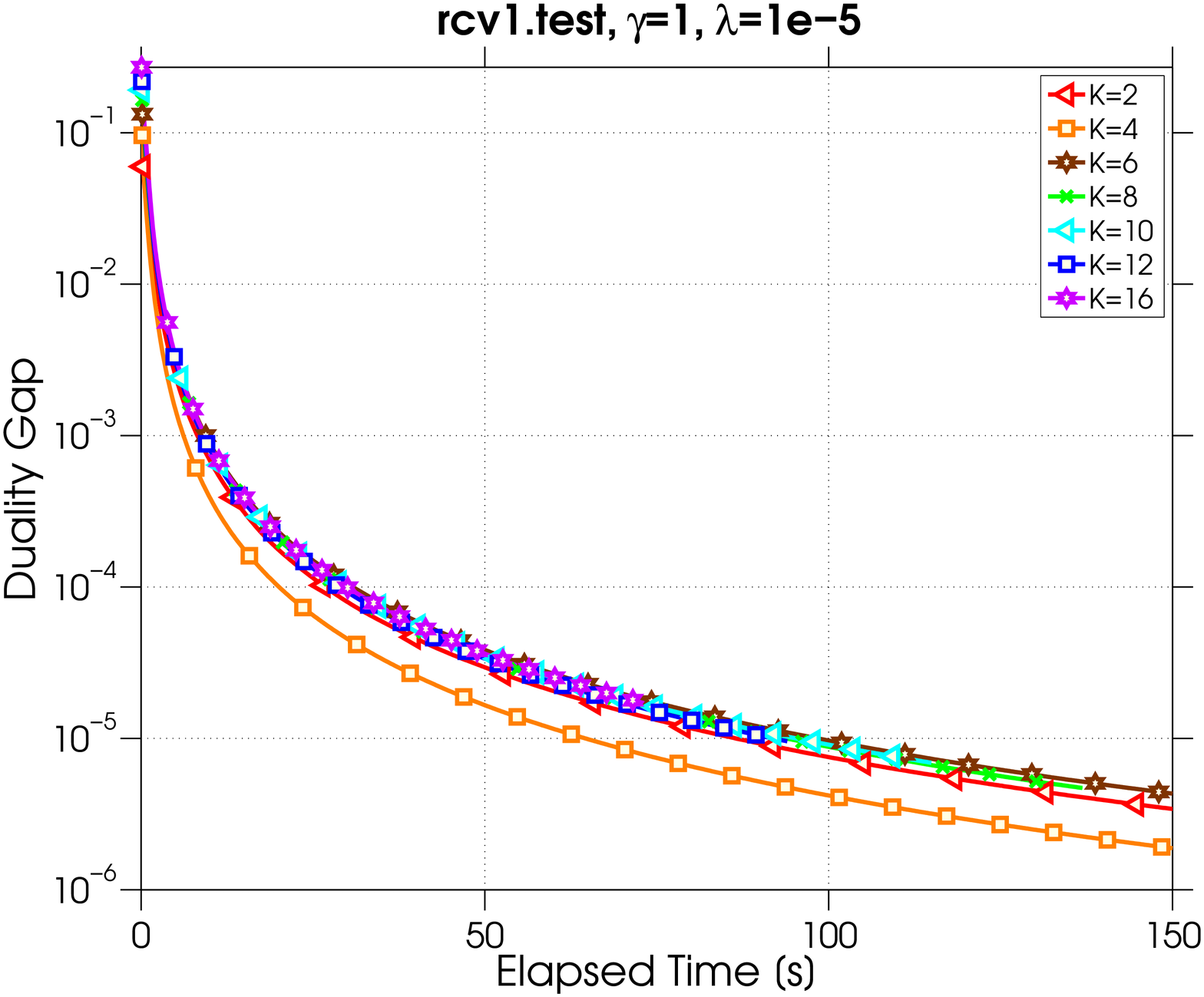}
  \caption{Running times on different numbers of machines $K$.}\label{fig:f3}
\end{figure}

\section{Conclusion} 
We proposed and analyzed \textsc{AccCoCoA+}, an accelerated variant of \textsc{CoCoA}+ achieving the optimal $\mathcal{O}(1/t^2)$ convergence rate. The method is robust to inaccurate subproblems both in theory and practice, and performs well in large-scale experiments. 
  Our analysis provides constants in the convergence rate which are significantly tighter compared to those previously obtained for \textsc{CoCoA+}.

{

\bibliographystyle{plain}
\bibliography{citations.bib}
}

\end{document}